\newenvironment{customthm}[1]
  {\innercustomthm}
  {\endinnercustomthm}
\newcommand{\R}[0]{\mathbb{R}}
\newcommand{\Z}[0]{\mathbb{Z}}
\renewcommand{\t}[1]{\textup{#1}}
\newtheorem{thm}{Theorem}[section]
\newtheorem{prop}[thm]{Proposition}
\newtheorem{lem}[thm]{Lemma}
\newtheorem*{defn*}{Definition}
\newtheorem{defn}[thm]{Definition}
\newtheorem{rem}{Remark}
\let\emptyset\varnothing
\numberwithin{equation}{section}
\patchcmd{\@settitle}{\uppercasenonmath\@title}{}{}{}
\patchcmd{\@setauthors}{\MakeUppercase}{}{}{}
\patchcmd{\section}{\scshape}{}{}{}
\title[Bounds of restriction of characters to submanifolds]{Bounds of restriction of characters to submanifolds}
\author[Y. Zhang]{Yunfeng Zhang\footnote{Address: Department of Mathematical Sciences, University of Cincinnati, Cincinnati, OH 45221-0025\\ Email: yunfengzhang108@gmail.com}}
\subjclass[2020]{22E30, 35P20, 58J50} 
\keywords{Laplace--Beltrami eigenfunction, character, matrix coefficient, restriction to submanifolds, peeling the root system}
\begin{document}

\onehalfspacing

\begin{abstract}
A fruitful approach to studying the concentration of Laplace--Beltrami eigenfunctions on a compact manifold, as the eigenvalue tends to infinity, is to bound their restriction to submanifolds. In this paper, we adopt this approach in the setting of compact Lie groups and provide sharp restriction bounds for general Laplace--Beltrami eigenfunctions, as well as for important special cases such as sums of matrix coefficients and, in particular, characters of irreducible representations.

We prove sharp 
asymptotic $L^p$ bounds for the restriction of general Laplace--Beltrami eigenfunctions to maximal flats and all of their submanifolds, for all $p \geq 2$. Furthermore, we establish sharp asymptotic $L^p$ bounds for the restriction of characters to maximal tori and all of their submanifolds for all $p>0$, and to torus-generated conjugation-invariant submanifolds for all $p \geq 2$. We also obtain sharp $L^p$ bounds for the restriction of general sums of matrix coefficients to maximal flats and all of their submanifolds, for all $p \geq 2$.

\end{abstract}

\maketitle

\tableofcontents

\section{Introduction}

In this paper, we continue the study of concentration of general Laplace--Beltrami eigenfunctions on a compact Lie group as well as important special cases such as sums of matrix coefficients and in particular characters of irreducible representations of the group. Let $U$ denote a compact connected Lie group of dimension $d$ and rank $r$, equipped with the bi-invariant Riemannian metric uniquely determined up to scaling by the Killing form. Throughout, we assume for simplicity that $U$ is simple; the extension of our results to compact semisimple groups is straightforward. 
Let $\Delta$ be the Laplace--Beltrami operator. For an eigenfunction $f$, we write 
\[
  \Delta f = -N^2 f, \qquad N>1,
\]
and state our bounds in terms of the parameter $N$.

Let $L^p(U)$ denote the Lebesgue spaces associated with the Riemannian volume form on $U$. 
A natural way to measure the concentration of Laplace--Beltrami eigenfunctions is via their $L^p$ norms.
In \cite{Zha23} we studied $L^p$ estimates of Laplace--Beltrami eigenfunctions on a compact Lie group. In particular, for $r\geq 4$, we established 
\begin{align*}
\|f\|_{L^p(U)}\leq C_{\varepsilon} N^{\frac{d-2}{2}-\frac{d}{p}+\varepsilon}\|f\|_{L^2(U)}
\end{align*}
for all $p>\frac{2dr}{dr-2(d+r)}$. 
Similar to those eigenfunction bounds on tori as established in \cite{Bou93,BD15}, this serves as a power saving bound over the fundamental $L^p$ bounds of Sogge on a general compact Riemannian manifold \cite{Sog88}.

Initiated by the work \cite{Tat98} of Tataru, 
\cite{Rez04} of Reznikov, and the landmark work \cite{BGT07} of Burq--G\'erard--Tzvetkov, there has appeared a great deal of interesting work on another way of quantifying concentration of eigenfunctions, namely, to establish $L^p$ bounds of restriction of eigenfunctions to submanifolds; see \cite{Sar08, Bou09, Hu09, HT12, BR12, CS14, SZ14, BR15, Che15, Mar15, Mar16, Mar16', Zha17, XZ17, Bla18, Hez18, HZ21, WZ21, Ali22, EP22, Dem23, DL23, Par23, BP24, GMX24, Hou24, HWZ24} for such restriction bounds under various kinds of geometric and arithmetic assumptions on the base manifolds as well as their submanifolds, and we also refer to Chapter 12 of \cite{Zel17} for a recent survey. 
Obtaining $L^p$ estimates for the restriction of eigenfunctions to submanifolds can be more delicate than on the whole manifold, as it requires finer control of their pointwise behavior along the submanifold. 
The following fundamental restriction bounds for general compact manifolds were established in \cite{BGT07,Hu09}.

\begin{customthm}{A}\label{BGT}
Let $M$ be a compact smooth Riemannian manifold of dimension $d$, and let $S$ be a compact smooth submanifold of dimension $k$. Let $L^p(M)$, $L^p(S)$ be the Lebesgue spaces associated to the volume measure on $M$ and $S$ respectively as induced from the Riemannian metric.  
Let $\Delta$ be the Laplace--Beltrami operator on $M$. Then for any eigenfunction $\varphi$ on $M$ such that $\Delta\varphi=-N^2\varphi$, $N>1$, we have 
$$\|\varphi\|_{L^p(S)}\leq C N^{\rho(k,d)}\|\varphi\|_{L^2(M)},$$
where 
\begin{align*}
\rho(d-1,d)&=\left\{\begin{array}{ll}
\frac{d-1}{2}-\frac{d-1}{p}, & \text{ if }\frac{2d}{d-1}\leq p\leq\infty,\\
\frac{d-1}{4}-\frac{d-2}{2p}, & \text{ if }2\leq p\leq \frac{2d}{d-1},
\end{array}\right.\\
\rho(d-2,d)&=\frac{d-1}{2}-\frac{d-2}{p},  \text{ if }2<p\leq \infty, \\
\rho(k,d)&=\frac{d-1}{2}-\frac{k}{p},  \text{ if }1\leq k\leq d-3.
\end{align*}
If $p=2$ and $k=d-2$, we have 
\begin{align}\label{logloss}
\|\varphi\|_{L^2(S)}\leq C N^{\frac{1}{2}}(\log N)^{\frac{1}{2}}\|\varphi\|_{L^2(M)}.
\end{align}
Moreover, all estimates are sharp, except for the log loss\footnote{The log loss in \eqref{logloss} is expected to be eliminated, and this indeed has been done by Chen--Sogge in \cite{CS14} for geodesics lying in a compact manifold of dimension 3, and by Wang--Zhang in \cite{WZ21} for curves with non-vanishing geodesic curvature also in a compact manifold of dimension 3, and for totally geodesic submanifolds of codimension 2 in a compact manifold of any dimension $\geq 3$.  }, if $M$ is picked to be the standard spheres. 
\end{customthm}

The intention of the current paper is to obtain improvement over the general restriction bounds in the above theorem for compact Lie groups. We will consider two classes of submanifolds of the compact Lie group $U$. 
The first class consists of maximal flats in $U$ and all of their submanifolds. A maximal flat in $U$ is defined as a totally geodesic submanifold of
sectional curvature zero of maximal dimension in $U$, or equivalently, a left (or right) translate of any maximal torus of $U$. As a typical example of a submanifold of a maximal flat, any geodesic in $U$ lies in a maximal flat as a one-dimensional submanifold.  
We will first prove the following $L^p$ restriction bounds of characters to such submanifolds. Note that it always holds $$\|\chi\|_{L^2(U)}=1$$ for any character $\chi$. 

\begin{thm}\label{characterrestriction}
Let $\chi$ be the character of an irreducible representation of $U$ 
such that $\Delta \chi=-N^2\chi$, $N>1$. Let $S$ be a compact smooth $k$-dimensional submanifold of a maximal torus $T$ of $U$, $k=0,1,2,\ldots,r$. Let $p_0=0$ and $p_r={2r}/{(d-r)}$. For $k=1,2,\ldots,r-1$, let $p_k$ be as given in Table \ref{criticalexponent}, so that ${(d-r)}/{2}-{k}/{p_k}>0$ and $0<p_k< 1$. Then
$$\|\chi\|_{L^p(S)}\leq C\cdot 
\left\{
\begin{array}{lll}
 N^{\frac{d-r}{2}-\frac{k}{p}}, & \t{ for }p>p_k, 
\\
 N^{\frac{d-r}{2}-\frac{k}{p_k}}(\log N )^{\frac{1}{p_k}}, &\t{ for }p=p_k, \\
N^{\frac{d-r}{2}-\frac{k}{p_k}}, & \t { for }0<p< p_k. 
\end{array}
\right.
$$
Moreover, the above bound is sharp, in the sense that for any $k=0,1,2,\ldots,r$, there exists a compact smooth $k$-dimensional submanifold $S$ of $T$ for which the above bound is saturated by a sequence of characters for all $p>0$. Furthermore, the above bound holds uniformly for all submanifolds $S$ which are translates by elements of $T$ of a fixed compact smooth $k$-dimensional submanifold $S_0$ of $T$, that is 
$$S=xS_0=S_0x=\{xy=yx: \ y\in S_0\},$$
with a constant $C$ independent of $x\in T$. 
\end{thm}

\begin{center}
\begin{table}
{\renewcommand{\arraystretch}{1.2}
\begin{tabular}{|c|c|}
\hline  
Type of $U$ & ${k}/{p_k}$, $k=1,2,\ldots,r-1$ \\
\hline 
$A_r (r\geq 1)$ &  $kr-{(k-1)k}/{2}$\\

\hline 
$B_r (r\geq 2)$ & $2kr-k^2$\\

\hline 
$C_r (r\geq 3)$ & $2kr-k^2$\\

\hline 
$D_r (r\geq 4)$ 
& 
\begin{tabular}{ll}
$2kr-k(k+1)$, & $1\leq k\leq r-4$; \\
$2r(r-1)-6$, & $k=r-3$; \\ 
$2r(r-1)-3$, & $k=r-2$; \\ 
$2r(r-1)-1$, & $k=r-1$.  
\end{tabular}\\

\hline 
$E_6$ & $16, 24, 30, 33, 35$\\

\hline 
$E_7$ &$27, 43, 51, 57, 60, 62$ \\

\hline 
$E_8$ & $57, 84, 100, 108, 114, 117, 119$ \\

\hline 

$F_4$ & $15, 20, 23$\\

\hline 
$G_2$ &  $5$\\

\hline

\end{tabular}
}
\caption{The critical exponents $p_k$}
\label{criticalexponent}

\end{table}
\end{center}

In fact, we will show that for each $k=0,1,\ldots,r$, the above bound is saturated if we pick the $k$-dimensional submanifolds of $T$ to be some of the $k$-dimensional facets of any Weyl alcove in $T$. A (closed) Weyl alcove is a simplex formed as the closure of any connected component of the complement of all the root hyperplanes in the universal cover of the maximal torus. 
Each $k$-dimensional (open) facet of the Weyl alcove lies on finitely many root hyperplanes; in fact, the ones that lie on the largest number of root hyperplanes among all $k$-dimensional facets will be chosen as the submanifolds to saturate the above bound. The semiclassical motivation for these choices begins with the observation that root hyperplanes consist of the focal points of the origin in the maximal torus with respect to the Riemannian geometry of the compact Lie group. Moreover, the multiplicity of each focal point is exactly twice the number of root hyperplanes on which it lies. The Peter--Weyl theorem tells us that any character $\chi$ can be viewed as a component of the Dirac delta function centered at the origin. 
By the quantum--classical correspondence principle, the behavior of $\chi$ should then reflect the classical picture, 
in which particles are emitted from the origin and move along geodesics in directions specified by the frequency of $\chi$ across \emph{all} maximal tori.
These particles reconvene at the focal points of the origin, occurring more frequently when the multiplicity of a focal point is larger—that is, when the number of root hyperplanes containing the focal point is greater.

The above heuristics helps explain the numerology of the exponents of $N$ in the above bound. 
As it turns out, the exponent 
$$\frac{d-r}{2}-\frac{k}{p_k}$$
is the exact number of root hyperplanes containing the chosen $k$-dimensional facet that saturates the corresponding bound. 

\begin{rem}
    As characters themselves are exponential sums on the maximal torus, Theorem \ref{characterrestriction} may prove to have additional values for understanding restriction bounds of general exponential sums (in terms of the scale parameter); see \cite{Dem23, DL23} for restriction bounds of exponential sums along the moment curve. 
\end{rem}

As a consequence of Theorem \ref{characterrestriction}, using Schur's test, we prove the following $L^p$ bounds of restriction of sums of matrix coefficients to maximal flats and their submanifolds.

\begin{thm}\label{jointrestriction}
Let $\psi$ be any sum of matrix coefficients of an irreducible representation of $U$ such that 
$\Delta\psi=-N^2\psi$, $N>1$. Let $S$ be a compact smooth $k$-dimensional submanifold of any maximal flat in $U$, where $k=0,1,2,\ldots,r$. Then for $p\geq 2$, it holds 
$$\|\psi\|_{L^p(S)}\leq C N^{\frac{d-r}{2}-\frac{k}{p}} \|\psi\|_{L^2(U)}$$ 
except when $U=\mathrm{SU}(2)\cong\mathbb{S}^3$, $S$ is (part of) a large circle on $U$, and $p=2$, in which case  we have 
\begin{align*}
\|\psi\|_{L^2(S)}\leq C N^{\frac{1}{2}} (\log N)^{\frac{1}{2}}\|\psi\|_{L^2(U)}.
\end{align*}
The above bounds are all sharp except for the log loss.\footnote{Again, this log loss has been eliminated by Chen--Sogge in \cite{CS14}. }
\end{thm}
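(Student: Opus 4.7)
The plan is to set up a single linear operator whose norm controls $\|\psi\|_{L^p(S)}/\|\psi\|_{L^2(U)}$ simultaneously for all matrix coefficients $\psi$ of the given irreducible representation $\pi$, then apply Schur's test at $p=2$ and Riesz--Thorin interpolation up to $p=\infty$. Left translations on $U$ are isometries of every $L^p(U)$ and carry matrix coefficients of $\pi$ to matrix coefficients of $\pi$ with the same $L^2(U)$ norm, so I may reduce to the case where $S$ lies inside a fixed maximal torus $T$. Writing $\chi_\pi$ for the character of $\pi$ and $d_\pi$ for its dimension, consider
\[
A\colon L^2(U)\longrightarrow L^p(S),\qquad Af:=\bigl(d_\pi\,\overline{\chi_\pi}\ast f\bigr)\big|_{S},
\]
which first projects $f$ onto the $\pi$-isotypic component of $L^2(U)$ and then restricts to $S$. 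Any matrix coefficient $\psi$ of $\pi$ satisfies $A\psi=\psi|_S$, hence $\|\psi\|_{L^p(S)}\le \|A\|_{L^2(U)\to L^p(S)}\,\|\psi\|_{L^2(U)}$, and it suffices to bound the operator norm.

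The convolution identity $\chi_\pi\ast\chi_\pi=\chi_\pi/d_\pi$ for irreducible characters shows, after a short computation, that $AA^*$ is the integral operator on $S$ with Hermitian kernel $K(x,x')=d_\pi\,\chi_\pi(x'x^{-1})$. Schur's test therefore produces
\[
\|A\|_{L^2(U)\to L^2(S)}^{2}=\|AA^*\|_{L^2(S)\to L^2(S)}\le d_\pi\,\sup_{x\in S}\|\chi_\pi\|_{L^1(Sx^{-1})}.
\]
Since each translate $Sx^{-1}$ is congruent to $S$ inside the flat torus $T$, Theorem~\ref{characterrestriction} applies uniformly in $x$. The key combinatorial observation is that a case-by-case inspection of Table~\ref{criticalexponent} shows $p_k<1$ for every simply connected compact simple $U$ and every $1\le k\le r$, with the sole exception $U=SU(2)$ (type $A_1$) and $k=r=1$, where $p_1=1$. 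Outside this exception we lie in the regime $p>p_k$ at $p=1$, so $\|\chi_\pi\|_{L^1(S)}\le CN^{(d-r)/2-k}$ and hence $\|A\|_{L^2(U)\to L^2(S)}\le CN^{(d-r-k)/2}$; in the exceptional case Theorem~\ref{characterrestriction} at $p=p_1=1$ picks up an extra factor $(\log N)^{1/p_1}=\log N$, producing the announced $(\log N)^{1/2}$ loss on the operator norm.

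At the opposite endpoint, Cauchy--Schwarz applied to $Af=d_\pi\overline{\chi_\pi}\ast f$ together with $\|\chi_\pi\|_{L^2(U)}=1$ gives the trivial bound $\|A\|_{L^2(U)\to L^\infty(S)}\le d_\pi\le CN^{(d-r)/2}$, and Riesz--Thorin interpolation between the two endpoints yields, for every $p\ge 2$,
\[
\|A\|_{L^2(U)\to L^p(S)}\le C\bigl(N^{(d-r-k)/2}\bigr)^{2/p}\bigl(N^{(d-r)/2}\bigr)^{1-2/p}=CN^{(d-r)/2-k/p},
\]
which is exactly the desired estimate. In the $SU(2)$ case the same interpolation propagates the $(\log N)^{1/2}$ factor to a $(\log N)^{1/p}$ factor: at $p=2$ this is precisely the stated exceptional bound, while for $p>2$ it is absorbed by the trivial matrix-coefficient inequality $\|\psi\|_{L^\infty(S)}\le\sqrt{d_\pi}\,\|\psi\|_{L^2(U)}\le CN^{1/2}\|\psi\|_{L^2(U)}\le CN^{1-1/p}\|\psi\|_{L^2(U)}$. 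The hardest point of care in the write-up will be the type-by-type verification from Table~\ref{criticalexponent} that $p_k<1$ away from $SU(2)$; once that numerical fact is granted, the Schur test plus Riesz--Thorin machinery delivers the full range $p\ge 2$ essentially mechanically.
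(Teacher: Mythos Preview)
Your approach is essentially the paper's: both set up the $TT^*$ operator with kernel $d_\mu\chi_\mu(y^{-1}x)$ on $S\times S$ and bound it via Schur's test, invoking Theorem~\ref{characterrestriction} for the kernel. The paper applies the generalized Schur test at each fixed $p\ge 2$, controlling the kernel in $L^{p/2}(S)$; you instead control the kernel in $L^1(S)$ (classical Schur, giving the $L^2(S)$ endpoint), take the trivial $L^\infty$ endpoint $\|A\|\le d_\pi$, and interpolate. Both rest on the same numerical fact $p_k\le 1$ (with equality only for $A_1$, $k=1$), which you correctly isolate; the paper phrases it as $p_k\le p_r=2r/(d-r)\le 1$.

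There is one genuine gap, in your treatment of the exceptional case $U=SU(2)$, $p>2$. Your interpolation carries an unwanted factor $(\log N)^{1/p}$ there, and you propose to absorb it via ``$\|\psi\|_{L^\infty(S)}\le\sqrt{d_\pi}\,\|\psi\|_{L^2(U)}$''. But in the paper's usage a matrix coefficient means any joint eigenfunction of the bi-invariant differential operators, i.e.\ any element of the $\pi$-isotypic component, and for such $\psi$ that inequality is false: take $\psi=\chi_\pi$, for which $\|\chi_\pi\|_{L^\infty}=d_\pi$ while $\|\chi_\pi\|_{L^2(U)}=1$, so the ratio is $d_\pi\asymp N$, not $\sqrt{d_\pi}$. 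The paper sidesteps this entirely: since $p/2>1=p_1$ whenever $p>2$, Theorem~\ref{characterrestriction} at exponent $p/2$ gives the clean bound $\|\chi_\pi\|_{L^{p/2}(S)}\lesssim N^{1-2/p}$ with no logarithm, and the generalized Schur test then yields $\|AA^*\|_{L^{p'}(S)\to L^p(S)}\lesssim N^{2-2/p}$ directly. You can patch the gap the same way, i.e.\ by running Schur with the kernel in $L^{p/2}$ for this one case rather than only in $L^1$.
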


For the special case of $S$ being the maximal flats themselves, the above bounds were proved in \cite{Mar16} but under certain regularity assumptions on the spectral parameter of $\psi$. Marshall asked in \cite{Mar16} if these bounds could be established unconditionally and for more general submanifolds. Thus Theorem \ref{jointrestriction} provides positive answers to both of Marshall's questions. The above bounds are also the correct ``convexity'' bounds or ``local'' bounds as coined by Sarnak \cite{Sar08, Mar16} for $L^p$ restriction norms of joint eigenfunctions. 
For future work, it would be interesting to get $L^p$ restriction bounds of general sums of matrix coefficients for $p$ below 2. 

The significance of proving Theorem \ref{jointrestriction} for all sums of matrix coefficients is reflected in its application to restriction bounds of general Laplace--Beltrami eigenfunctions. By a standard estimate of the number of ways of representing an integer by a positive definite integral quadratic form, we offer the following consequence of Theorem \ref{jointrestriction} for compact Lie groups of rank higher than one. 

\begin{thm}\label{gef}
Let $f$ be any eigenfunction on $U$ such that $\Delta f= -N^2f$, $N>1$.  
Let $S$ be a compact smooth $k$-dimensional submanifold of a maximal flat in $U$, where $k=0,1,2,\ldots,r$. Then 
\begin{align}\label{eq: gef}
    \|f\|_{L^p(S)}\leq C N^{\frac{d-2}{2}-\frac{k}{p}} \|f\|_{L^2(U)}
\end{align}
holds for all $p\geq 2$ and $r\geq 5$, and 
\begin{align}\label{eq: gef 234}
    \|f\|_{L^p(S)}\leq C_\varepsilon N^{\frac{d-2}{2}-\frac{k}{p}+\varepsilon} \|f\|_{L^2(U)} 
\end{align}
holds for all $\varepsilon>0$, $p\geq 2$ and $2\leq r\leq 4$.  In particular, for any geodesic segment $\gamma$ in $U$, we have that 
$$\|f\|_{L^p(\gamma)}\leq C N^{\frac{d-2}{2}-\frac{1}{p}} \|f\|_{L^2(U)}$$ 
holds for all $p\geq 2$ and $r\geq 5$, and that 
$$\|f\|_{L^p(\gamma)}\leq C_\varepsilon N^{\frac{d-2}{2}-\frac{1}{p}+\varepsilon} \|f\|_{L^2(U)}$$ 
holds for all $\varepsilon>0$, $p\geq 2$ and $2\leq r\leq 4$.  
\end{thm}

The above bounds are sharp for $r\geq 5$; see Remark \ref{rem: sharp?}. 
For higher rank compact Lie groups, the bounds in Theorem \ref{characterrestriction}, \ref{jointrestriction} and \ref{gef} all improve upon the general restriction bounds in Theorem \ref{BGT} with a power saving. In particular, Theorem \ref{gef} stands as one of the uncommon cases to have restriction bounds with a power saving for {\em general} Laplace--Beltrami eigenfunctions, besides which we only know of the following others: 
\begin{itemize}
\item The $L^p$ ($p\geq 2$) bound of restriction to totally geodesic submanifolds of eigenfunctions on a standard torus as in \cite{BR12, BR15, HZ21}. In fact, our bounds in Theorem \ref{gef} match those bounds in Theorem 9 of \cite{HZ21}. 
\item The $L^\infty$ bound (i.e. of restriction to points) of eigenfunctions $f$ on a general compact globally symmetric space $M$ of dimension $d$ and rank $r$:
$$\|f\|_{L^\infty(M)}\leq C_\varepsilon N^{\frac{d-2}{2}+\varepsilon}\|f\|_{L^2(M)}$$
for $2\leq r\leq 4$ and 
$$\|f\|_{L^\infty(M)}\leq C N^{\frac{d-2}{2}}\|f\|_{L^2(M)}$$
for $r\geq 5$. These bounds can be obtained by applying the Weyl dimension formula to joint eigenfunctions (see \cite{Sar08}) and combining it with the standard estimate for the number of representations of an integer by a positive definite integral quadratic form. 
\item Using sharp $L^p$ bounds for Jacobi polynomials (which serve as the analogue of Theorem \ref{characterrestriction} in the setting of compact rank-one symmetric spaces), we obtain bounds analogous to Theorem \ref{gef} for the restriction of eigenfunctions on any product 
$
M = M_1 \times \cdots \times M_r
$
of compact rank-one symmetric spaces of compact type (in particular, spheres) to submanifolds $S$ of maximal flats. In particular, if each factor $M_i$ has dimension at least $3$, then 
\[
\|f\|_{L^p(S)} \leq C_\varepsilon\, N^{\frac{d-2}{2} - \frac{k}{p} + \varepsilon}\, \|f\|_{L^2(M)}.
\]
We will treat this case in detail in forthcoming work \cite{next}.
\end{itemize}

Next we discuss the second class of submanifolds that will be treated in this paper, defined as follows.

\begin{defn*}
A torus-generated conjugation-invariant submanifold $Y$ of $U$ is defined as the orbit of a compact submanifold $S$ of a facet of a Weyl alcove in a maximal torus of $U$ under the conjugation action.\footnote{As any Weyl alcove of any maximal torus is a fundamental domain for the conjugation action of $U$, the manifold $S$ in the above definition is determined up to conjugation for any torus-generated conjugation-invariant submanifold $Y$.} The rank of $Y$ is defined as the dimension of $S$. 
\end{defn*}

Such $Y$'s are indeed immersed submanifolds of $U$; see Section \ref{invariantsub} for a detailed discussion. A typical example is that of a conjugacy class. To illustrate, Table \ref{tab: submanifolds} presents several representative submanifolds of the rank-$2$ group $\mathrm{SU}(3)$, including two conjugacy classes that are also maximal totally geodesic submanifolds. Each torus-generated conjugation-invariant submanifold of $U$ carries a canonical volume measure induced by the Riemannian metric on $U$. On these submanifolds, we establish the following $L^p$ bounds for characters.

\begin{thm}\label{singular}
Let $Y$ be a torus-generated conjugation-invariant  submanifold of $U$ of dimension $n$ and rank $k$, $k=0,1,2,\ldots,r$.
Let $\chi$ be the character of an irreducible representation of $U$ such that 
$\Delta\chi=-N^2\chi$, $N>0$. Let $p_0=0$ and $p_r={2r}/{(d-r)}$. For $k=1,2,\ldots,r-1$, let $p_k$ be as given in Table \ref{criticalexponent}. Then
$$\|\chi\|_{L^p(Y)}\leq C\cdot 
\left\{
\begin{array}{lll}
 N^{\frac{d-r}{2}-\frac{n}{p}}, & \t{ for }p>2+p_k, 
\\
 N^{\frac{d-r}{2}-\frac{n}{2+p_k}}(\log N )^{\frac{1}{2+p_k}}, &\t{ for }p=2+p_k, \\
N^{\frac{d-r}{2}-\frac{k}{p_k}-\frac{n-k-2k/p_k}{p}}, & \t { for }2\leq p< 2+p_k. 
\end{array}
\right.
$$
Here it always holds that $n-k-2k/p_k\geq 0$. 
Moreover, for each $k=0,1,2,\ldots,r$, there exists a torus-generated conjugation-invariant  submanifold $Y$ of $U$ of dimension $n$ and rank $k$ for which $n-k-2k/p_k=0$ and that the above bound is saturated by a sequence of characters for all $p\geq 2$. 
\end{thm}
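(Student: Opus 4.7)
The plan is to reduce Theorem \ref{singular} to a weighted integral on the $k$-dimensional submanifold $S$ and then feed it into the barycentric-semiclassical machinery used to prove Theorem \ref{characterrestriction}. Since $\chi$ is a class function and $Y$ is the orbit under conjugation of $S\subset F$, the orbit map $U/Z_F\times S\to Y$ --- where $Z_F$ is the common centralizer of the points in the relative interior of the facet $F$ of the Weyl alcove containing $S$ --- has Jacobian equal to $|\det(\t{Ad}(x^{-1})-1)|_{\mathfrak{u}/\mathfrak{z}_F}|=\prod_{\alpha>0\,\t{transv.}}(2\sin(\alpha(H)/2))^2$ at $x=\exp(iH)$, the product running over the $(n-k)/2$ positive roots not vanishing on $F$. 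The coarea formula then yields
$$
\|\chi\|_{L^p(Y)}^p\;=\;C_F\int_S |\chi(x)|^p\!\prod_{\alpha>0\,\t{transv.}}\bigl(2\sin(\alpha(H)/2)\bigr)^2\,dV_S(x).
$$
Invoking Weyl's character formula together with the coset decomposition $W=\bigsqcup_c W_{Z_F}\cdot w_c$, I would next write $\chi|_F=\Psi_\lambda/\delta_{U/Z_F}$, where $\delta_{U/Z_F}$ is the partial Weyl denominator over transverse positive roots and $\Psi_\lambda$ is a signed sum of $|W/W_{Z_F}|$ characters of the centralizer group $Z_F$ with highest weights $w_c(\lambda+\rho)-\rho_{Z_F}$. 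Two powers of $\delta_{U/Z_F}$ then cancel against the Jacobian, producing the crucial identity
$$
\|\chi\|_{L^p(Y)}^p\;=\;C_F\int_S\frac{|\Psi_\lambda(x)|^p}{|\delta_{U/Z_F}(x)|^{p-2}}\,dV_S(x).
$$

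The right-hand side is now a weighted $L^p$-integral on $S\subset F$ of an explicit virtual character $\Psi_\lambda$ of the smaller Lie group $Z_F$, with a weight $|\delta_{U/Z_F}|^{-(p-2)}$ that is trivial at $p=2$ and becomes singular only near lower-dimensional subfacets of $F$. I would analyze this integral on the barycentric-semiclassical cover of the Weyl alcove; on each piece of the cover both $|\Psi_\lambda|$ and $|\delta_{U/Z_F}|^{-1}$ admit sharp pointwise bounds in terms of the scale parameters indexing the cover, and the contribution of each piece can be estimated separately. The shift of the critical exponent from $p_k$ (in Theorem \ref{characterrestriction}) to $2+p_k$ (here) reflects precisely the two extra powers of $\delta_{U/Z_F}$ supplied by the Jacobian: for $p>2+p_k$ the weight gain dominates all pieces and one recovers the clean exponent $(d-r)/2-n/p$; at $p=2+p_k$ many pieces contribute comparably, yielding the $(\log N)^{1/(2+p_k)}$ loss at the kink; and for $2\le p<2+p_k$ the dominant contribution comes from a specific sub-configuration of vanishing transverse roots where the balance between $|\Psi_\lambda|$ and $|\delta_{U/Z_F}|^{-1}$ produces the frozen exponent $(d-r)/2-k/p_k-(n-k-2k/p_k)/p$, which at its $p=2$ endpoint is already consistent with the trivial dimension bound $|\Psi_\lambda|\lesssim N^{(d-r)/2-(n-k)/2}$ coming from the Weyl dimension formula applied to $Z_F$.

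For the sharpness portion I would take $S$ to be (a piece of) the $k$-dimensional facet of the Weyl alcove whose tangential roots form a rank-$(r-k)$ subsystem of largest cardinality, as dictated by the ``slowest peeling'' of $\Sigma$ that furnishes Table \ref{criticalexponent}. For this choice the number of transverse positive roots is exactly $k/p_k$, whence $n-k-2k/p_k=0$; testing against a suitable sequence of highest weights $\lambda$ --- for instance, $\lambda=N\omega$ for a fundamental weight $\omega$ orthogonal to the tangential subsystem --- produces characters for which $\Psi_\lambda$ achieves its maximal pointwise size $\sim N^{(d-r)/2-k/p_k}$ on the interior of $F$, saturating each of the three regimes. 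The principal obstacle in this program is the combinatorial bookkeeping on the barycentric cover: since the weight $|\delta_{U/Z_F}|^{-(p-2)}$ involves only the transverse roots while the cover is organized by proximity to \emph{all} root hyperplanes, one must verify that no lower-dimensional arrangement of small transverse roots contributes an unforeseen larger term. This is precisely where the ``slowest peeling'' property of irreducible root systems advertised after Theorem \ref{characterrestriction} plays a genuine role, ensuring that the worst relevant sub-configuration is the one already encoded by $p_k$ rather than some more singular competitor.
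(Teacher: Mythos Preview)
Your upper-bound argument is essentially the paper's own proof. The paper uses the adapted Weyl integration formula (Lemma~\ref{Weylintegration}) to obtain exactly your identity $\|\chi\|_{L^p(Y)}^p=C\int_S|\chi|^p|\delta_J|^2$, then on each piece $S_{K,J'}$ of the barycentric-semiclassical cover (with $J\subset J'$) applies the character decomposition \eqref{char} with respect to $J'$ and Lemma~\ref{charbound} to arrive at
\[
\bigl\|\chi_\mu\cdot|\delta_J|^{2/p}\bigr\|_{L^p(S_{K,J'})}\lesssim N^{-\frac{2}{p}(|\Sigma_{J'}^+|-|\Sigma_J^+|)+|\Sigma_{J'}^+|}\Bigl\|\tfrac{1}{\delta_{J'}}\Bigr\|_{L^{p-2}(S_{K,J'})}^{1-2/p},
\]
after which Proposition~\ref{keyprop} with exponent $p-2$ gives the three regimes and the shift $p_k\mapsto 2+p_k$ you describe. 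Your intermediate object $\Psi_\lambda$ is just $\chi_\mu\cdot\delta_J$; the paper avoids naming it and factors directly relative to the deeper $J'$ on each piece, but the content is the same.

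For sharpness, however, your suggested test sequence $\lambda=N\omega$ with $\omega$ orthogonal to the tangential subsystem is a genuine divergence from the paper and is likely a gap. The paper takes $\mu=N\rho$, which via the Weyl denominator identity yields the explicit product formula \eqref{CSharp}, $|\chi_\mu(\exp H)|=N^{|\Sigma_J^+|}\,|\delta_J(NH)|/|\delta_J(H)|$ on $A_J$; the lower bound then reduces to controlling the nodal set of the single product $\delta_J(N\cdot)$, which is done by the tiny-alcove argument around \eqref{shrink}. With your choice $\lambda=N\omega$, $\Psi_\lambda$ is a genuinely signed sum of $|W/W_J|$ characters of $Z_F$ with varying phases $e^{(s\mu)(H_J^\perp)}$, and there is no evident factorization; the claim that it ``achieves its maximal pointwise size $\sim N^{(d-r)/2-k/p_k}$ on the interior of $F$'' is precisely a lower bound for a nodal-type sum and is not established by anything in your outline. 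The paper explicitly flags this difficulty for $\mu\neq N\rho$ in Remarks~\ref{varymu} and~\ref{varymu2}. So for the sharpness half you should either switch to $\mu=N\rho$ or supply a separate argument controlling cancellation in $\Psi_\lambda$.
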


Theorem \ref{singular} parallels Theorem \ref{characterrestriction}, and will be proved in a similar manner. In particular, the adapted Weyl integration formula (Lemma \ref{lem: adapted Weyl formula}) transfers integration over torus-generated conjugation-invariant  submanifolds to integration over submanifolds of the facets. The $p=2$ case of the above bounds has additional sharpness, which we record below as a separate theorem.

\begin{thm}\label{singular2}
Let $Y$ be a torus-generated conjugation-invariant  submanifold of $U$ of dimension $n$ and rank $k$, $k=0,1,2,\ldots,r$.
Let $\chi$ be the character of an irreducible representation of $U$ such that 
$\Delta\chi=-N^2\chi$. Then
$$\|\chi\|_{L^2(Y)}\leq CN^{\frac{d-r}{2}-\frac{n-k}{2}}.$$
Moreover, the above bound is sharp in the sense that it is saturated by a sequence of characters whenever $Y$ is the orbit of any facet of the Weyl alcove under the conjugation action. 
\end{thm}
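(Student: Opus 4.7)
The plan is to reduce $\|\chi\|_{L^2(Y)}^2$ to a weighted integral over the base $S$ via the adapted Weyl integration formula, and then estimate the integrand using the Weyl character formula together with the Weyl dimension formula applied to the Levi stabilizer of the ambient facet. Let $F$ be the facet of the Weyl alcove containing $S$; let $\Sigma_S \subset \Sigma$ denote the set of roots whose root hyperplanes contain $F$, with $\Sigma_S^+ = \Sigma_S \cap \Sigma^+$ and $W_S$ the corresponding sub-Weyl group; and let $L$ be the Levi subgroup whose root system is $\Sigma_S$, which is the generic conjugation stabilizer of points of $\exp F$. Then $\dim L = r + 2|\Sigma_S^+|$, so $n-k = \dim(U/L) = (d-r) - 2|\Sigma_S^+|$. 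Since $|\chi|^2$ is conjugation-invariant and the $U$-orbit of $\exp H \in F$ has volume proportional to $|\delta_Y(H)|^2 := \prod_{\alpha \in \Sigma^+ \setminus \Sigma_S^+}|2\sin(\alpha(H)/2)|^2$, integration along the orbit fibration of $Y$ over $S$ gives
$$
\|\chi\|_{L^2(Y)}^2 = c\int_S |\chi(\exp H)|^2 |\delta_Y(H)|^2 \, dH = c\int_S \frac{|A_{\lambda+\rho}(H)|^2}{|\delta_S(H)|^2} \, dH,
$$
where $\delta = \delta_S\delta_Y$ is the full Weyl denominator, $A_{\lambda+\rho}$ the Weyl numerator, and the integrand extends smoothly across the zero set of $\delta_S$ containing $F$.

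The crux is the pointwise bound $|A_{\lambda+\rho}(H)|/|\delta_S(H)| \leq C N^{|\Sigma_S^+|}$ for $H \in F$. To prove it I partition $W$ into left cosets modulo $W_S$ and regroup
$$
A_{\lambda+\rho}(H) = \sum_{[w] \in W/W_S} \det(w) \sum_{u \in W_S}\det(u)\, e^{i(uw(\lambda+\rho))(H)}.
$$
Each inner sum is $W_S$-alternating in $H$, hence divisible in the algebra of exponential polynomials on $T$ by $\delta_S(H)$, and the Weyl denominator identity for the root subsystem $\Sigma_S$ identifies the quotient, up to sign, as the character of an irreducible $L$-representation $V_\sigma^L$ whose highest weight is the $\Sigma_S$-dominant $W_S$-translate of $w(\lambda+\rho) - \rho_S$ (or as zero if $w(\lambda+\rho)$ lies on a $W_S$-wall). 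By the Weyl dimension formula for $L$, each such character is uniformly bounded by $\dim V_\sigma^L \lesssim N^{|\Sigma_S^+|}$, while $|W/W_S|$ is a constant independent of $N$; the pointwise bound follows. Substituting back yields $\|\chi\|_{L^2(Y)}^2 \leq C N^{2|\Sigma_S^+|} \mathrm{vol}(S)= C' N^{(d-r)-(n-k)}$, which is the stated estimate upon taking square roots.

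The main technical delicacy is the justification of the Levi-character factorization precisely on $F$, where numerator and denominator vanish simultaneously to order $|\Sigma_S^+|$ in the directions normal to $F$; this is handled by verifying the identity on a punctured neighborhood of $F$ inside $T$ (where both ratios are smooth and the factorization holds via Weyl's identities on $L$) and then extending across the singular set by continuity. For the sharpness claim, when $Y$ is the orbit of the whole facet $F$, I would take the sequence $\chi_{N\lambda_0}$ for a fixed dominant regular weight $\lambda_0$: exactly one coset in the decomposition above contributes a single Levi character with highest weight comparable to $N\lambda_0 + \rho - \rho_S$ at top order, its dimension is genuinely of order $N^{|\Sigma_S^+|}$ by the Weyl dimension formula, and a direct $L^2$ computation of this character against $\mathrm{vol}(F)$ on $F$ is uniformly bounded below in $N$, producing the matching lower bound $\|\chi_{N\lambda_0}\|_{L^2(Y)} \gtrsim N^{(d-r)/2 - (n-k)/2}$.
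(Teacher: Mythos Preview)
Your upper bound argument is correct and is essentially the paper's own proof for the $p=2$ case, stripped of the barycentric--semiclassical cover (which the paper carries along only for uniformity with $p>2$ and explicitly notes is unnecessary when $p=2$). Your coset decomposition is Lemma~\ref{lemdecomposition}, and your Levi dimension bound is Lemma~\ref{charbound}; the pointwise inequality $|\chi_\mu|\cdot|\delta_J|\lesssim N^{|\Sigma_J^+|}$ then holds uniformly on $T$, so integrating over $S$ and invoking \eqref{dimY} gives the result.

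The sharpness argument, however, has a genuine gap. Your claim that ``exactly one coset \dots\ contributes a single Levi character with highest weight comparable to $N\lambda_0+\rho-\rho_S$'' is misleading: for a regular $\lambda_0$ \emph{every} coset $[w]\in W/W_S$ contributes a nonzero Levi character of dimension $\asymp N^{|\Sigma_S^+|}$, each multiplied by a distinct oscillatory phase $e^{(w\mu)_J^\perp(H)}$. To get a lower bound on $\|\chi_\mu\cdot\delta_J\|_{L^2(A_J)}$ you must therefore rule out cancellation among these $|W|/|W_S|$ terms on the simplex $A_J$, where no orthogonality is available. Your ``direct $L^2$ computation'' does not address this, and in fact the paper's Remark~\ref{varymu} flags exactly this difficulty for general $\lambda_0$ as requiring a nodal-set analysis that is not carried out. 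The paper sidesteps the issue by taking $\mu=N\rho$, for which the Weyl product formula gives the closed form $|\chi_{N\rho}(\exp H)|\cdot|\delta_J(H)|=N^{|\Sigma_J^+|}|\delta_J(NH)|$ on $A_J$ (see \eqref{CSharp}); the lower bound $\|\delta_J(N\cdot)\|_{L^2(A_J)}\gtrsim 1$ then follows from the concrete shrinking-alcove argument of Section~\ref{sharpness}. You should either adopt this specific choice of $\mu$ or supply the missing non-cancellation argument.
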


Just as Theorem \ref{characterrestriction}, \ref{jointrestriction} and \ref{gef}, the above two theorems also improve upon the general restriction bounds of Burq--G\'erard--Tzvetkov with a power saving for compact Lie groups of rank higher than one. 

As an important corollary, applying Theorem~\ref{singular} to the set $Y$ of regular points of $U$, and noting that $\|\cdot\|_{L^p(Y)} = \|\cdot\|_{L^p(U)}$, we obtain:

\begin{thm}\label{globalbounds}\footnote{
In \cite{Zha23}, we treated these global character bounds, but the estimate at the kink point and the question of sharpness remained unresolved, as the previous approach was unnecessarily complicated and certain crucial insights were not yet available. These insights include the systematic treatment in Section \ref{peelingrs} of the combinatorics of root hyperplane arrangements, particularly those at the non-origin vertices of the alcove (see Remark \ref{nonorigin}), as well as a simpler argument for applying this combinatorics, as implemented in {\bf Step 5} of the proof of Proposition \ref{keyprop}.
}
Let $\chi$ be the character of an irreducible representation of $U$ 
such that $\Delta\chi=-N^2\chi$. 
Then 
$$\|\chi\|_{L^p(U)}\leq C\cdot 
\left\{
\begin{array}{lll}
 N^{\frac{d-r}{2}-\frac{d}{p}}, & \t{ for }p>\frac{2d}{d-r}, \\
(\log N )^{\frac{d-r}{2d}} &  \t{ for }p=\frac{2d}{d-r},  \\
1, & \t { for }2\leq p< \frac{2d}{d-r}. 
\end{array}
\right.
$$
Moreover, this bound is sharp for all $p\geq 2$. 
\end{thm}

\begin{rem}[Limitation of our methods and future directions]
Firstly, we note that the deduction of Theorems~\ref{jointrestriction} and~\ref{gef} from Theorem~\ref{characterrestriction} via Schur's test relies in an essential way on the group structure of a maximal torus. At present it is unclear how to obtain analogous restriction bounds for sums of matrix coefficients or for general Laplace--Beltrami eigenfunctions on torus-generated conjugation-invariant submanifolds. 
Secondly, we restrict our attention to submanifolds of maximal flats and to torus-generated conjugation-invariant submanifolds, since our arguments crucially rely on the Weyl character formula, which directly encodes information about restrictions to tori. 
The restriction problem for general submanifolds would require different methods (see Table \ref{tab: submanifolds} for various submanifolds of $\mathrm{SU}(3)$). In particular, for subgroups, the problem could be studied using the asymptotic behavior of irreducible representations under restriction, as established by Heckman \cite{Hec82}. We leave the exploration of this intriguing direction to future work.

\end{rem}

\begin{table}[ht]
\centering
\renewcommand{\arraystretch}{1.4}
\begin{tabular}{|p{5cm}|p{6cm}|p{3cm}|}
\hline
\textbf{Submanifold} & \textbf{Description} &\textbf{Restriction Problem}\\
\hline
Maximal torus $T^2$ (dim $2$) & Diagonal unitary matrices with determinant $1$.  \newline Subgroup (and thus totally geodesic).& \it{Solved} \\
\hline
$\mathrm{SU}(2) $ (dim $3$) & Block-diagonal embedding 
$\Bigl(\begin{smallmatrix} \mathrm{SU}(2) & 0 \\ 0 & 1 \end{smallmatrix}\Bigr)$. \newline Diffeomorphic to $S^3$. \newline Subgroup (and thus totally geodesic). & \it{Unsolved}\\
\hline
$\mathrm{SO}(3) $ (dim $3$) & Real $3\times 3$ rotation matrices embedded as complex unitary matrices. \newline Diffeomorphic to $\mathbb{R}P^3$. \newline Subgroup and maximal totally geodesic.& \it{Unsolved} \\
\hline
$\mathrm{S}(\mathrm{U}(2)\times \mathrm{U}(1))$ (dim $4$) & Block-diagonal subgroup preserving a 2-plane. \newline Diffeomorphic to $(S^3 \times S^1)/\mathbb{Z}_2$. \newline Subgroup and maximal totally geodesic. & \it{Unsolved}\\
\hline
$\mathrm{SU}(3)/\mathrm{SO}(3)$ (dim $5$) & SU(3)-conjugates of all matrices in $\mathrm{SO}(3)$. \newline Maximal totally geodesic. & \it{Unsolved}\\
\hline
Partial flag manifold $\mathrm{SU}(3)/\mathrm{S}(\mathrm{U}(2)\times \mathrm{U}(1))$ (dim $4$) & Conjugacy class of any diagonal matrix with eigenvalue multiplicities $(2,1)$ (e.g., $\{1,-1,-1\}$). \newline Diffeomorphic to $\mathbb{C}P^2$. \newline Maximal totally geodesic.& \it{Solved only for characters} \\
\hline
Full flag manifold $\mathrm{SU}(3)/T^2$ (dim $6$) & Conjugacy class of any diagonal matrix with distinct eigenvalues. \newline Diffeomorphic to $F_{1,2}(\mathbb{C}^3)$.& \it{Solved only for characters}\\
\hline
\end{tabular}

\caption{Submanifolds of $\mathrm{SU}(3)$ and the restriction problem}\label{tab: submanifolds}
\end{table}

\subsection*{Overview of proof and organization of paper} 
The central result of this paper is Theorem \ref{characterrestriction}, the proof of which is given in Section \ref{MAIN}. As done in \cite{Zha23}, we incarnate the quantum-classical correspondence heuristics by first making a so-called ``barycentric-semiclassical'' subdivision of a fixed Weyl alcove, according to distance from the root hyperplanes. See Figure \ref{bssfigure} for the example of the group $\mathrm{SU}(3)$. Here the ``barycentric'' subdivision refers to distances as compared to a fixed scale, while the ``semiclassical'' subdivision refers to distances as compared to the ``Plank constant'' $1/N$. In particular, for the purpose of better exposition, the barycentric subdivision presented in this paper will be of a different form from that in \cite{Zha23}. In fact, Marshall in \cite{Mar16} applied a similar but finer dyadic subdivision of the Weyl alcove in a neighborhood of the origin. As it turns out, the finer dyadic subdivision is not needed for the purpose of this paper. 

For restriction bounds to a submanifold of the Weyl alcove, it suffices to consider each portion of the submanifold lying within the corresponding piece of the barycentric-semiclassical subdivision. See Figure \ref{curve} for the example of subdividing a curve in the alcove of $\mathrm{SU}(3)$. Based on the heuristics for the saturation of bounds in Theorem \ref{characterrestriction}, the curve $S$ in Figure \ref{curve}, chosen to be {\it tangential} to the left edge of the triangle, is expected to saturate the restriction bounds for characters, just as the edges of the triangle do.

Among the seven pieces $S_i$ ($i=1,\ldots,7$), we identify the likely dominant contributors to the restriction bound as follows. Piece $S_5$ may contribute the least, as it lies farthest from the edges. Comparing $S_3$ and $S_4$, we note that $S_4$ is both away from the bottom edge and not as close to the left edge, so it is less significant. A similar comparison between $S_4$ and $S_6$ further suggests that $S_4$ is not a primary contributor.

The remaining pieces are the main contenders, with $S_1$ and $S_7$ expected to be roughly equally significant, since both lie within a distance of $N^{-1}$ from the edges.

So who wins? A careful consideration suggests $S_2$. Compared with $S_3$, $S_2$ is closer to the bottom edge, a heuristic realized in {\bf Step 3} of the proof of Proposition \ref{keyprop}, which contains the main argument for Theorem \ref{characterrestriction}. Compared with $S_6$, $S_2$ is closer to the left edge, confirmed in {\bf Step 4}. Compared with $S_1$ (or $S_7$), although $S_1$ is closer to the bottom edge, it shrinks to a point as $N\to\infty$, whereas $S_2$ retains its substance and approaches the bottom edge in the limit; this is realized in {\bf Step 6}.

{\bf Step 1} of the proof of Proposition \ref{keyprop} involves selecting appropriate distances from the alcove walls to parameterize the submanifold, after taking a finite cover. In particular, both the finite cover and the parametrizations are made ``uniform'' for all translates of a fixed submanifold in the maximal torus, which is instrumental in the derivation of Theorem \ref{jointrestriction} from Theorem \ref{characterrestriction}.

The proof of Proposition \ref{keyprop} also involves two additional steps, namely {\bf Step 2} and {\bf Step 5}, which are closely related to the combinatorics of the arrangement of root hyperplanes. Near each vertex of the Weyl alcove, as mentioned earlier, we use distances from the alcove walls to parameterize the submanifold. In {\bf Step 2}, we order these distances to obtain another subdivision of (neighborhoods of) the alcove, allowing us to control the distances from all \emph{other} root hyperplanes containing the vertex, with the help of the combinatorial structure. These distances collectively determine the extent to which the characters are concentrated.  

Using the classification of root systems, we show in Section \ref{peelingrs} that, for each compact simple Lie group, there exist ``optimal'' ways to order the distances from the alcove walls near the origin. Compared to any other ordering near \emph{any} vertex, these orderings maximize the concentration of characters; this is illustrated in {\bf Step 5}.

Preparation work will be given in Section \ref{alcovereview} where we set up the notations about compact Lie groups, the Weyl alcoves and their facets, the key character formulas, and present the barycentric-semiclassical subdivision of the Weyl alcove. After proving Theorem \ref{characterrestriction} in Section \ref{peelingrs} and \ref{MAIN}, we demonstrate the sharpness of 
Theorem \ref{characterrestriction} in Section \ref{sharpness}. Theorem \ref{jointrestriction} and \ref{gef} are proved in Section \ref{application} and \ref{proofofgef} respectively. In Section \ref{invariantsub}, we take a closer look at torus-generated conjugation-invariant  submanifolds and in particular set up the Weyl integration formula adapted to them. 
In the last section, we prove Theorem \ref{singular} and \ref{singular2} along with their sharpness. 

\subsection*{Notation}
Throughout the paper, we write $a\lesssim b$ if $a \leq Cb$ for some
positive constant $C$, and write $a\asymp b$ if $a \lesssim b \lesssim a$. We write $a\ll b$ if there is a sufficiently small positive constant $c$ such that $a\leq cb$. 
For $1\leq p\leq\infty$, we use $p'$ to denote $1/(1-1/p)$.

\subsection*{Acknowledgments}
This project is partially supported by National Key R{\&}D Program of China (No. 2022YFA1006700). The author would like to thank Simon Marshall for helpful discussions and for making a visit to the University of Wisconsin--Madison possible. The author would also like to thank Jiaqi Hou and Xiaocheng Li for helpful discussions, and the anonymous referees for their valuable comments and suggestions.

\section{The Weyl alcove and the characters}\label{alcovereview}

We review basic structure and representation theory of compact Lie groups that can be found in standard texts such as \cite{Bou02}, \cite{Hel01} and \cite{Hel00}, and some machinery developed in \cite{Zha23} for the analysis of characters. 

\subsection{Structure of compact Lie groups and their alcoves}
Let $U$ be a compact connected simple Lie group with Lie algebra $\mathfrak{u}$. Let $\mathfrak{t}$ be a Cartan subalgebra, i.e. a maximal abelian subalgebra of $\mathfrak{u}$ and let $T$ be the corresponding analytic subgroup which is a maximal torus of $U$.  
Let $\mathfrak{t}^*$ denote the real dual space of $\mathfrak{t}$ and let $i$ denote the imaginary unit so that $i\mathfrak{t}^*$ is the space of linear forms on $\mathfrak{t}$ that take imaginary values. Let 
$\Sigma\subset i\mathfrak{t}^*$ be the root system of $(\mathfrak{u},\mathfrak{t})$. Fix a simple system $\{\alpha_1,\ldots,\alpha_r\}$ of $\Sigma$. Let $\alpha_0\in \Sigma$ be the corresponding lowest root. 
For $\alpha\in\Sigma$ and $n\in\Z$, define the root hyperplanes
$$\mathfrak{t}_{\alpha,n}:=\{H\in\mathfrak{t}:\ \alpha(H)/2\pi i+n=0\}.$$ 
These hyperplanes cut the ambient space $\mathfrak{t}$ into alcoves. 

For each $j=0,1,\ldots,r$, set the ``distance'' variables 
\begin{align}\label{distancev}
t_j(H):=\alpha_j(H)/2\pi i+\delta_{0j}
\end{align}
where $H\in\mathfrak{t}$. Here $\delta_{0j}$ equals 1 if $j=0$ and 0 otherwise. Let 
$$A:=\{H\in\mathfrak{t}:\ t_j(H)\geq 0,\ \forall j=0,\ldots,r\}$$ 
be the (closed) fundamental alcove. 
The walls of $A$ lie on the root hyperplanes $\mathfrak{t}_{\alpha_j,\delta_{0j}}$, $j=0,\ldots,r$. 
Under the exponential mapping $\exp: \mathfrak{t}\to T$, the alcove $A$ embeds in $T$, so we may also view $A$ as a subset of $T$. Let $W$ denote the finite Weyl group that acts on $T$, $\mathfrak{t}$ as well as on $\mathfrak{t}^*$.

The alcoves are simplices whose geometry may be described using the extended Dynkin diagram for $\Sigma$. Each $\alpha_j$ ($j=0,\ldots,r$) corresponds to a node in the extended Dynkin diagram (Figure \ref{dynkin}), and for each proper subset $J$ of $\{0,\ldots,r\}$, $\{\alpha_j, \ j\in J\}$ is a simple system for the rank-$|J|$ parabolic subsystem $\Sigma_J$ of $\Sigma$. The finite Dynkin diagram (or simply the Dynkin diagram) of $\Sigma_J$ can be obtained from the extended Dynkin diagram of $\Sigma$ by removing all the nodes not belonging to $J$. Associated to the simple system $\{\alpha_j, \ j\in J\}$ of $\Sigma_J$ is the positive system $\Sigma^+_J$ of $\Sigma_J$.

The facets of $A$ correspond to proper subsets of $\{0,\ldots,r\}$. For $J\subsetneqq\{0,\ldots,r\}$, 
\begin{align*}
A_J:&=\{H\in \mathfrak{t}:\ t_j(H)=0,\ \forall j\in J; \ t_j(H)>0,\ \forall j\notin J\} 
\end{align*}
is the corresponding $(r-|J|)$-dimensional facet.
We have 
$$A=\bigsqcup_{J\subsetneqq\{0,\ldots,r\}} A_J.$$ 

By the definition of $A_J$, if a root hyperplane $\mathfrak{t}_{\alpha,n}$ contains $A_J$, then $\alpha\in\Sigma_J$. 
For $J\subsetneqq\{0,\ldots,r\}$, let $W_J$ denote the finite Weyl group of $\Sigma_J$.

\begin{figure}
$\tilde{A}_1$:\,\dynkin[extended, labels={0,1}, edge length=1cm]A1\ 
$\tilde{A}_r$:\,\dynkin[extended, labels={0,1,2,r-1,r}, edge length=1cm]A{}\ 
$\tilde{B}_r$:\,\dynkin[extended, labels={0,1,2,3,r-2,r-1,r}, edge length=1cm]B{}\ 
$\tilde{C}_r$:\,\dynkin[extended, labels={0,1,2,r-2,r-1,r},  edge length=1cm]C{}\ 
$\tilde{D}_r$:\,\dynkin[extended, labels={0,1,2,3,r-3,,r-1,r}, labels*={,,,,,r-2,,}, edge length=1cm]D{}\ 

$\tilde{E}_6$:\,\dynkin[extended, labels={0,1,2,3,4,5,6},  edge length=1cm]E6\ 
$\tilde{E}_7$:\,\dynkin[extended, labels={0,1,2,3,4,5,6,7},  edge length=1cm]E7\ 
$\tilde{E}_8$:\,\dynkin[extended, labels={0,1,2,3,4,5,6,7,8},  edge length=1cm]E8\ 
$\tilde{F}_4$:\,\dynkin[extended , labels={0,1,2,3,4}, edge length=1cm]F4\ 
$\tilde{G}_2$:\,\dynkin[extended , labels={0,1,2}, edge length=1cm]G2

\caption{Extended Dynkin diagrams}\label{dynkin}
\end{figure}

\subsection{Barycentric-semiclassical subdivision} 
\label{sectionbarycentric}

From the semiclassical perspective, the characters of a compact Lie group should concentrate near focal points of the origin, which form the walls of alcoves of maximal tori. This motivates the semiclassical subdivision of the alcove according to how close the points are from each facet. 
Let $N$ be the growing parameter equal to the square root of the negative of the Laplace--Beltrami eigenvalue in question. For $J\subsetneqq\{0,\ldots,r\}$ let $A_J$ be the corresponding facet of $A$. We define a subset $P_J$ of $A$ that consists of points close to $A_J$ within a distance of $\lesssim N^{-1}$ but away from all the $A_K$ ($K\not\subset J$) by a distance of $\gtrsim N^{-1}$, or 
equivalently and more precisely, points that are $\leq N^{-1}$ close to the root hyperplanes $\mathfrak{t}_{\alpha_j,\delta_{0j}}$ for $j\in J$ while $>N^{-1}$ far from the other $\mathfrak{t}_{\alpha_j,\delta_{0j}}$ for $j\notin J$, namely,  
$$P_J:=\{H\in A:\ t_j(H)\leq N^{-1},\ \forall j\in J; \ t_j(H)> N^{-1},\ \forall j\notin J\}.$$ 
Then we have the so-called semiclassical subdivision (see Figure \ref{bssfigure})
\begin{align}
\label{semi}
A=\bigsqcup_{J\subsetneqq\{0,\ldots,r\} } P_J.
\end{align}

The fact that the points in $P_J$ are away from all the $A_K$ ($K\not\subset J$) by a distance of $\gtrsim N^{-1}$ is not going to be enough for our purpose. We would need to monitor for each point of $P_J$ a little more precisely how far it is from $A_K$ ($K\not\subset J$): is it close to $A_K$ within a small but fixed distance (independent of $N$) or away from $A_K$ by at least such a distance?
This can be done using a fattened version of the semiclassical subdivision. Namely, for a small but fixed positive constant $c$, set
$$\mathcal{N}_K:=\{H\in A:\ t_j(H)\leq c,\ \forall j\in K; \ t_j(H)> c,\ \forall j\notin K\}.$$ 
Just as $P_K$, $\mathcal{N}_K$ is a neighborhood of the barycenter of the facet $A_K$ ($K\subsetneqq \{0,\ldots,r\}$), and each $\mathcal{N}_K$ has the good property of staying close to the facet $A_K$ but away from all the $A_{K'}$ ($K'\not\subset K$) by at least a certain fixed distance. Thus we also have the so-called barycentric subdivision 
(see Figure \ref{bssfigure})
\begin{align}\label{bary}
A= \bigsqcup_{ K\subsetneqq \{0,\ldots,r\} } \mathcal{N}_K.
\end{align}

\begin{figure}
\scalebox{0.5}[0.5]
{

\begin{tikzpicture}

\node at (4,2.3094) [circle,fill,inner sep=1.5pt]{};

\node at (2,3.464) [circle,fill,inner sep=1.5pt]{};

\node at (6,3.464) [circle,fill,inner sep=1.5pt]{};

\node at (4,0) [circle,fill,inner sep=1.5pt]{};

\node at (0,0) [circle,fill,inner sep=1.5pt]{};

\node at (8,0) [circle,fill,inner sep=1.5pt]{};

\node at (4,6.928) [circle,fill,inner sep=1.5pt]{};

\draw (0, 0) -- (4, 6.928);
\draw (0,0) -- (8, 0);
\draw  (4, 6.928) -- (8, 0);

\draw (0.7217,1.25) -- (7.2783,1.25);
\draw (6.5566,0) -- (3.2783,5.6782);
\draw (4.7217, 5.6782) -- (1.4434,0);

\draw[<->] (3.5, 0) -- (3.5, 1.25); 
\node [right] at (3.5, 0.625) {\scalebox{2}{$ c$}};

\node [below] at (4, -0.5) {\scalebox{2}{(a)}};

\end{tikzpicture}

\hspace{1cm}
\begin{tikzpicture}

\draw (0, 0) -- (4, 6.928);
\draw (0,0) -- (8, 0);
\draw  (4, 6.928) -- (8, 0);

\draw (0.2887,0.5) -- (7.7113,0.5);
\draw (7.423,0) -- (3.7113,6.428);
\draw (4.2887, 6.428) -- (0.577,0);

\draw[<->] (4.5, 0) -- (4.5, 0.5); 
\node [right] at (4.5, 0.25) {\scalebox{2}{$  N^{-1}$}};

\node [below] at (4, -0.5) {\scalebox{2}{(b)}};

\end{tikzpicture}
\hspace{1cm}

\begin{tikzpicture}

\draw (0, 0) -- (4, 6.928);
\draw (0,0) -- (8, 0);
\draw  (4, 6.928) -- (8, 0);

\node at (4,2.3094) [circle,fill,inner sep=1.5pt]{};

\node at (2,3.464) [circle,fill,inner sep=1.5pt]{};

\node at (6,3.464) [circle,fill,inner sep=1.5pt]{};

\node at (4,0) [circle,fill,inner sep=1.5pt]{};

\node at (0,0) [circle,fill,inner sep=1.5pt]{};

\node at (8,0) [circle,fill,inner sep=1.5pt]{};

\node at (4,6.928) [circle,fill,inner sep=1.5pt]{};

\draw (0.2887,0.5) -- (7.7113,0.5);
\draw (7.423,0) -- (3.7113,6.428);
\draw (4.2887, 6.428) -- (0.577,0);

\draw[<->] (4.5, 0) -- (4.5, 0.5); 
\node [right] at (4.5, 0.25) {\scalebox{2}{$  N^{-1}$}};

\draw (0.7217,1.25) -- (7.2783,1.25);
\draw (6.5566,0) -- (3.2783,5.6782);
\draw (4.7217, 5.6782) -- (1.4434,0); 

\draw[-] (2.8, 3.1) -- (1.2, 3.1); 

\node [left] at (1.2, 3.1)
{\scalebox{2}{$P_{K,J}$}};

\draw[<->] (3.5, 0) -- (3.5, 1.25); 
\node [right] at (3.5, 0.625) {\scalebox{2}{$ c$}};

\node at (4,0) [circle,fill,inner sep=1.5pt]{};

\node at (0,0) [circle,fill,inner sep=1.5pt]{};

\node at (8,0) [circle,fill,inner sep=1.5pt]{};

\node at (4,6.928) [circle,fill,inner sep=1.5pt]{};

\node [below] at (4, -0.5) {\scalebox{2}{(c)}};

\draw [line width=2 pt] (2.1651,1.25) -- (1.2987,1.25) ;
\draw [line width=2 pt] (2.1651,1.25) -- (4,4.4284);
\draw [line width=2 pt] (4,4.4284) -- (3.56698, 5.17828); 
\draw [line width=2 pt] (3.56698, 5.17828) -- (1.2987,1.25);

\end{tikzpicture}

}
\caption{(a) Barycentric subdivision \ \  (b) Semiclassical subdivision \\ (c) Barycentric-semiclassical subdivision $A=\bigsqcup P_{K,J}$}
\label{bssfigure}
\end{figure}

\begin{figure}
\scalebox{0.5}[0.5]
{

\begin{tikzpicture}

\draw (0, 0) -- (4, 6.928);
\draw (0,0) -- (8, 0);
\draw  (4, 6.928) -- (8, 0);

\node at (4,2.3094) [circle,fill,inner sep=1.5pt]{};

\node at (2,3.464) [circle,fill,inner sep=1.5pt]{};

\node at (6,3.464) [circle,fill,inner sep=1.5pt]{};

\node at (4,0) [circle,fill,inner sep=1.5pt]{};

\node at (0,0) [circle,fill,inner sep=1.5pt]{};

\node at (8,0) [circle,fill,inner sep=1.5pt]{};

\node at (4,6.928) [circle,fill,inner sep=1.5pt]{};

\draw (0.2887,0.5) -- (7.7113,0.5);
\draw (7.423,0) -- (3.7113,6.428);
\draw (4.2887, 6.428) -- (0.577,0);

\draw[<->] (4.5, 0) -- (4.5, 0.5); 
\node [right] at (4.5, 0.25) {\scalebox{2}{$  N^{-1}$}};

\draw (0.7217,1.25) -- (7.2783,1.25);
\draw (6.5566,0) -- (3.2783,5.6782);
\draw (4.7217, 5.6782) -- (1.4434,0);

\draw[<->] (3.5, 0) -- (3.5, 1.25); 
\node [right] at (3.5, 0.625) {\scalebox{2}{$ c$}};

\node at (4,0) [circle,fill,inner sep=1.5pt]{};

\node at (0,0) [circle,fill,inner sep=1.5pt]{};

\node at (8,0) [circle,fill,inner sep=1.5pt]{};

\node at (4,6.928) [circle,fill,inner sep=1.5pt]{};

\draw [red] plot [smooth] coordinates {(0,0)
(1.2987-0.7217+0.2887,0.5) (2.1651,1.25) 
(4,3)
(4,4)
(3,4)
(0.85,1.25) 
(0.32,0.5)
(0,0)
};

\draw[-] (-0.45, 1.75/2) -- (0.55, 1.75/2); 
\node [left] at 
(-0.45, 1.75/2) {\scalebox{2}{$S_2$}};

\draw[-] (0.7, 2.6) -- (1.8, 2.6); 
\node [left] at 
(0.7, 2.6) {\scalebox{2}{$S_3$}};

\draw[-] (2.3, 5) -- (3.3, 4.2); 
\node [left] at 
(2.3, 5.2) {\scalebox{2}{$S_4$}};

\draw[-] (4.3, 2) -- (3.3, 2.2); 
\node [right] at 
(4.3, 2)  {\scalebox{2}{$S_5$}};

\draw[-] (1.5, 0.8) -- (2, -0.5); 
\node [below] at 
(2.2, -0.5)  {\scalebox{2}{$S_6$}};

\draw[-] (0.4, 0.2) -- (0.4, -0.5); 
\node [below] at 
(0.4, -0.5)  {\scalebox{2}{$S_7$}};

\draw[-] (0.14, 0.25) -- (-0.6, 0.25); 
\node [left] at 
(-0.6, 0.25) {\scalebox{2}{$S_1$}};

\end{tikzpicture}

}
\caption{A submanifold $S=\bigsqcup_{1\leq i\leq 7} S_i$ of the alcove}
\label{curve}
\end{figure}

\begin{rem}
This barycentric subdivision is chosen to be of a different form from that in \cite{Zha23} for the purpose of clearer exposition. It is also quite different from the standard operation that bears the same name in algebraic topology. We choose to abuse the terminology here; we will eventually also apply the standard barycentric subdivision of a cone as in \eqref{PIJcapNIsubset}, which will be built upon the above barycentric-semiclassical subdivision.  
\end{rem}

The following lemma is clear from the definition of $\mathcal{N}_K$. 

\begin{lem} \label{NKaway}
For $K\subsetneqq \{0,\ldots,r\}$, $\mathcal{N}_{K}$ stays a fixed distance away from all the root hyperplanes $\mathfrak{t}_{\alpha,n}$ with $\alpha\notin \Sigma_{K}$. 
\end{lem}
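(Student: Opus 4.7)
The plan is to extract the lemma directly from the inductive construction of the $\mathcal{N}_K$'s. First, I would verify that the set
$$C_K := A_K\setminus \bigcup_{K\subsetneqq K'\subsetneqq\{0,\ldots,r\}}\mathcal{N}_{K'}$$
is compact, using that $\overline{A_K}\setminus A_K=\bigsqcup_{K'\supsetneqq K}A_{K'}$ is already covered by the open set $\bigcup_{K\subsetneqq K'\subsetneqq\{0,\ldots,r\}}\mathcal{N}_{K'}$ as a byproduct of the barycentric cover inclusions established just above. This makes $C_K$ a closed subset of the compact simplex $\overline{A_K}$. Since $\mathcal{N}_K$ is by construction a small open neighborhood of $C_K$, everything reduces to bounding from below the distance between $C_K$ and each forbidden hyperplane, and then shrinking $\mathcal{N}_K$ accordingly.

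The key geometric step is to show that for every $\alpha\notin\Sigma_K$ and every $n\in\mathbb{Z}$, the hyperplane $\mathfrak{p}_{\alpha,n}$ is in fact disjoint from the open facet $A_K$. Since $A$ is a connected component of $\mathfrak{t}\setminus \bigcup_{\beta,m}\mathfrak{p}_{\beta,m}$, the affine function $H\mapsto \alpha(H)/(2\pi i)+n$ has a constant sign on $\overline{A}$, hence on $\overline{A_K}$; say it is $\geq 0$. The assumption $\alpha\notin\Sigma_K$ together with the observation already recorded in the text forces $\mathfrak{p}_{\alpha,n}$ not to contain $A_K$, so this affine function is not identically zero on the affine span $\mathcal{A}_K$ of $A_K$. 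A nonzero nonnegative affine function on $\mathcal{A}_K$ cannot vanish at any point of the relatively open subset $A_K\subset\mathcal{A}_K$, for such a zero would be an interior minimum of a nonconstant affine function. Hence $\mathfrak{p}_{\alpha,n}\cap A_K=\varnothing$, and a fortiori $\mathfrak{p}_{\alpha,n}\cap C_K=\varnothing$.

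To conclude, I would note that for each fixed root $\alpha$ only finitely many of the parallel hyperplanes $\mathfrak{p}_{\alpha,n}$ lie within bounded distance of the compact set $\overline{A}$, and there are only finitely many roots, so altogether only finitely many hyperplanes $\mathfrak{p}_{\alpha,n}$ with $\alpha\notin\Sigma_K$ can possibly come near $\mathcal{N}_K$. By compactness of $C_K$ together with the disjointness just proved, the quantity $c:=\inf \mathrm{dist}(C_K,\mathfrak{p}_{\alpha,n})$ taken over this finite list of hyperplanes is strictly positive. Requiring $\mathcal{N}_K$ to lie inside the $(c/2)$-tubular neighborhood of $C_K$ in $\mathfrak{t}$ then yields the lemma with uniform lower bound $c/2$. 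The only nonroutine input in the whole argument is the disjointness claim of the second paragraph; everything else is soft compactness combined with the freedom to shrink $\mathcal{N}_K$ in the inductive construction.
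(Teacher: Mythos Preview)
Your proposal is correct and follows essentially the same route as the paper: the paper's argument (given in the paragraph immediately preceding the lemma) likewise uses that $C_K=A_K\setminus\bigcup_{K'\supsetneqq K}\mathcal{N}_{K'}$ is bounded away from the boundary facets of $A_K$, that $\mathcal{N}_K$ can be taken close to $A_K$, and that hyperplanes $\mathfrak{p}_{\alpha,n}$ with $\alpha\notin\Sigma_K$ do not contain $A_K$. Your affine-function/interior-minimum argument for the disjointness $\mathfrak{p}_{\alpha,n}\cap A_K=\varnothing$ is a clean way to make explicit what the paper leaves implicit, but it is the same mechanism rather than a different strategy.
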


Now we combine the semiclassical and barycentric subdivision. The following lemma is also clear from definition. 

\begin{lem} \label{JsubsetK}
For $N$ large enough, we have 
$\mathcal{N}_K\cap P_J=\emptyset$ unless $J\subset K$. 
\end{lem}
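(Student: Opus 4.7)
The plan is to deduce the lemma directly from Lemma \ref{NKaway} by a pigeonhole-style observation: if $J\not\subset K$, then any index $j_0\in J\setminus K$ gives a coordinate $t_{j_0}$ that is forced to be small ($\leq N^{-1}$) on $P_J$ but remains bounded below by an $N$-independent constant on $\mathcal{N}_K$, and these constraints become incompatible once $N$ is large.

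First I would translate the combinatorial condition ``$j\notin K$'' into the geometric hypothesis of Lemma \ref{NKaway}. Since $\Sigma_K$ is the parabolic subsystem with simple system $\{\alpha_k:k\in K\}$, no simple root $\alpha_j$ with $j\notin K$ belongs to $\Sigma_K$. Consequently, for every $j\notin K$ the wall $\mathfrak{p}_{\alpha_j,\delta_{0j}}$ satisfies the hypothesis of Lemma \ref{NKaway}, and there exists a constant $c_{K,j}>0$, independent of $N$, such that $t_j(H)\geq c_{K,j}$ for every $H\in\mathcal{N}_K\cap A$ (the sign is determined since $t_j\geq 0$ on $A$).

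Next, assume $J\not\subset K$ and fix some $j_0\in J\setminus K$. If $H\in P_J$, the defining inequalities give $t_{j_0}(H)\leq N^{-1}$, while if $H\in\mathcal{N}_K$ the previous step yields $t_{j_0}(H)\geq c_{K,j_0}$. These bounds are incompatible as soon as $N^{-1}<c_{K,j_0}$. Taking $N$ large enough to dominate $1/c_{K,j}$ uniformly over the finitely many pairs $(K,j)$ with $K\subsetneqq\{0,\ldots,r\}$ and $j\notin K$ forces $\mathcal{N}_K\cap P_J=\emptyset$ whenever $J\not\subset K$, which is the claim.

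The only delicate point is the invocation of Lemma \ref{NKaway} with the guarantee that the lower bound $c_{K,j}$ is truly independent of $N$; this is built into the construction of the cover, since the neighborhoods $\mathcal{N}_K$ are chosen once and for all from the geometry of $A$ with no reference to $N$. Beyond that, the argument is a one-line comparison of two scales, and no further estimate is needed.
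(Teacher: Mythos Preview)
Your approach is correct and is essentially what the paper has in mind---the paper's own proof is simply ``This is clear from construction.'' However, there is a small gap in your invocation of Lemma~\ref{NKaway}. The claim that ``no simple root $\alpha_j$ with $j\notin K$ belongs to $\Sigma_K$'' fails when $|K|=r$: in that case $\{\alpha_k:k\in K\}$ is already a full simple system for $\Sigma$, so $\Sigma_K=\Sigma$, and the unique $\alpha_{j_0}$ with $j_0\notin K$ certainly lies in $\Sigma_K$. Thus Lemma~\ref{NKaway}, whose hypothesis is $\alpha\notin\Sigma_K$, does not directly apply to the wall $\mathfrak{p}_{\alpha_{j_0},\delta_{0j_0}}$ in this edge case.

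The fix is immediate: cite instead the slightly stronger assertion recorded in the paragraph just before Lemma~\ref{NKaway}, namely that $\mathcal{N}_K$ stays a fixed distance from every root hyperplane \emph{not containing $A_K$}. For $j\notin K$ the wall $\mathfrak{p}_{\alpha_j,\delta_{0j}}=\{t_j=0\}$ never contains $A_K$, since $t_j>0$ on $A_K$ by definition of the facet. This gives the required $N$-independent lower bound $t_j\geq c_{K,j}>0$ on $\mathcal{N}_K\cap A$ in all cases, and the rest of your argument then goes through unchanged.
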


For $J\subset K\subsetneqq \{0,\ldots,r\}$, define 
$$P_{K,J}=\mathcal{N}_K \cap P_J.$$
By \eqref{semi}, \eqref{bary}, and Lemma \ref{JsubsetK}, 
we have:

\begin{lem}[Barycentric-semiclassical subdivision] 
\label{BSS}
$$A=\bigsqcup_{ J\subset K\subsetneqq \{0,\ldots,r\} } P_{K,J}.$$
\end{lem}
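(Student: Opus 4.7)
The plan is to deduce this directly from the three ingredients that have just been assembled: the disjoint semiclassical decomposition $A=\bigsqcup_J P_J$ in \eqref{semi}, the barycentric cover $A\subset\bigcup_K\mathcal{N}_K$ in \eqref{bary}, and Lemma \ref{JsubsetK}, which for $N$ large enough guarantees that $\mathcal{N}_K\cap P_J$ can be nonempty only when $J\subset K$.

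The inclusion $\bigcup_{J\subset K\subsetneqq\{0,\ldots,r\}}P_{K,J}\subset A$ is tautological, since by definition $P_{K,J}=\mathcal{N}_K\cap P_J\subset P_J\subset A$. For the reverse inclusion, I would pick an arbitrary $H\in A$ and chase the two covers. The semiclassical decomposition \eqref{semi} places $H$ in exactly one piece $P_J$, for some uniquely determined $J\subsetneqq\{0,\ldots,r\}$. The barycentric cover \eqref{bary} then produces (at least one) $K\subsetneqq\{0,\ldots,r\}$ with $H\in\mathcal{N}_K$. Because this $K$ witnesses $\mathcal{N}_K\cap P_J\neq\emptyset$, Lemma \ref{JsubsetK} forces $J\subset K$. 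Hence $H\in P_{K,J}$ for a pair $(J,K)$ with $J\subset K$, which is exactly what the lemma asserts.

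There is essentially no hard step to flag here: all of the geometric content has already been absorbed into the inductive construction of the neighborhoods $\mathcal{N}_K$ (so that each $\mathcal{N}_K$ stays near $A_K$ but a fixed distance from every $A_{K'}$ with $K'\not\subset K$, cf.\ Lemma \ref{NKaway}) and into Lemma \ref{JsubsetK}; the present statement is a bookkeeping consequence. The only subtlety worth keeping in mind is that the cover is genuinely not disjoint — a point lying simultaneously near several facets will belong to several $\mathcal{N}_K$, and hence to several $P_{K,J}$ with the same $J$ but varying $K\supset J$. This is consistent with the statement being written as $\bigcup$ rather than $\bigsqcup$, and it explains why one must allow any $K$ containing $J$ (not just $K=J$) in order to cover $A$. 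The largeness assumption on $N$ is inherited implicitly from Lemma \ref{JsubsetK} and is harmless for the intended asymptotic application.
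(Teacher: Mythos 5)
Your argument is exactly the one the paper intends: combine the disjointness of the semiclassical pieces \eqref{semi}, the barycentric cover \eqref{bary}, and Lemma~\ref{JsubsetK} to conclude each $H\in A$ lies in some $P_{K,J}$ with $J\subset K$. The paper leaves this as an immediate consequence of the three cited ingredients; your write-up simply supplies the (correct) one-line element chase.
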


\subsection{Behavior of characters across the alcove} \label{sectiondecomposition}
We give a formula of the character that would illuminate its behavior on each piece of the barycentric-semiclassical subdivision. 
Let $(\cdot,\cdot)$ denote the Killing form on $\mathfrak{u}$. Restricted on  
$\mathfrak{t}$, the Killing form gives the induced standard flat metric on $T$. The Killing form also extends to $\mathfrak{t}^*$ by duality and to $i\mathfrak{t}^*$ by linear extension. Let $|\cdot|$ denote the induced norm on $\mathfrak{t}$ and $\mathfrak{t}^*$ respectively, on which the Weyl group $W$ acts by isometries. 
The weight lattice reads 
$$\Lambda:=\left\{\mu\in i\mathfrak{t}^*:\ \frac{2(\mu,\alpha)}{(\alpha,\alpha)}\in\Z, \ \forall\alpha\in\Sigma\right\}.$$ The action of $W$ on $i\mathfrak{t}^*$ leaves $\Lambda$ invariant. 
 
Associated to any positive system $\Sigma^+$ of $\Sigma$ is the subset  
$$\Lambda^+:=\left\{\mu\in i\mathfrak{t}^*:\ \frac{2(\mu,\alpha)}{(\alpha,\alpha)}\in\mathbb{Z}_{\geq 1}, \ \forall\alpha\in\Sigma^+\right\}$$
of strictly dominant weights. We also set
$$\Lambda^\text{r}:=\left\{\mu\in \Lambda:\ \frac{2(\mu,\alpha)}{(\alpha,\alpha)}\neq 0, \ \forall\alpha\in\Sigma\right\}$$
to be the subset of regular weights. 
Let 
$$\rho:=\frac{1}{2}\sum_{\alpha\in\Sigma^+}\alpha$$
be the Weyl vector. 
Each $\mu\in\Lambda^+$ is associated with an irreducible representation of $U$ of highest weight $\mu-\rho$, and the associated character $\chi_\mu$ can be expressed by the Weyl formula 
\begin{align}\label{Weylcharacter}
\chi_\mu(\exp H)=\frac{\sum_{s\in W}\det s \ e^{(s\mu)(H)}}{\sum_{s\in W}\det s \ e^{(s\rho)(H)}}, \ \t{for }H\in\mathfrak{t}.
\end{align}
The Weyl denominator
$$\delta(H):=\sum_{s\in W}\det s \ e^{(s\rho)(H)}$$ can also be expressed as follows
\begin{align}\label{alphadelta}
\delta(H)=\prod_{\alpha\in\Sigma^+}\left(e^{\frac{\alpha(H)}{2}}-e^{-\frac{\alpha(H)}{2}}\right), \ \t{for }H\in\mathfrak{t}.
\end{align}
Evaluating the character at the identity, we get the Weyl formula for the dimension of the representation 
$$d_\mu=\frac{\prod_{\alpha\in\Sigma^+}(\alpha,\mu)}{\prod_{\alpha\in\Sigma^+}(\alpha,\rho)}.$$
Observe that the Weyl character formula \eqref{Weylcharacter} makes sense for any $\mu\in i\mathfrak{t}^*$; in particular, for $\mu\in\Lambda^+$ and $s\in W$, we have 
$$\chi_{s\mu}=\det s \cdot \chi_\mu.$$

We now factor the Weyl denominator. For any $J\subsetneqq \{0,1,\ldots,r\}$, set 
\begin{align}\label{deltaupperJ}\delta^J( H):=\prod_{\alpha\in\Sigma_J^+}\left(e^{\frac{\alpha(H)}{2}}-e^{-\frac{\alpha(H)}{2}}\right),
\end{align}
\begin{align}\label{deltalowerJ}
\delta_J( H):=\prod_{\alpha\in\Sigma^{+}\setminus\Sigma^+_J}\left(e^{\frac{\alpha(H)}{2}}-e^{-\frac{\alpha(H)}{2}}\right),
\end{align}
so that 
\begin{align}\label{factorWeyld}
\delta(H)=\delta_{J}(H)\cdot\delta^J(H)
\end{align}
for all $H\in\mathfrak{t}$. 
Here the positive system $\Sigma^+$ of $\Sigma$ is chosen as to contain $\Sigma_J^+$. We remark that later when we will be using Lemma \ref{BSS} the barycentric-semiclassical subdivision of the Weyl alcove, for each fixed pair $J\subset K\subsetneqq \{0,1,\ldots,r\}$, we will choose a positive system $\Sigma^+$ that contains $\Sigma^+_K$. For example, one can choose $\Sigma^{+}$ to be the set of all roots that are positive in the lexicographic ordering induced by the basis $\{\alpha_j,\ j\in I\}$ of $\Sigma$ for any $I$ containing $K$ ($I\subset \{0,1,\ldots,r\}$, $|I|=r$). Now for any $J\subset K\subsetneqq \{0,1,\ldots,r\}$, set 
\begin{align}\label{deltaJK}
\delta_{J}^K( H):=\prod_{\alpha\in \Sigma_K^+\setminus \Sigma^+_J}\left(e^{\frac{\alpha(H)}{2}}-e^{-\frac{\alpha(H)}{2}}\right).
\end{align}

We now study the behavior of $\chi_\mu$ near each facet of $A$. 
Consider the subspace 
\begin{align}\label{tJ}
\mathfrak{t}^J:=\bigoplus_{j\in J}\R H_{\alpha_j}
\end{align} 
of $\mathfrak{t}$, where $H_{\alpha_j}\in\mathfrak{t}$ is defined such that $(H_{\alpha_j},H):=\alpha_j(H)/2\pi i$ for all $H\in\mathfrak{t}$. Let $H^J$ denote the orthogonal projection of $H\in\mathfrak{t}$ on $\mathfrak{t}^J$ with respect to the Killing form. Let 
\begin{align}\label{HJperp}
H^{J\perp}:=H-H^J,
\end{align}
which lies in the orthogonal complement 
\begin{align}\label{tJperp}
\mathfrak{t}^{J\perp}:=\mathfrak{t}\ominus\mathfrak{t}^J
\end{align}
of $\mathfrak{t}^J$ in $\mathfrak{t}$. 
Dual to $\mathfrak{t}^J$, we also consider the root subspace $i\mathfrak{t}^{*J}=\bigoplus_{j\in J}\R\alpha_j$
of $i\mathfrak{t}^*$. For $\mu\in\Lambda$, let $\mu^J$ denote the orthogonal projection of $\mu$ on $i\mathfrak{t}^{*J}$. 
For $\gamma\in i\mathfrak{t}^{*J}$, let 
$$\chi^J_\gamma(\exp H^J)
:=\frac{\sum_{s_J\in W_J}\det s_J\ e^{(s_J\gamma)(H^J)}}{\delta^J(H^J)}$$
be the associated Weyl character. 
We then have the following key formula of characters.

\begin{lem}[The key character formula]\label{lemdecomposition}
For any $H\in\mathfrak{t}$ and $\mu\in i\mathfrak{t}^*$, we have 
\begin{align}\label{char}
\chi_\mu(\exp H)=\frac{1}{|W_J| \delta_J( H)}\sum_{s\in W}\det s\ e^{(s\mu)(H^{J\perp})}\chi^J_{(s\mu)^J}(\exp H^J).
\end{align}
\end{lem}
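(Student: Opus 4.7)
The plan is to work backward from the right-hand side of \eqref{char}, expanding $\chi^J_{(s\mu)_J}(\exp H_J)$ by its Weyl formula and reorganizing the resulting double sum over $W\times W_J$ into the Weyl character formula \eqref{Weylcharacter} for $\chi_\mu(\exp H)$.

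The structural fact I use is that $W_J$ acts trivially on the orthogonal complement $(i\mathfrak{t}^*_J)^\perp$ of $i\mathfrak{t}^*_J$: indeed $W_J$ is generated by reflections in the simple roots $\{\alpha_j:j\in J\}$, and each such root is orthogonal to $(i\mathfrak{t}^*_J)^\perp$. Consequently, for any $\lambda\in i\mathfrak{t}^*$ and any $s_J\in W_J$, the orthogonal decomposition with respect to $i\mathfrak{t}^*_J$ obeys $(s_J\lambda)_J=s_J(\lambda_J)$ and $s_J\lambda-(s_J\lambda)_J=\lambda-\lambda_J$. Moreover, the Killing duality identifies the decomposition $\mathfrak{t}=\mathfrak{t}_J\oplus\mathfrak{t}_J^\perp$ with $i\mathfrak{t}^*=i\mathfrak{t}^*_J\oplus (i\mathfrak{t}^*_J)^\perp$, so forms in $i\mathfrak{t}^*_J$ annihilate $\mathfrak{t}_J^\perp$ while forms in $(i\mathfrak{t}^*_J)^\perp$ annihilate $\mathfrak{t}_J$. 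Combining these observations yields the key identity
\begin{equation*}
(s_J s\mu)(H)=(s_J(s\mu)_J)(H_J)+(s\mu)(H_J^\perp)\qquad\text{for all }s\in W,\ s_J\in W_J,
\end{equation*}
because on the left $(s_J s\mu)_J=s_J((s\mu)_J)$ contributes on $H_J$ only, while $(s_J s\mu)_J^\perp=(s\mu)_J^\perp$ contributes on $H_J^\perp$ only, and the latter equals $(s\mu)(H_J^\perp)$.

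Next, since every root of $\Sigma_J$ vanishes on $\mathfrak{t}_J^\perp$, we have $\delta^J(H)=\delta^J(H_J)$; combined with \eqref{factorWeyld} this gives $\delta_J(H)\,\delta^J(H_J)=\delta(H)$. Substituting the definition of $\chi^J_{(s\mu)_J}$ into the right-hand side of \eqref{char} turns it into
\begin{equation*}
\frac{1}{|W_J|\,\delta(H)}\sum_{s\in W}\sum_{s_J\in W_J}\det(s_J s)\,e^{(s_J(s\mu)_J)(H_J)+(s\mu)(H_J^\perp)},
\end{equation*}
and the key identity lets me replace the exponent by $(s_J s\mu)(H)$. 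Reparametrizing by $s'=s_J s$, each $s'\in W$ arises exactly $|W_J|$ times as $(s_J,s)$ varies over $W_J\times W$, so the double sum collapses to $|W_J|\sum_{s'\in W}\det s'\, e^{(s'\mu)(H)}$; dividing by $|W_J|\,\delta(H)$ recovers $\chi_\mu(\exp H)$ via \eqref{Weylcharacter}.

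The argument is essentially a formal manipulation, and I do not anticipate any serious obstacle. The only point that requires care is the bookkeeping underlying the key identity, namely that $s_J$ commutes with projection to $i\mathfrak{t}^*_J$ and that the orthogonal decompositions on the Lie algebra and its dual are compatible under Killing duality; no deeper ingredients beyond the Weyl character formula and these two orthogonality observations are needed.
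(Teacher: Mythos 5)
Your proof is correct, and it is essentially the same argument the paper points to (the paper itself simply defers to Lemma 4.2 of \cite{Zha23}, describing it as a consequence of the Weyl character formula and the factorization \eqref{factorWeyld}). You have filled in the details cleanly: the two orthogonality observations — that $W_J$ fixes $(i\mathfrak{t}^*_J)^\perp$ pointwise and preserves $i\mathfrak{t}^*_J$, and that the Killing-dual decompositions of $\mathfrak{t}$ and $i\mathfrak{t}^*$ are compatible so that $\delta^J(H)=\delta^J(H_J)$ and the exponent splits as claimed — are exactly the structural facts needed, and the reindexing $s'=s_Js$ with multiplicity $|W_J|$ closes the computation.
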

\begin{proof}
This is Lemma 4.2 of \cite{Zha23}. For completeness, we recall the proof. First, we rewrite the Weyl character formula as follows: for $H\in\mathfrak{t}$, we have 
\begin{align*}
\chi_\mu(\exp H)=\frac{\sum_{s\in W}\sum_{s_J\in W_J}\det (s_Js) \ e^{(s_J s\mu)(H)}}{|W_J|\prod_{\alpha\in \Sigma^+\setminus \Sigma^+_J}\left(e^{\frac{\alpha(H)}{2}}-e^{-\frac{\alpha(H)}{2}}\right)\prod_{\alpha\in\Sigma^+_J}\left(e^{\frac{\alpha(H)}{2}}-e^{-\frac{\alpha(H)}{2}}\right)}.
\end{align*}
Now write $H=H^J+H^{J\perp}$, we have for $s\in W$ and $s_J\in W_J$ that 
$$(s_J s\mu)(H)
=(s_J s\mu)(H^J)+(s_J s\mu)(H^{J\perp}).$$
But 
$$(s_J s\mu)(H^J)=(s_J(s\mu)^J)(H^J)$$
because
$(s\mu-(s\mu)^J)(H^J)=0$ by definition, while  
$$(s_J s\mu)(H^{J\perp})=(s\mu)(s_J^{-1} H^{J\perp})=(s\mu)(H^{J\perp})$$
because $s_J^{-1}$ as an element of $W_J$ fixes any point on $\mathfrak{t}^{J\perp}$. Also for $\alpha\in\Sigma_J$, 
$$\alpha(H)=\alpha(H^J)$$
because 
$\alpha(H^{J\perp})=0$. Combining the above identities we get 
$$\chi_\lambda=\frac{\sum_{s\in W}\det s\ e^{(s\mu)(H^{J\perp})}\sum_{s_J\in W_J}\det s_J e^{(s_J(s\mu)^J)(H^J)}}{|W_J|\prod_{\alpha\in \Sigma^+\setminus \Sigma^+_J}\left(e^{\frac{\alpha(H)}{2}}-e^{-\frac{\alpha(H)}{2}}\right)\prod_{\alpha\in\Sigma^+_J}\left(e^{\frac{\alpha(H^J)}{2}}-e^{-\frac{\alpha(H^J)}{2}}\right)}$$
which is \eqref{char}.
\end{proof}

We would need the following character bound.

\begin{lem}\label{charbound}
For $\mu\in\Lambda^+$, $s\in W$, $J\subsetneqq\{0,\ldots,r\}$, we have 
$$|\chi^J_{(s\mu)^J}(\exp H^J)|\lesssim |\mu|^{|\Sigma^+_J|},\ \t{for all }H\in \mathfrak{t}.$$
\end{lem}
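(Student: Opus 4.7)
My plan is to recognize $\chi^J_{\mu_J}$, up to a sign coming from the Weyl group $W_J$, as an irreducible character of the compact simply connected Lie group $U_J$ whose root system is $\Sigma_J$, after which the bound follows from the pointwise inequality $|\chi_V|\le\dim V$ together with the Weyl dimension formula.

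The first step would be to verify that $\mu_J$ lies in the weight lattice associated to $\Sigma_J$. Because every $\alpha\in\Sigma_J$ already belongs to $i\mathfrak{t}_J^*$, the orthogonal projection onto $i\mathfrak{t}_J^*$ fixes $\alpha$, giving $(\mu_J,\alpha)=(\mu,\alpha)$ for all $\alpha\in\Sigma_J$. The integrality $2(\mu,\alpha)/(\alpha,\alpha)\in\Z$ coming from $\mu\in\Lambda$ therefore transfers intact to $\mu_J$, so $\mu_J$ is an integral weight for $\Sigma_J$.

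Next, I would exploit the $W_J$-antisymmetry of the numerator of $\chi^J_{\mu_J}$, which yields $\chi^J_{s\mu_J}=\det(s)\,\chi^J_{\mu_J}$ for every $s\in W_J$, to reduce to the case in which $\mu_J$ lies in the closed dominant Weyl chamber for $\Sigma_J^+$. If the reduced $\mu_J$ lies on a wall, then $\chi^J_{\mu_J}\equiv 0$ and the bound is vacuous. Otherwise $\mu_J-\rho^J$ is a dominant integral weight of $\Sigma_J$, and reading the Weyl character formula in reverse identifies $\chi^J_{\mu_J}$ with the character of the irreducible representation of $U_J$ of highest weight $\mu_J-\rho^J$, evaluated at $\exp H_J\in T_J\subset U_J$.

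The final step would be to apply the trivial unitary bound $|\chi^J_{\mu_J}(\exp H_J)|\le\chi^J_{\mu_J}(e)=\dim V_{\mu_J-\rho^J}$ and to invoke the Weyl dimension formula
$$\dim V_{\mu_J-\rho^J}=\prod_{\alpha\in\Sigma_J^+}\frac{(\mu_J,\alpha)}{(\rho^J,\alpha)},$$
which is a polynomial in $\mu_J$ of degree $|\Sigma_J^+|$ and hence bounded by a constant times $|\mu_J|^{|\Sigma_J^+|}\le|\mu|^{|\Sigma_J^+|}$. The hypothesis $H\in P_J$ is not actually needed for this line of argument, which gives a bound uniform over all of $T_J$; it appears in the statement only because this is the piece of the barycentric-semiclassical cover on which the lemma will be invoked. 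The only delicate point is the initial integrality check, which depends on consistently using the Killing form both to define $\Lambda$ and to perform the orthogonal projection $\mu\mapsto\mu_J$.
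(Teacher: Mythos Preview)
Your proof is correct and, for the regular case, proceeds exactly as the paper does: identify $\chi^J_{\mu_J}$ as an irreducible character of the simply connected group with root system $\Sigma_J$ and bound it by the Weyl dimension formula. Your verification that $\mu_J$ lies in the weight lattice of $\Sigma_J$ (via $(\mu_J,\alpha)=(\mu,\alpha)$ for $\alpha\in\Sigma_J$) is the right integrality check, and your observation that the hypothesis $H\in P_J$ is not used is accurate.

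The one genuine difference is your treatment of the singular case. You observe that if $\mu_J$ lies on a wall of the $W_J$-chamber then the alternating numerator $\sum_{s\in W_J}\det s\,e^{s\mu_J}$ vanishes identically, so $\chi^J_{\mu_J}\equiv 0$ and the bound is trivial. The paper instead defers this case to the Harish-Chandra integral formula (citing Lemma~6.6 of \cite{Zha23}). Your argument is more elementary and entirely sufficient here, since $\mu\in\Lambda$ guarantees $\mu_J$ is integral for $\Sigma_J$; the Harish-Chandra route has the advantage of applying uniformly to arbitrary $\mu\in i\mathfrak{t}^*$ (as in the remark following \eqref{alphadelta}), which is presumably why the referenced lemma is stated that way, but that extra generality is not needed for the present statement.
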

\begin{proof}
As $s\mu$ is regular, 
$(s\mu)^J$ is regular with respect to the root subsystem $\Sigma_J$. This implies that there exists $s_J\in W_J$ such that $s_J(s\mu)^J$ lies in $\Lambda_J^+$, the set of strictly dominant weights with respect to the positive system
$\Sigma_J^+$ of $\Sigma_J$. Associated to $s_J(s\mu)^J$ is the character $\chi^J_{s_J(s\mu)^J}$ of the irreducible representation of the simply connected compact Lie group of root system $\Sigma_J$. Then we have 
$$\chi^J_{(s\mu)^J}=\det s_J \cdot \chi^J_{s_J(s\mu)^J}.$$
Now $|\chi^J_{s_J(s\mu)^J}|$ is bounded above by the associated dimension of representation 
$$d^J_{s_J(s\mu)^J}=\frac{\prod_{\alpha\in\Sigma_J^+}(\alpha, s_J(s\mu)^J)}{\prod_{\alpha\in\Sigma_J^+}(\alpha,\rho)}$$
which is clearly bounded by a constant multiple of $|\mu|^{|\Sigma_J^+|}$. The result follows. 
\end{proof}

\section{Peeling the root system}
\label{peelingrs}

In this section we demonstrate a  combinatorial rigidity in the arrangement of root hyperplanes. Let us order the distances from the Weyl alcove walls near a vertex so that 
$$t_{j_r}(H)\leq t_{j_{r-1}}(H)\leq\cdots\leq t_{j_1}(H)$$
where $j_1,\ldots,j_r$ is an $r$-permutation of $\{0,1,\ldots,r\}$. If we peel off those root hyperplanes the distance to which is comparable with the distance $t_{j_1}(H)$, then the remaining root hyperplanes correspond to the parabolic root subsystem $\Sigma_{\{j_1,\ldots,j_{r-1}\}}$. Then we peel off those root hyperplanes the distance to which is comparable with the distance $t_{j_2}(H)$, and so on. The numbers $k/p_k$ in Theorem \ref{characterrestriction} are going to be exactly the number of root hyperplanes one need to peel off from the original irreducible root system $\Sigma$ to get a rank-($r-k$) parabolic subsystem of $\Sigma$ which is of the largest cardinality among all rank-($r-k$) parabolic subsystems. We will demonstrate in the following lemma that actually these numbers $k/p_k$ can all be extracted from ways of ``peeling an irreducible root system most slowly'', and in every step of peeling to get a root subsystem of rank lower by one, one removes fewer roots than the previous step.

\begin{lem}\label{nk}
Let $\Sigma$ be an irreducible root system of rank $r$ and let $\{\alpha_j, \ j=0,1,\ldots,r\}$ be the extended simple system (containing the lowest root $\alpha_0$). Let 
$\mathcal{P}=(j_0, j_1, \ldots, j_{r})$ be any permutation of $\{0,1,\ldots,r\}$.  
For $i=0,1,\ldots,r$, let $I_i=I_i(\mathcal{P})=\{j_{i+1},\ldots,j_r\}$. In particular, $I_r=\emptyset$. Define 
\begin{align}\label{nk=}
n_i=n_i(\mathcal{P}):=|\Sigma^+|-|\Sigma^+_{I_{i}}|,\ i=0,1,\ldots,r
\end{align}
and the ``peeling numbers''
\begin{align}\label{qk=nk-}
q_i=q_i(\mathcal{P})=n_i-n_{i-1}=|\Sigma^+_{I_{i-1}}|-|\Sigma^+_{I_{i}}|,\ i=0,1,\ldots,r
\end{align}
where we have specified $n_{-1}=0$ and $\Sigma^+_{I_{-1}}:=\Sigma^+$.  
Then there exists a permutation $\mathcal{P}_0$ of $\{0,1,\ldots,r\}$, such that for any (other) permutation $\mathcal{P}$ of $\{0,1,\ldots,r\}$, the ``peeling inequality''
\begin{align}\label{peeling}
n_i(\mathcal{P}_0)\leq n_i(\mathcal{P})
\end{align}
holds for all $i=0,1,\ldots,r$. 
Moreover, letting $q_{i,0}=q_i(\mathcal{P}_0)$ ($i=0,1,\ldots,r$), then they are uniquely given in Table \ref{dismantling}, along with 
$$q_{0,0}\equiv 0.$$ 
In particular, an inspection of Table \ref{dismantling} reveals that  
\begin{align}\label{fewer}
q_{1,0}>q_{2,0}>\cdots>q_{r,0}=1.
\end{align}

\begin{center}
\begin{table}
{\renewcommand{\arraystretch}{1.2}
\begin{tabular}{|c|c|c|}
\hline  
$\Sigma$ &  $q_{1,0}$,  $q_{2,0}$, $\ldots$, $q_{r,0}$   
& $j_{1,0}$,  $j_{2,0}$, $\ldots$, $j_{r,0}$  
\\
\hline 
$A_r (r\geq 1)$ &  $r,r-1,\ldots,2,1$
& $1,2,\ldots,r$
\\

\hline 
$B_r (r\geq 2)$ & $2r-1,2r-3,\ldots,3,1$ 
&$1,2,\ldots,r$ \\

\hline 
$C_r (r\geq 3)$ &  $2r-1,2r-3,\ldots,3,1$
 &$1,2,\ldots,r$  \\

\hline 
$D_r (r\geq 4)$&  $2r-2,2r-4,\ldots,10,8,6,3,2,1$   
&   $1,2,\ldots,r-3,r-1,r-2,r$  \\

\hline 
$E_6$& $16,8,6,3,2,1$
& $1,6,2,3,4,5$ \\

\hline 
$E_7$ &  $27,16,8,6,3,2,1$
& $7,1,6,2,3,4,5$ \\

\hline 
$E_8$ &  $57,27,16,8,6,3,2,1$
& $8,7,1,6,2,3,4,5$ \\

\hline 
$F_4$&  $15,5,3,1$ 
& $1,4,2,3$ \\

\hline 
 $G_2$  & $5,1$
& $1,2$ \\

\hline

\end{tabular}
}
\caption{Peeling the root system}
\label{dismantling}

\end{table}
\end{center}

\end{lem}

\begin{proof}
Let $\mathcal{P}=(j_0,\ldots,j_r)$ be a permutation of $\{0,\ldots,r\}$. Consider removing the nodes in the extended Dynkin diagram as in Figure \ref{dynkin} one by one in the order of $j_0,j_{1},\ldots,j_r$. First, for the irreducible root systems a case-by-case inspection reveals that there indeed exist possibly multiple $\mathcal{P}_0$ which realizes the $r$-tuple $q_{1,0},\ldots,q_{r,0}$ in Table \ref{dismantling}; a choice of $\mathcal{P}_0=(j_{0,0},j_{1,0},\ldots,j_{r,0})$ is also given in Table \ref{dismantling} with $j_{0,0}\equiv 0$, using the labeling of the nodes in Figure \ref{dynkin}.  
The inequalities \eqref{peeling} require the desired $\mathcal{P}_0$ give the ``slowest way of peeling the irreducible root system'', so that at each step the remaining number of roots is no smaller than that from any other way. 

Define a ``complexity preorder'' $\leq$ on the set of (equivalence classes of) irreducible root systems using Table \ref{dismantling} as follows. For irreducible root systems $\Sigma$ and $\Sigma'$, suppose their optimal peeling numbers as appearing in the middle column of Table \ref{dismantling} are respectively $q_{1,0},q_{2,0},\ldots,q_{r,0}$ and $q'_{1,0},q'_{2,0},\ldots,q'_{r',0}$. We say $\Sigma\leq \Sigma'$ provided 
$$\sum_{i=0}^j q_{r-i,0}\leq\sum_{i=0}^j  q'_{r'-i,0}$$
for all $j=0,1,\ldots,\min\{r,r'\}-1$. And $\Sigma_1$ and $\Sigma_2$ are said to be equivalent with respect to this preorder, denoted $\Sigma_1\asymp \Sigma_2$, provided $\Sigma_1\leq \Sigma_2$ and $\Sigma_2\leq \Sigma_1$. It is clear from Table \ref{dismantling} that irreducible root systems of the same type ($A,B,C,D,E,F,G$) are equivalent to each other with respective to this complexity preorder, and the comparison between different types are given in Table \ref{complexity}. For example, $A_r\leq \Sigma$ for any irreducible root system $\Sigma$. For the comparison between $B,C$ and $E$, we have $E_6\leq B_r$ ($r\geq 2$), $E_6\leq C_r$ ($r\geq 3$), $E_s\leq B_r$ ($2\leq r\leq 6$, $s=6,7,8$), and $E_s\leq C_r$ ($3\leq r\leq 6$, $s=6,7,8$), but when both $r>6$ and $s>6$,  $E_s$ is not comparable with either $B_r$ or $C_r$.

\begin{center}
\begin{table}
{\renewcommand{\arraystretch}{1.2}
\begin{tabular}{|c|c|c|c|c|c|c|c|}
\hline  
  & $A$ & $B$ & $C$ & $D$ & $E$ & $F$ & $G$  \\
\hline 
$A$ &  $\asymp$    & $\leq$  & $\leq$  &  $\leq$ &   $\leq$& $\leq$  &  $\leq$  \\

\hline 
$B$ & $\geq$   & $\asymp$  & $\asymp$  &   $\geq$ &   & $\leq$  &  $\leq$  \\

\hline 
$C$ & $\geq$   & $\asymp$  & $\asymp$  &   $\geq$ &   & $\leq$    &  $\leq$  \\

\hline 
$D$ &  $\geq$   &  $\leq$  &  $\leq$  &  $\asymp$   &  $\leq$ & $\leq$  &  $\leq$  \\

\hline 
$E$ &  $\geq$  &   &   & $\geq$  &  $\asymp$   &  $\leq$  & $\leq$   \\

\hline 
$F$ &  $\geq$   & $\geq$   &  $\geq$  &  $\geq$   &   $\geq$ &  $\asymp$   &   $\leq$ \\
\hline 
$G$ &  $\geq$  &  $\geq$  &  $\geq$  &  $\geq$  &   $\geq$ &$\geq$  &$\asymp$     \\

\hline

\end{tabular}
}
\caption[The complexity preorder]{The complexity preorder}
\label{complexity}

\end{table}

\end{center}

A key observation is that after removing any node of the extended Dynkin diagram or the finite Dynkin diagram of any irreducible root system $\Sigma$, the remaining diagram is a union of connected components representing finite Dynkin diagrams of irreducible root subsystems $\Sigma'$ with the property $\Sigma'\leq \Sigma$. Using this observation, it is not hard to finish the proof by induction on the rank of $\Sigma$. We now give the full details anyways.

By transitivity of the complexity preorder, after removing the two nodes $j_0$, $j_1$ of the extended Dynkin diagram of $\Sigma$, the remaining diagram is also a union of connected components representing finite Dynkin diagrams of irreducible root subsystems $\Sigma'$ with the property $\Sigma'\leq \Sigma$; suppose there are two such connected components representing irreducible root subsystems $\Sigma_1$ and $\Sigma_2$ of $\Sigma$ with $\Sigma_i\leq \Sigma$ ($i=1,2$), and the argument will be entirely similar for only one or more than two connected components. For the remaining nodes we write
$$\{j_2,\ldots,j_r\}=\{j_{i_1},\ldots,j_{i_{m}}\}\bigsqcup \{j_{k_1},\ldots,j_{k_n}\}$$
where $2\leq i_1<i_2<\cdots<i_m\leq r$ and $2\leq k_1<k_2<\cdots<k_n\leq r$ such that 
$\{j_{i_1},\ldots,j_{i_{m}}\}$ and $\{j_{k_1},\ldots,j_{k_n}\}$ label respectively the nodes of the finite Dynkin diagrams of $\Sigma_1$ and $\Sigma_2$ as inherited from the initial labeling of the extended Dynkin diagram of $\Sigma$.  

Suppose that the optimal peeling numbers for $\Sigma$ and $\Sigma_1$ and $\Sigma_2$ are respectively $q_{1,0},\ldots,q_{r,0}$ and $q_{1,0}^{\Sigma_1},\ldots,q_{m,0}^{\Sigma_1}$ and $q_{1,0}^{\Sigma_2},\ldots,q_{n,0}^{\Sigma_2}$. Apply the induction hypothesis to $\Sigma_1$ and $\Sigma_2$, using $\Sigma_i\leq \Sigma$ ($i=1,2$), we have 
$$q_{j_{i_m}}+q_{j_{i_{m-1}}}+\cdots+q_{j_{i_{m-l+1}}}\leq q_{m,0}^{\Sigma_1}+q_{m-1,0}^{\Sigma_1}+\cdots+q_{m-l+1,0}^{\Sigma_1}\leq q_{r,0}+q_{r-1,0}+\cdots+q_{r-l+1,0}$$
for all $l=1,\ldots,m$, and 
$$q_{j_{k_n}}+q_{j_{k_{n-1}}}+\cdots+q_{j_{k_{n-l+1}}}\leq q_{n,0}^{\Sigma_2}+q_{n-1,0}^{\Sigma_2}+\cdots+q_{n-l+1,0}^{\Sigma_2}\leq q_{r,0}+q_{r-1,0}+\cdots+q_{r-l+1,0}$$
for all $l=1,\ldots,n$. At last, for each $u=1,\ldots,r-1$, we have 
$$\{j_r,j_{r-1},\ldots,j_{r-u+1}\}=\{j_{i_m},j_{i_{m-1}},\ldots,j_{i_{m-v+1}}\}\bigsqcup \{j_{k_n},j_{k_{n-1}},\ldots,j_{k_{n-u+v+1}}\}$$
for some $v=0,1,\ldots,u$, so using the above two inequalities we then have 
\begin{align*}
q_{j_{r}}+q_{j_{r-1}}+\cdots+q_{j_{r-u+1}}\leq  q_{r,0}+q_{r-1,0}+\cdots+q_{r-v+1,0} +q_{r,0}+q_{r-1,0}+\cdots+q_{r-u+v+1,0}.
\end{align*}
By \eqref{fewer}, this implies that for all $u=1,\ldots,r-1$, 
\begin{align*}
q_{j_{r}}+q_{j_{r-1}}+\cdots+q_{j_{r-u+1}}\leq  q_{r,0}+q_{r-1,0}+\cdots+q_{r-u+1,0}
\end{align*}
which is the same as the desired inequality \eqref{peeling}. The missing two cases are the trivial ones: $n_0(\mathcal{P}_0)=0\leq n_0(\mathcal{P})$ and $n_r(\mathcal{P}_0)=n_r(\mathcal{P})=|\Sigma^+|$. 
\end{proof}

\begin{rem}\label{nonorigin}
The above lemma may be compared with the Appendix in \cite{ST78}, where for each irreducible root system an $r$-tuple similar to but not always the same with that in Table \ref{dismantling} was given. For the arrangement of root hyperplanes containing the origin, those $r$-tuples in the Appendix of \cite{ST78} also provide lower bounds for the peeling inequality \eqref{peeling}, but they cannot be sharp as those tuples do not match ours in Table \ref{dismantling}. Another advantage of the above lemma is that it treats root hyperplane arrangements for an irreducible root system not just at the origin but at all vertices of the alcove uniformly, which contributes greatly to the simplicity of the proof of the theorems of this paper, as compared to \cite{Zha23} where we had to decompose the possibly reducible root systems associated to the non-origin vertices into irreducible pieces and ended up with a quite cumbersome argument. 
\end{rem}

\section{Proof of Theorem \ref{characterrestriction}}
\label{MAIN}

Let $S_0$ be a fixed compact smooth $k$-dimensional submanifold of the maximal torus $T$ ($k=0,1,\ldots,r$). Consider all the translates $S=xS_0=S_0x$ ($x\in T$). 
To evaluate the $L^p(S)$ (quasi-)norm of the characters $\chi$, as $\chi$ is conjugation-invariant and the conjugation action is transitive on the finite collection of alcoves in a maximal torus, it suffices to consider $L^p(S\cap A)$ where $A$ is the fundamental alcove. Using the barycentric-semiclassical subdivision as in Lemma \ref{BSS}, it suffices to estimate the $L^p$ norm of $\chi$ on each piece 
$$S_{K,J}:=S\cap P_{K,J}$$
where $J\subset K\subsetneqq \{0,\ldots,r\}$; see Figure \ref{curve} for the example of subdividing a curve in the alcove of $\mathrm{SU}(3)$. 

We need a good coordinate system for each $P_{K,J}$. For $K\subsetneqq \{0,\ldots,r\}$, recall the notations in \eqref{tJ}, \eqref{HJperp} and \eqref{tJperp}, we write 
$$\mathfrak{t}=\mathfrak{t}^{K}\oplus\mathfrak{t}^{K\perp}$$
so that for $H\in\mathfrak{t}$, 
\begin{align}\label{splitting}
H=H^{K}+H^{K\perp}
\end{align}
where $H^K\in\mathfrak{t}^{K}$ and $H^{K\perp}\in {t}^{K\perp}$.
By our construction, $P_{K,J}$ can be covered by a region in the form of  
\begin{align}\label{PKJcover}
P_J^K\times \mathcal{N}^{K\perp}=
\{H=H^{K}+H^{K\perp}: \ 
H^{K}\in P_{J}^K, \ 
H^{K\perp}\in\mathcal{N}^{K\perp}\}
\end{align}
where 
\begin{align}\label{PKJ}
P_{J}^K:=\{H^K\in\mathfrak{t}^K: \ 0\leq t_j(H^K)\leq N^{-1}, \ \forall j\in J; \ c\geq t_j(H^K)> N^{-1}, \ \forall j\in K\setminus J\}
\end{align}
for a small positive number $c$, and $\mathcal{N}^{K\perp}$ is a neighborhood in $\mathfrak{t}^{K\perp}$ (see Figure \ref{box}). 

\begin{figure}

    \begin{tikzpicture}
    \draw [line width=1 pt](2.1651/2,1.25/2) -- (1.2987/2,1.25/2) ;
    
    \draw  [line width=1 pt](3.56698/2, 5.17828/2) -- (1.2987/2,1.25/2);

(1.21642, 1.60707)

\draw  [line width=1 pt](4/2,4.4284/2) -- (3.56698/2, 5.17828/2); 
\draw  [line width=1 pt](2.1651/2,1.25/2) -- (4/2,4.4284/2);

(1.541275, 1.4196)

\draw [stealth-stealth](1.896904, 2.785554) -- (0.535936, 0.428586); 

\node [left] at (1.21642, 1.60707) {$\mathcal{N}^{K\perp}$};

\draw (1.896904, 2.785554) -- (2.221759, 2.598084); 

\draw [stealth-stealth](0.860791, 0.241116) -- (0.535936, 0.428586);

\node [below] at (0.6983635, 0.334851) 
{$P_J^K$}; 

\draw (1.45, 1.60707) -- (2, 1.60707); 
\node [right] at (2, 1.60707) 
{$P_{K,J}$}; 

\draw (0.860791, 0.241116) -- (2.221759, 2.598084); 

\end{tikzpicture}
\caption{$P_{K,J}\subset P_J^K\times\mathcal{N}^{K\perp}$}
\label{box}
\end{figure}

We use the character formula \eqref{char} to prove Theorem \ref{characterrestriction}. We now bound $L^p$ (quasi-)norms of the key term \eqref{deltalowerJ}
in the character formula.

\begin{prop}\label{keyprop}
Let $q_{i,0}$ ($i=0,1,\ldots,r$) be the optimal peeling numbers from Table \ref{dismantling} for the irreducible root system $\Sigma$. Let 
\begin{align}\label{pk=}
p_k=\frac{k}{q_{1,0}+q_{2,0}+\cdots+q_{k,0}}.
\end{align}
These are the critical exponents listed in Table \ref{criticalexponent}. 
Then 
\begin{align}\label{keybound}
\left\|\frac{1}{\delta_J}\right\|_{L^p(S_{K,J})}
\lesssim 
\left\{
\begin{array}{ll}
N^{|\Sigma^+|-|\Sigma_J^+|-\frac{k}{p}}, & \t{ for }p>p_k,\\
N^{|\Sigma^+|-|\Sigma_J^+|-\frac{k}{p_k}}(\log N)^{\frac{1}{p_k}}, & \t{ for }p=p_k,\\
N^{|\Sigma^+|-|\Sigma_J^+|-\frac{k}{p_k}}, & \t{ for }0<p<p_k.
\end{array}
\right.
\end{align}
Moreover, the above bounds hold with the implicit constant independent of $x\in T$, as in $S=xS_0=S_0x$. 
\end{prop}

\begin{proof}
\textbf{Step 1} (local parametrization of the submanifold $S$). Recall \eqref{deltaJK}. By Lemma \ref{NKaway}, for $H\in P_{K,J}\subset \mathcal{N}_{K}$, we have
\begin{align}\label{deltaIJK}
|\delta_{J}( H)|\asymp |\delta_{J}^K( H)|
\end{align}
which implies by the splitting \eqref{splitting} that 
\begin{align}\label{deltaJdeltaKJ}
|\delta_J( H)|\asymp |\delta_J^K( H^{K})|.
\end{align}

Fix any $K'\subset\{0,1,\ldots,r\}$ with $|K'|=r$ and $K\subset K'$. Then 
$$\{t_j:\ j\in K'\}$$
is a coordinate system for $\mathfrak{t}$, with 
$$\{t_j:\ j\in K\}$$
being the coordinate subsystem for $\mathfrak{t}^K$.  
By compactness of our $k$-dimensional submanifold $S_0$, we may pick a finite cover $\mathcal{F}$ of $S_0$ by its open subsets $V_0\in\mathcal{F}$ satisfying: for each $V_0$, there exists 
\begin{itemize}
\item 
a nonnegative integer $h$ such that $h\leq k$, $h\leq |K|$ and $k-h\leq |K'|-|K|=r-|K|$, 
\item a $(k-h)$-permutation $l_g$ ($g=h+1,\ldots,k$) of $K'\setminus K$,
\item 
a nonnegative integer $m$ such that 
$m\leq h$, $m\leq |K|-|J|$ and $h-m\leq |J|$, 
\item 
an $m$-permutation $l_g$ ($g=1,\ldots,m$) of $K\setminus J$, and 
\item 
an $(h-m)$-permutation $l_g$ ($g=m+1,\ldots,h$) of $J$, 
\end{itemize}
such that $V_0$ is parametrized by the variables $\{t_{l_g}:\ g=1,\ldots,k\}$, and that the submanifold volume measure $dH$ of $S_0$ restricted to $V_0$ satisfies 
\begin{align}\label{dH}
c \prod_{g=1}^k dt_{l_g}\leq dH\leq C \prod_{g=1}^k dt_{l_g}
\end{align}
for some positive constants $c,C$. 
Then automatically for all $x\in T$, as the $x$-translate of $S_0$, $S$ is covered by its open subsets $V=xV_0=V_0x$ ($V_0\in\mathcal{F}$); and more importantly, as a translate of $V_0$, $V$ has the same parametrization by the distance functions $\{t_{l_g}:\ g=1,\ldots,k\}$ as the above, for which the same estimate of the volume measure as \eqref{dH} holds.

Using the cover \eqref{PKJcover} of $P_{K,J}$, we have  
$$S_{K,J}:=S\cap P_{K,J}=\bigcup_V V\cap P_{K,J}\subset \bigcup_V V\cap (P_{J}^K\times\mathcal{N}^{K\perp}),$$
and so it suffices to estimate 
$\|1/\delta_J\|_{L^p(V\cap (P_{J}^K\times\mathcal{N}^{K\perp}))}$. By \eqref{PKJ}, \eqref{deltaJdeltaKJ} and \eqref{dH}, we have 
\begin{align}\label{firstbound}
\left\|\frac{1}{\delta_J}\right\|_{L^p(V\cap (P_{J}^K\times\mathcal{N}^{K\perp}))}
&\lesssim 
\left(\int_{
\substack{H^K\in P_J^K
\\
a\leq t_{l_g}\leq b, \ g=h+1,\ldots,k
}
}\left|\frac{1}{\delta_J^K( H^{K})}\right|^p
\ \prod_{g=1}^k\ dt_{l_g}\right)^{\frac{1}{p}}
\end{align}
where 
$$H^K=H^K(t_{l_1},\ldots,t_{l_k})$$
and $a,b$ are some universal constants independent of the submanifolds $S$, with the implicit constant independent of $x\in T$ as in $S=xS_0=S_0x$.

\textbf{Step 2} (barycentric subdivision of $P_{J}^K$ and the use of Fubini's theorem). We order the distances from the alcove walls. Recall \eqref{PKJ}, write 
\begin{align}\label{PIJcapNIsubset}
P_{J}^K=\bigcup_{\substack{(j_{r-|K|+1},j_{r-|K|+2},\ldots,j_{r-|J|})\t{ a}\\ \t{permutation of }K\setminus J}} \mathcal{R}_{j_{r-|K|+1},j_{r-|K|+2},\ldots,j_{r-|J|}},
\end{align}
where 
\begin{align*}
&\mathcal{R}_{j_{r-|K|+1},j_{r-|K|+2},\ldots,j_{r-|J|}}\\
&=\left\{H^K\in\mathfrak{t}^K: \ 0\leq t_j(H^K)\leq N^{-1},\ j\in J;\ N^{-1}<t_{j_{r-|J|}}(H^K)\leq\cdots\leq t_{j_{r-|K|+2}}(H^K)\leq t_{j_{r-|K|+1}}(H^K)\leq c\right\}.
\end{align*} 
So it suffices to estimate for each permutation $(j_{r-|K|+1},j_{r-|K|+2},\ldots,j_{r-|J|})$ of $K\setminus J$ the quantity 
\begin{align}\label{secondbound}
\left\|\frac{1}{\delta_J}\right\|_{L^p(V\cap (\mathcal{R}_{j_{r-|K|+1},j_{r-|K|+2},\ldots,j_{r-|J|}}\times\mathcal{N}^{K\perp}))}
&\lesssim 
\left(\int_{
\substack{H^K\in \mathcal{R}_{j_{r-|K|+1},j_{r-|K|+2},\ldots,j_{r-|J|}}
\\
a\leq t_{l_g}\leq b, \ g=h+1,\ldots,k
}
}\left|\frac{1}{\delta_J^K( H^{K})}\right|^p
\ \prod_{g=1}^k\ dt_{l_g}\right)^{\frac{1}{p}}.
\end{align}

Now we involve the peeling numbers. For each $\alpha\in\Sigma^+_K\setminus \Sigma^+_J$, write 
$$\alpha=\sum_{j\in K} n_j^\alpha \alpha_j$$ 
where the coefficients $n_j^\alpha$'s ($j\in K$) are all nonnegative integers and there is at least one $j\in K\setminus J$ for which $n_j^\alpha$ is positive. For $H^K\in P_J^K$, taking the positive constant $c$ in the definition \eqref{PKJ} of $P_J^K$ small enough, we have 
\begin{align}\label{enj}
\left|e^{\frac{\alpha(H^K)}{2}}-e^{-\frac{\alpha(H^K)}{2}}\right|
\asymp \sum_{j\in K} n_j^\alpha t_j(H^K).
\end{align}
For each permutation $(j_{r-|K|+1},j_{r-|K|+2},\ldots,j_{r-|J|})$ of $K\setminus J$, let $q_i$ ($i=r-|K|+1,r-|K|+2,\ldots,r-|J|$) be the associated peeling numbers as defined in \eqref{qk=nk-}, i.e., 
\begin{align}\label{defqi}
q_i=|\Sigma^+_{I_{i-1}}|-|\Sigma^+_{I_{i}}|
\end{align}
where $I_i=\{j_{i+1},j_{i+2},\ldots,j_{r-|J|}\}\cup J $. Then
$q_i$ is exactly the number of $\alpha$ in $\Sigma^+_K\setminus \Sigma^+_J$ such that the coefficients $n_j^\alpha$'s are all zero for $j=j_k$ with $r-|K|+1\leq k<i$ while the coefficient $n_{j_i}^\alpha$ is positive; for such an $\alpha$ and for $H^K\in \mathcal{R}_{j_{r-|K|+1},j_{r-|K|+2},\ldots,j_{r-|J|}}$, by \eqref{enj}, we then have 
\begin{align*}
\left|e^{\frac{\alpha(H^K)}{2}}-e^{-\frac{\alpha(H^K)}{2}}\right|
\asymp t_{j_i}(H^K).
\end{align*}
Hence for $H^K\in \mathcal{R}_{j_{r-|K|+1},j_{r-|K|+2},\ldots,j_{r-|J|}}$, it holds 
\begin{align}\label{deltaKJasymp}
|\delta_J^K( H^K)|=\prod_{\alpha\in \Sigma^+_K\setminus \Sigma^+_J} \left|e^{\frac{\alpha(H^K)}{2}}-e^{-\frac{\alpha(H^K)}{2}}\right|\asymp t_{j_{r-|K|+1}}^{q_{r-|K|+1}}(H^K)t_{j_{r-|K|+2}}^{q_{r-|K|+2}}(H^K)\cdots t_{j_{r-|J|}}^{q_{r-|J|}}(H^K).
\end{align}

Recall from {\bf Step 1} that $l_g$ ($g=1,\ldots,m$) is an $m$-permutation of $K\setminus J$. Then we can rewrite 
$$l_g=j_{n_{k-h+g}}, \ g=1,\ldots,m$$ 
for some 
$$r-|K|+1\leq n_{k-h+1}<\cdots<n_{k-h+m}\leq r-|J|.$$
Using \eqref{deltaKJasymp}, we estimate
\begin{align*}
&\left\|\frac{1}{\delta_J}\right\|_{L^p(V\cap (\mathcal{R}_{j_{r-|K|+1},j_{r-|K|+2},\ldots,j_{r-|J|}}\times\mathcal{N}^{K\perp}))}\\
&\lesssim
\bigg(\int_{\substack{
N^{-1}<t_{j_{r-|J|}}(H^K)\leq\cdots\leq t_{j_{r-|K|+1}}(H^K)\leq c
\\
0\leq t_{l_g}\leq N^{-1},\ g=m+1,\ldots,h
\\ 
a\leq t_{l_g}\leq b, \ g=h+1,\ldots,k}
} t_{j_{r-|K|+1}}^{-q_{r-|K|+1}p}(H^K)\cdots t_{j_{r-|J|}}^{-q_{r-|J|}p}(H^K)\\
&\ \ \ \ \ \  \cdot 
\prod_{g=1}^m\ dt_{j_{n_{k-h+g}}}
\prod_{g=m+1}^k\ dt_{l_g}
\bigg)^{\frac{1}{p}}\\
&\lesssim 
N^{q_{n_{k-h+m}+1}+\cdots+q_{r-|J|}}\cdot 
\bigg(\int_{\substack{
N^{-1}<t_{j_{n_{k-h+m}}}\leq\cdots\leq t_{j_{n_{k-h+1}}}\leq c
\\
0\leq t_{l_g}\leq N^{-1},\ g=m+1,\ldots,h
\\ 
a\leq t_{l_g}\leq b, \ g=h+1,\ldots,k}
} \\
&\ \ \ \ \ \  t_{j_{n_{k-h+1}}}^{-(q_{r-|K|+1}+\cdots+q_{n_{k-h+1}})p}\cdots t_{j_{n_{k-h+m}}}^{-(q_{n_{k-h+m-1}+1}+\cdots+q_{n_{k-h+m}})p}
\prod_{g=1}^m\ dt_{j_{n_{k-h+g}}}
\prod_{g=m+1}^k\ dt_{l_g}
\bigg)^{\frac{1}{p}}\\
&\lesssim 
N^{-\frac{h-m}{p}+q_{n_{k-h+m}+1}+\cdots+q_{r-|J|}} \bigg(\int_{
N^{-1}<t_{j_{n_{k-h+m}}}\leq\cdots\leq t_{j_{n_{k-h+1}}}\leq c
} \\
&\ \ \ \ \ \  
t_{j_{n_{k-h+1}}}^{-(q_{r-|K|+1}+\cdots+q_{n_{k-h+1}})p}\cdots t_{j_{n_{k-h+m}}}^{-(q_{n_{k-h+m-1}+1}+\cdots+q_{n_{k-h+m}})p}
\prod_{g=1}^m\ dt_{j_{n_{k-h+g}}}
\bigg)^{\frac{1}{p}}.
\end{align*}
In particular, the second inequality above holds with the implicit constant independent of the function $H^K=H^K(t_1,\ldots,t_k)$! We have arrived at an estimate with the implicit constant independent of $x\in T$ as in $S=xS_0=S_0x$. 

\textbf{Step 3} (the first reverse use of Fubini's theorem). 
Now we add $k-h$ variables back to the integration! 
Pick any permutation $j_{0},\ldots,j_{r-|K|}$ of $\{0,1,\ldots,r\}\setminus K$. As $k-h\leq r-|K|$, we can pick any subsequence $j_{n_{1}},\ldots,j_{n_{k-h}}$ of $j_{1},\ldots,j_{r-|K|}$ such that 
$$1\leq n_{1}<\cdots<n_{k-h}\leq r-|K|.$$
Associated to the tuple $j_{0},\ldots,j_{r-|J|}$, the peeling numbers $q_i$'s can now be defined for all $i=0,\ldots,r-|J|$ as in \eqref{defqi}. We also put $n_0=0$ for convenience. 

We estimate 
\begin{align*}
&\left\|\frac{1}{\delta_J}\right\|_{L^p(V\cap (\mathcal{R}_{j_{r-|K|+1},j_{r-|K|+2},\ldots,j_{r-|J|}}\times\mathcal{N}^{K\perp}))}\\
&\lesssim 
N^{-\frac{h-m}{p}+q_{n_{k-h+m}+1}+\cdots+q_{r-|J|}}
\bigg(\int_{   
N^{-1}<t_{j_{n_{k-h+m}}}\leq\cdots\leq t_{j_{n_{k-h+1}}}\leq c
}\\
&\ \ \ \ \ \ t_{j_{n_{k-h+1}}}^{-(q_{n_{k-h}+1}+\cdots+q_{n_{k-h+1}})p}\cdots t_{j_{n_{k-h+m}}}^{-(q_{n_{k-h+m-1}+1}+\cdots+q_{n_{k-h+m}})p}
\prod_{g=1}^m\  dt_{j_{n_{k-h+g}}}
\bigg)^{\frac{1}{p}}
\\
&\lesssim 
N^{-\frac{h-m}{p}+q_{n_{k-h+m}+1}+\cdots+q_{r-|J|}}
\bigg(\int_{
\substack{   
N^{-1}<t_{j_{n_{k-h+m}}}\leq\cdots\leq t_{j_{n_{k-h+1}}}\leq c
\\
c\leq t_{j_{n_{k-h}}}\leq\cdots\leq t_{j_{n_1}} \leq 2c
}
}\\
&\ \ \ \ \ \ t_{j_{n_{k-h+1}}}^{-(q_{n_{k-h}+1}+\cdots+q_{k-h+1})p}\cdots t_{j_{n_{k-h+m}}}^{-(q_{n_{k-h+m-1}+1}+\cdots+q_{n_{k-h+m}})p}
\ dt_{j_{n_1}}\cdots\ dt_{j_{n_{k-h+m}}}
\bigg)^{\frac{1}{p}}
\\
&\lesssim 
N^{-\frac{h-m}{p}+q_{n_{k-h+m}+1}+\cdots+q_{r-|J|}}
\bigg(\int_{
\substack{   
N^{-1}<t_{j_{n_{k-h+m}}}\leq\cdots\leq t_{j_{n_{k-h+1}}}\leq c
\\
c\leq t_{j_{n_{k-h}}}\leq\cdots\leq t_{j_{n_1}} \leq 2c
}
} \\
&\ \ \ \ \ \ t_{j_{n_{1}}}^{-(q_1+\cdots+q_{n_1})p}\cdots t_{j_{n_{k-h+m}}}^{-(q_{n_{k-h+m-1}+1}+\cdots+q_{n_{k-h+m}})p}
\ dt_{j_{n_1}}\cdots\ dt_{j_{n_{k-h+m}}}
\bigg)^{\frac{1}{p}}
\\
&\lesssim 
N^{-\frac{h-m}{p}+q_{n_{k-h+m}+1}+\cdots+q_{r-|J|}}
\bigg(\int_{  
N^{-1}<t_{j_{n_{k-h+m}}}\leq\cdots\leq t_{j_{n_{1}}}\leq 2c
}\\
&\ \ \ \ \ \ t_{j_{n_{1}}}^{-(q_1+\cdots+q_{n_1})p}\cdots t_{j_{n_{k-h+m}}}^{-(q_{n_{k-h+m-1}+1}+\cdots+q_{n_{k-h+m}})p}
\ dt_{j_{n_1}}\cdots\ dt_{j_{n_{k-h+m}}}
\bigg)^{\frac{1}{p}}
\\
&\lesssim 
N^{-\frac{h-m}{p}+q_{n_{k-h+m}+1}+\cdots+q_{r-|J|}}
\bigg(\int_{  
N^{-1}<t_{j_{n_{k-h+m}}}\leq\cdots\leq t_{j_{n_{1}}}\leq 2c
}\\
&\ \ \ \ \ \ 
t_{j_{n_{1}}}^{-(q_{0}+q_1+\cdots+q_{n_1})p}\cdots t_{j_{n_{k-h+m}}}^{-(q_{n_{k-h+m-1}+1}+\cdots+q_{n_{k-h+m}})p}
\ dt_{j_{n_1}}\cdots\ dt_{j_{n_{k-h+m}}}
\bigg)^{\frac{1}{p}}.
\end{align*}

{\bf Step 4} (``upgrading'' the parameters).  
Set 
$$s_g:=t_{j_{n_g}}$$ 
for $g=1,\ldots,k-h+m$. Using crucially the property 
\begin{align}\label{crucial}
N^{-1}< s_{k-h+m}\leq\cdots\leq s_1,
\end{align}
we can further estimate 
\begin{align*}
&\left\|\frac{1}{\delta_J}\right\|_{L^p(V\cap (\mathcal{R}_{j_{r-|K|+1},j_{r-|K|+2},\ldots,j_{r-|J|}}\times\mathcal{N}^{K\perp}))}\\
&\lesssim 
N^{-\frac{h-m}{p}+q_{k-h+m+1}+\cdots+q_{r-|J|}}\\
&\ \ \ \cdot 
\bigg(\int_{  
N^{-1}<s_{k-h+m}\leq\cdots\leq s_{1}\leq 2c
} 
s_{1}^{-(q_{0}+q_1)p}
s_2^{-q_2p}\cdots s_{k-h+m}^{-q_{k-h+m}p}
\ ds_{1}\cdots\ ds_{k-h+m}
\bigg)^{\frac{1}{p}}.
\end{align*}

{\bf Step 5} (using the peeling inequality). Now we can use the peeling inequality \eqref{peeling}, which gives 
$$q_0+q_1+\cdots+q_g\geq q_{0,0}+q_{1,0}+\cdots+q_{g,0}=q_{1,0}+\cdots+q_{g,0}$$
for all $g=1,\ldots,k-h+m$. Again exploiting \eqref{crucial}, the above peeling inequality implies  
\begin{align*}
&N^{|\Sigma_J^+|}\cdot \left\|\frac{1}{\delta_J}\right\|_{L^p(V\cap (\mathcal{R}_{j_{r-|K|+1},j_{r-|K|+2},\ldots,j_{r-|J|}}\times\mathcal{N}^{K\perp}))}\\
&\lesssim 
N^{-\frac{h-m}{p}+q_{k-h+m+1}+\cdots+q_{r-|J|}+|\Sigma_J^+|}\\
&\ \ \ \cdot \bigg(
\int_{N^{-1}< s_{k-h+m}\leq\cdots\leq s_1\leq 2c} 
s_1^{-(q_0+q_1)p}s_2^{-q_2p}\cdots s_{k-h+m}^{-q_{k-h+m}p} 
\ ds_{1}\cdots\ ds_{k-h+m}
\bigg)^{\frac{1}{p}}\\
&\lesssim 
N^{-\frac{h-m}{p}+q_{k-h+m+1,0}+\cdots+q_{r,0}}\\
&\ \ \ \cdot \bigg(
\int_{N^{-1}< s_{k-h+m}\leq\cdots\leq s_1\leq 2c} 
s_1^{-q_{1,0}p}s_2^{-q_{2,0}p}\cdots s_{k-h+m}^{-q_{k-h+m,0}p} 
\ ds_{1}\cdots\ ds_{k-h+m}
\bigg)^{\frac{1}{p}}.
\end{align*}
Here we also used the equality 
$$q_0+q_1+\cdots+q_{r-|J|}+|\Sigma^+_J|=q_{1,0}+q_{2,0}+\cdots+q_{r,0}=|\Sigma^+|.$$

\textbf{Step 6} (the second reverse use of Fubini's theorem). 
For the last step, we add another $h-m$ variables $s_{k-h+m+1},\ldots,s_k$ back to the integration! Then we can further estimate 
\begin{align*}
&N^{|\Sigma_J^+|}\cdot \left\|\frac{1}{\delta_J}\right\|_{L^p(V\cap (\mathcal{R}_{j_{r-|K|+1},j_{r-|K|+2},\ldots,j_{r-|J|}}\times\mathcal{N}^{K\perp}))}\\
&\lesssim 
N^{q_{k+1,0}+\cdots+q_{r,0}} \bigg(
\int_{\substack{ 0.5N^{-1}\leq s_{k}\leq s_{k-1}\leq\cdots\leq s_{k-h+m+1}\leq N^{-1}\\ N^{-1}< s_{k-h+m}\leq\cdots\leq s_1\leq 2c }} 
s_1^{-q_{1,0}p}\cdots s_{k}^{-q_{k,0}p} 
\ ds_{1}\cdots\ ds_{k}
\bigg)^{\frac{1}{p}}\\
&\lesssim N^{q_{k+1,0}+\cdots+q_{r,0}} \bigg(
\int_{0.5 N^{-1}< s_k\leq \cdots\leq s_1\leq 2c} 
s_1^{-q_{1,0}p}\cdots  s_k^{-q_{k,0}p}
\ ds_{1}\cdots ds_k
\bigg)^{\frac{1}{p}}.
\end{align*}
So we end up with an integral that could be complicated to evaluate if the parameters $q_{1,0},\ldots,q_{k,0}$ were arbitrary. Fortunately in our case, we have \eqref{fewer}, and the proposition now follows by an easy exercise in multiple integrals which we record below as a lemma. 
\end{proof}

\begin{lem}\label{integration}
Let $a_1>a_2>\cdots>a_k$ and $c$ be positive numbers given as constants. Let $N$ be a parameter that takes large positive values. Let $A=a_1+\cdots+a_k$, and $p_0=k/A$. Then 
\begin{align*}
\left(
\int_{N^{-1}< s_k\leq s_{k-1}\leq \cdots\leq s_1\leq c} 
s_1^{-a_1p}\cdots  s_k^{-a_kp}
\ ds_{1}\cdots ds_k 
\right)^{\frac{1}{p}}
\asymp \left\{
\begin{array}{lll}
 N^{A-\frac{k}{p}}, & \t{ for }p>p_0, 
\\
(\log N )^{\frac{1}{p_0}}, &\t{ for }p=p_0, \\
1, & \t { for }0<p< p_0. 
\end{array}
\right.
\end{align*}
\end{lem}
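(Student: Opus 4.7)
The plan is to apply the logarithmic substitution $s_i = N^{-u_i}$ to convert the multiple integral into a Laplace-type integral over a simplex. With $b_i := a_i p - 1$, this substitution yields
$$I = (\log N)^k \int_{\Delta_N} N^{\phi(u)}\, du_1 \cdots du_k, \qquad \phi(u) := \sum_{i=1}^k b_i u_i,$$
where $\Delta_N$ coincides with the standard ordered simplex $\Delta = \{0 \leq u_1 \leq \cdots \leq u_k \leq 1\}$ up to a boundary layer of width $O(1/\log N)$ at $u_1 \approx 0$ coming from $s_1 \leq c$. The $N$-asymptotics of $I$ are then governed by $\max_\Delta \phi$; since $\phi$ is linear, this maximum is attained at a vertex $v_j := (\underbrace{0, \ldots, 0}_{j}, \underbrace{1, \ldots, 1}_{k-j})$ for some $j \in \{0, 1, \ldots, k\}$, with value $\phi(v_j) = B'_j := (a_{j+1} + \cdots + a_k) p - (k - j)$ (and $B'_k = 0$).

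Next, I would use the hypothesis $a_1 > a_2 > \cdots > a_k$ to pin down $\max_\Delta \phi$ explicitly. Setting $\bar a_{>j} := (a_{j+1} + \cdots + a_k)/(k-j)$ and $\bar a_{\leq j} := (a_1 + \cdots + a_j)/j$, strict decreasingness of the $a_i$'s forces both averages to be strictly decreasing in $j$, so $1/\bar a_{>j}$ and $1/\bar a_{\leq j}$ are strictly increasing, with $\min_j 1/\bar a_{>j} = \max_j 1/\bar a_{\leq j} = k/A = p_0$. From the equivalences $B'_j \leq 0 \iff p \leq 1/\bar a_{>j}$ and $B'_0 \geq B'_j \iff p \geq 1/\bar a_{\leq j}$, it follows that $B'_j \leq 0$ for all $j$ iff $p \leq p_0$, and $B'_0 \geq B'_j$ for all $j$ iff $p \geq p_0$. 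Hence $\max_\Delta \phi = \max(0, B'_0) = \max(0, Ap - k)$, with the optimizer transitioning from $v_k = 0$ (for $p < p_0$) to $v_0 = (1, \ldots, 1)$ (for $p > p_0$).

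A standard Laplace-type estimate near the optimizing face then gives each case. For $p > p_0$, the maximum $B'_0 > 0$ is attained uniquely at $v_0$; rescaling $u_i = 1 - z_i/\log N$ and noting that the linearized integrand is integrable over the admissible cone (thanks to $B'_0 > B'_j$ for all $j > 0$), one obtains $\int_\Delta N^\phi du \asymp N^{B'_0}/(\log N)^k$, so $I \asymp N^{Ap - k}$ and $I^{1/p} \asymp N^{A - k/p}$. For $p < p_0$, all $B'_j < 0$ and $\max \phi = 0$ is attained only at $v_k = 0$; the analogous rescaling at the origin gives $I \asymp 1$. For $p = p_0$, $B'_0 = 0$ and the maximum is attained along the full diagonal edge $\{(t, \ldots, t) : 0 \leq t \leq 1\}$; integration along this $O(1)$-length edge with $k-1$ transverse $1/\log N$-directions yields $I \asymp \log N$ and $I^{1/p_0} \asymp (\log N)^{1/p_0}$.

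The main obstacle is the careful execution of the Laplace estimates, especially at $p = p_0$ where the optimizer is one-dimensional rather than a point, and for $p$ in intermediate ranges where some $b_i$'s may be positive and others negative (so $N^\phi$ decays in some directions and grows in others, and the optimum is sustained by a balance enforced by the ordering constraint $u_1 \leq \cdots \leq u_k$). A cleaner fully elementary alternative is a dyadic decomposition $s_i \asymp 2^{-n_i}$: the integral becomes the geometric sum $\sum_{1 \lesssim n_1 \leq \cdots \leq n_k \lesssim \log_2 N} \prod_i 2^{b_i n_i}$, and after the change of variables $m_0 = n_1$, $m_j = n_{j+1} - n_j$ ($j \geq 1$), it factorizes as $\sum \prod_{j=0}^{k-1} 2^{B'_j m_j}$ subject to $\sum m_j \lesssim \log_2 N$, from which the three regimes are immediate by summing geometric series with the upper cutoff.
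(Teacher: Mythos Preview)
The paper does not prove this lemma---it is recorded after Proposition~\ref{keyprop} as ``an exercise in multiple integrals'' and left to the reader---so there is no proof to compare against. Your proposal is correct and supplies what the paper omits.

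Both of your routes work. The logarithmic substitution $s_i=N^{-u_i}$ indeed converts the integral into $(\log N)^k\int_{\Delta}N^{\phi(u)}\,du$ with $\phi(u)=\sum_i b_i u_i$, and your vertex analysis via the monotone averages $\bar a_{>j}$ and $\bar a_{\leq j}$ is exactly where the strict ordering $a_1>\cdots>a_k$ enters; it correctly shows that $\max_\Delta\phi=\max(0,Ap-k)$ with a one-dimensional maximizing edge precisely at $p=p_0$. The dyadic alternative is the cleanest to carry out in full: after the change $m_0=n_1$, $m_j=n_{j+1}-n_j$, the exponent becomes $\sum_{j=0}^{k-1}B'_j m_j$ over $m_j\geq 0$, $\sum_j m_j\lesssim\log_2 N$. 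For $p>p_0$ one substitutes $m_0=M-r-\sum_{j\geq 1}m_j$ (with $M\asymp\log_2 N$) to extract the factor $2^{B'_0 M}\asymp N^{Ap-k}$ and leave a convergent geometric sum, since $B'_0>0$ and $B'_j-B'_0<0$ for $j\geq 1$. For $p<p_0$ all $B'_j<0$ and the whole sum is $\asymp 1$. For $p=p_0$ only $B'_0=0$, so the free $m_0$-sum contributes the single factor $\asymp\log N$ while the remaining $B'_j<0$ sums stay bounded. This completes all three regimes without any delicate Laplace analysis.
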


Now we can finish the proof of Theorem \ref{characterrestriction}. 
For $\mu\in\Lambda^+$, the Laplace--Beltrami eigenvalue of $\chi_\mu$ equals $-|\mu|^2+|\rho|^2=-N^2$ (see Chapter 5 of \cite{Hel00}). As discussed at the beginning of this section, it suffices to get the correct $L^p$ bound of $\chi_\lambda$ on each $S_{K,J}$ ($J\subset K\subsetneqq \{0,\ldots,r\}$).  
Using \eqref{char}, we have
$$\left|\chi_\mu(\exp H)\right|
\leq\frac{1}{|W_J|\cdot  |\delta_J( H)|}\sum_{s\in W}\left|\chi^J_{(s\mu)^J}(\exp H^J)\right|.$$
Now Lemma \ref{charbound} gives that for $H\in S_{K,J}\subset P_J$, 
$$\left|\chi_\mu(\exp H)\right|
\lesssim N^{|\Sigma^+_J|}\cdot \frac{1}{|\delta_J( H)|}.$$
The desired bound follows from Proposition \ref{keyprop}. Sharpness of the bound will be demonstrated in the next section.

\section{Sharpness of Theorem \ref{characterrestriction}}
\label{sharpness}
In this section we establish that the character bounds in Theorem \ref{characterrestriction} are sharp for all $p>0$. It then follows that the bounds in Theorem \ref{jointrestriction} are sharp for all $p\geq 2$, with the sole exception noted therein. 
As discussed in the Introduction, among the $k$-dimensional submanifolds of the maximal torus $T$, we pick the $k$-dimensional facet $A_J$ of the alcove $A$ 
that lies on the largest number of root hyperplanes among all $k$-dimensional facets to saturate the bound, and we can construct $A_J$ by peeling the root system in the slowest manner. Using Lemma \ref{nk}, for any permutation $\mathcal{P}_0=(j_0,j_1,\ldots,j_r)$ of $\{0,1,\ldots,r\}$ that realizes $q_i(\mathcal{P}_0)=q_{i,0}$ for all $i=0,1,\ldots,r$, set $J:=\{j_{k+1},\ldots,j_{r}\}$. 

\begin{thm}
Let $\mu=N\rho\in\Lambda^+$, where $N$ is any (large) natural number. Then $\|\chi_\mu\|_{L^p(A_J)}$ saturate the bounds in Theorem \ref{characterrestriction} for all $p>0$. 
\end{thm}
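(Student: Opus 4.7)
The plan is to expose a pointwise identity of the form $|\chi_{N\rho}(\exp H)| = N^{(d-r)/2 - k/p_k}|F_N(H)|$ on $A_J$ for an explicit oscillatory product $F_N$, and then evaluate $\|F_N\|_{L^p(A_J)}$ using a cover of $A_J$ mirroring the argument of Proposition \ref{keyprop}, only with $\asymp$ replacing $\lesssim$.

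The starting point is that $\sum_{s \in W}\det s\,e^{(sN\rho)(H)} = \delta(NH)$, which together with the Weyl character formula \eqref{Weylcharacter} and the product formula \eqref{alphadelta} yields, for regular $H$,
$$\chi_{N\rho}(\exp H) \;=\; \frac{\delta(NH)}{\delta(H)} \;=\; \prod_{\alpha \in \Sigma^+}\frac{\sin(N\alpha(H)/(2i))}{\sin(\alpha(H)/(2i))}.$$
Fixing a positive system $\Sigma^+ \supset \Sigma_J^+$, I split this product according to whether $\alpha \in \Sigma_J^+$ or not. On $A_J$ each $\alpha(H)$ with $\alpha \in \Sigma_J$ lies in $2\pi i\mathbb{Z}$, so the first sub-product has the indeterminate form $0/0$; however, for $H \in A_J$ and small $\epsilon \in \mathfrak{t}_J$, this first sub-product at $H + \epsilon$ reduces (up to signs coming from parities when $0 \in J$) to $\chi^J_{N\rho_J}(\exp\epsilon)$, the character at $\exp\epsilon$ of the compact simply connected group with root system $\Sigma_J$. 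Letting $\epsilon \to 0$ and applying the Weyl dimension formula yields $\chi^J_{N\rho_J}(e) = N^{|\Sigma_J^+|}$. Since $J = \{j_{k+1},\ldots,j_r\}$ was chosen so that peeling in the order $j_0, j_1,\ldots,j_k$ realizes the slowest peeling of $\Sigma$, one has $|\Sigma_J^+| = |\Sigma^+| - (q_{1,0} + \cdots + q_{k,0}) = (d-r)/2 - k/p_k$, and therefore
$$|\chi_{N\rho}(\exp H)|\;=\;N^{(d-r)/2 - k/p_k}\,|F_N(H)|,\qquad F_N(H):=\prod_{\alpha \in \Sigma^+\setminus\Sigma_J^+}\frac{\sin(N\alpha(H)/(2i))}{\sin(\alpha(H)/(2i))}$$
pointwise on $A_J$.

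It then remains to verify $\|F_N\|_{L^p(A_J)} \asymp N^{k/p_k - k/p}$ for $p > p_k$, $\asymp (\log N)^{1/p_k}$ at $p = p_k$, and $\asymp 1$ for $0 < p < p_k$. For this I would apply to $A_J$ the barycentric-semiclassical cover of Lemma \ref{BSS}, relativized to the sub-facets $A_K$ ($K \supsetneq J$). On a piece $\widetilde{P}_K^J$ of points $H \in A_J$ at distance $\lesssim N^{-1}$ from $A_K$ but $\gtrsim N^{-1}$ from every deeper sub-facet, each ratio $\sin(N\alpha(H)/(2i))/\sin(\alpha(H)/(2i))$ with $\alpha \in \Sigma_K^+ \setminus \Sigma_J^+$ is of size $\asymp N$, while for $\alpha \in \Sigma^+ \setminus \Sigma_K^+$ it is bounded and oscillatory. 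The slowest-peeling choice of $J$ ensures that the growth of $|\Sigma_K^+ \setminus \Sigma_J^+|$ as $K$ expands back along the peeling chain is dictated precisely by the numbers $q_{k,0}, q_{k-1,0}, \ldots, q_{1,0}$, so the iterated integral over $A_J$ in coordinates $t_{j_0}, \ldots, t_{j_{k-1}}$ reduces to exactly the integral treated in Lemma \ref{integration}.

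The main obstacle is turning the upper-bound estimate into a matching lower bound: in Proposition \ref{keyprop} the oscillatory numerators were bounded by $1$, whereas here we must ensure they do not cancel systematically after integration. This is handled on each piece by a change of variables along the peeling sequence that decouples the integrand into an iterated product of one-variable integrals of the form $\int_{N^{-1}}^{c}|\sin(Nx)/\sin(x)|^{p}\,dx$ weighted by appropriate powers; each such one-dimensional integral enjoys a matching $\asymp$ version of Lemma \ref{integration} via the elementary estimate $\int_{a}^{b}|\sin(Nx)|^p\,dx \asymp (b-a)$ for $b - a \gtrsim N^{-1}$. Summing over the cover, the piece associated with the deepest sub-facet (a vertex of $A_J$ at the end of the slowest peeling) delivers the dominant term, matching the upper bound of Theorem \ref{characterrestriction} in every regime $p > 0$.
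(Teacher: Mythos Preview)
Your pointwise identity is exactly the paper's formula \eqref{CSharp}: on $A_J$ one has $|\chi_{N\rho}(\exp H)|=N^{|\Sigma_J^+|}\,|\delta_J(NH)|/|\delta_J(H)|$, and your exponent $(d-r)/2-k/p_k$ equals $|\Sigma_J^+|$ by the choice of $J$. Since the upper bound is already Theorem~\ref{characterrestriction}, only the lower bound on $\|F_N\|_{L^p(A_J)}$ needs to be supplied, and that is where your proposal has a genuine gap.

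The decoupling step does not go through. On the region $S_N=\{H\in A_J:N^{-1}<t_{j_k}\le\cdots\le t_{j_1}\le c\}$ the \emph{denominator} indeed factorizes, $|\delta_J(H)|\asymp\prod_i t_{j_i}^{q_{i,0}}$, because for each $\alpha\in\Sigma^+\setminus\Sigma_J^+$ whose first nonzero simple-root coefficient occurs at $\alpha_{j_i}$ one has $|\alpha(H)|\asymp t_{j_i}$. But the corresponding numerator factor $|\sin(N\alpha(H)/2i)|$ depends on the full combination $\sum_l n_{j_l}^\alpha t_{j_l}$ and is \emph{not} comparable to $|\sin(N\pi t_{j_i})|$; compound roots couple all the variables at scale $N^{-1}$, and no linear change of coordinates turns $\prod_{\alpha\in\Sigma^+\setminus\Sigma_J^+}|\sin(N\alpha(H)/2i)|$ into a tensor product of one-variable functions (already for $\Sigma=A_2$ with $k=2$ there are three sine factors in two variables). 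The one-dimensional fact $\int_a^b|\sin(Nx)|^p\,dx\asymp b-a$ is correct but does not apply once several sines with different linear arguments are multiplied together. Your last sentence is also off for $p<p_k$: in that range the dominant contribution to the integral in Lemma~\ref{integration} comes from the bulk of the simplex, not from the deepest vertex, so the single ``corner piece'' cannot deliver the required $\gtrsim 1$.

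The paper replaces decoupling by periodicity. The hyperplanes $\{\alpha(NH)\in 2\pi i\Z\}$ for $\alpha\in\Sigma\setminus\Sigma_J$ slice $\overline{A_J}$ into congruent tiny alcoves $A_\beta$ of diameter $\asymp N^{-1}$; on each half-shrunk copy $A_\beta'$ one has $|\delta_J(NH)|\gtrsim 1$ uniformly. Since every $t_{j_i}>N^{-1}$ on $S_N$, the denominator $|\delta_J(H)|$ is essentially constant across each $A_\beta$, and $|A_\beta'\cap S_N|\asymp|A_\beta\cap S_N|$, so summing over $\beta$ gives
\[
\left\|\frac{\delta_J(N\cdot)}{\delta_J(\cdot)}\right\|_{L^p(S_N)}\gtrsim\left\|\frac{1}{\delta_J}\right\|_{L^p(S_N)},
\]
and the right-hand side is evaluated directly by Lemma~\ref{integration}, saturating \eqref{keybound} for every $p>0$ at once. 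No relativized barycentric--semiclassical cover of $A_J$ is needed.
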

\begin{proof}
Define 
\begin{align}\label{submanifold}
S_N:=\{H\in A_J:\ N^{-1}< t_{j_{k}}(H)\leq\cdots\leq t_{j_{1}}(H)\leq c\}.
\end{align} 
Recall that $q_{0,0}=0$, hence $\alpha_{j_1},\alpha_{j_2},\ldots,\alpha_{j_r}$ is a simple system for $\Sigma$ which induces the positive system $\Sigma^+$. 
By a similar reasoning that arrives at \eqref{deltaKJasymp}, we can choose $c$ small enough such that 
\begin{align}\label{deltaJIs}
|\delta _{J}( H)|=\prod_{\alpha\in\Sigma^+\setminus \Sigma_J^+}\left|e^{\frac{\alpha(H)}{2}}-e^{-\frac{\alpha(H)}{2}}\right|\asymp  t_{j_{1}}^{q_{1,0}}(H) \cdots t_{j_{k}}^{q_{k,0}}(H)
\end{align} 
uniformly for $H\in S_N$. 
We will show that $\|\chi_\mu\|_{L^p(S_N)}$ already saturates the bounds in Theorem \ref{characterrestriction} for all $p>0$. By the Weyl character formula, we have  
\begin{align*}
\chi_\mu(\exp H)&=\frac{\sum_{s\in W}\det s \ e^{(s(N\rho))(H)}}{\sum_{s\in W}\det s \ e^{(s\rho)(H)}}\\
&=\frac{\sum_{s\in W}\det s \ e^{(s\rho)(NH)}}{\sum_{s\in W}\det s \ e^{(s\rho)(H)}}\\
&=\frac{\prod_{\alpha\in\Sigma^+}\left(e^{\frac{\alpha(NH)}{2}}-e^{-\frac{\alpha(NH)}{2}}\right)}{\prod_{\alpha\in\Sigma^+}\left(e^{\frac{\alpha(H)}{2}}-e^{-\frac{\alpha(H)}{2}}\right)}.
\end{align*}
For $H\in A_J$, it holds
\begin{align}\label{CSharp}
|\chi_\mu(\exp H)|=\left(\prod_{\alpha\in\Sigma^+_J}\lim_{H_\varepsilon\to 0} \frac{e^{\frac{\alpha(NH_\varepsilon)}{2}}-e^{-\frac{\alpha(NH_\varepsilon)}{2}}}{e^{\frac{\alpha(H_\varepsilon)}{2}}-e^{-\frac{\alpha(H_\varepsilon)}{2}}}\right)
 \frac{|\delta_J( NH)|}{|\delta_J( H)|}
 = N^{|\Sigma_J^+|}\frac{|\delta_J( NH)|}{|\delta_J( H)|}.
\end{align}

As the volume measure $dH$ on $S_N$ satisfies 
$$dH\asymp \ dt_{j_1}\cdots\ dt_{j_k},$$
by \eqref{deltaJIs}, we have 
\begin{align*}
\left\| \frac{1}{\delta_J}\right\|_{L^p(S_N)}
&\asymp  
\left(\int_{N^{-1}<t_{j_{k}}\leq\cdots\leq t_{j_1}\leq c} t_{j_1}^{-q_{1,0}p}\cdots t_{j_{k}}^{-q_{k,0}p}\ dt_{j_{1}}\cdots dt_{j_k}\right)^{\frac{1}{p}}
\end{align*}
which by \eqref{fewer} and Lemma \ref{integration} saturates the bound on the right side of \eqref{keybound}.

\begin{figure}

\begin{tikzpicture}

\draw (0, 0) -- (4/2, 6.928/2);
\draw (0,0) -- (8/2, 0);
\draw  (4/2, 6.928/2) -- (8/2, 0);

\draw   (2/2, 3.464/2) --(4/2,0) ; 

\draw (2/2, 3.464/2) -- (6/2, 3.464/2);

\draw (4/2,0) -- (6/2, 3.464/2);

\draw (5/2, 5.196/2) -- (2/2, 0);

\draw (3/2,5.196/2) -- (6/2, 0);

\draw (1/2,1.732/2) -- (7/2, 1.732/2);

\draw (3/2,5.196/2) --  (5/2,5.196/2) ;

\draw (2/2, 0)--(1/2,1.732/2);

\draw (6/2, 0)--(7/2, 1.732/2);

\draw (4.5/2, 6.062/2)--(1/2,0);

\draw (3/2,0) -- (5.5/2, 4.33/2);

\draw (5/2,0) -- (6.5/2, 2.598/2);

\draw (7/2,0) -- (7.5/2, 0.866/2);

\draw (1/2,0) -- (0.5/2, 0.866/2);

\draw (3/2,0) -- (1.5/2, 2.598/2);

\draw (5/2,0) -- (2.5/2, 4.33/2);

\draw (3.5/2, 6.062/2)--(7/2,0);

\draw (3.5/2,  6.062/2) -- (4.5/2,  6.062/2); 

\draw (2.5/2, 4.33/2) -- (5.5/2, 4.33/2);

\draw (1.5/2, 2.598/2) --  (6.5/2, 2.598/2);

\draw (0.5/2, 0.866/2) -- (7.5/2, 0.866/2);

\draw (4/2, 3.7527/2) -- (4.25/2, 4.1857/2) -- (3.75/2,  4.1857/2)--  (4/2, 3.7527/2);

\node [below] at (4/2, -0.5/2) {$\overline{A_J}=\cup_\beta A_\beta$};

\draw (8, 1.732) -- (6, 1.732) -- (7, 0) --  (8, 1.732) ; 

\draw (7, 0.5774) -- (7.5, 1.4434) -- (6.5,  1.4434)--  (7, 0.5774);

\node [below] at (7, -0.5/2) {$A'_{\beta}\subset A_\beta$};

\end{tikzpicture}
\caption{The nodal set of $\delta_J(N\cdot)$ on $A_J$}
\label{Abeta}
\end{figure}

Now we deal with $\delta_J(NH)$. 
$\overline{A_J}$ can be thought of as an alcove in the $(r-|J|)$-dimensional affine subspace $\bigcap_{j\in J}\mathfrak{t}_{\alpha_j,\delta_{0j}}$ of $\mathfrak{t}$, cut out by the hyperplanes $\mathfrak{t}_{\alpha,n}$ of $\mathfrak{t}$ where $\alpha\in \Sigma\setminus \Sigma_J$ and $n\in\mathbb{Z}$. 
Now all the hyperplanes 
$$\{H\in\mathfrak{t}:\ \alpha(NH)/2\pi i+n=0\}$$ 
of $\mathfrak{t}$ where $\alpha\in\Sigma\setminus \Sigma_J$ and $n\in\mathbb{Z}$, cut $\overline{A_J}$ further into tiny alcoves $A_\beta$ (all identical to $A_J$ in shape) of scale $\asymp N^{-1}$; see Figure \ref{Abeta}. Shrinking each alcove $A_\beta$ to its center by half of its size (say) into $A'_\beta$, it then holds that 
$$|\delta_J( NH)|\gtrsim 1$$
for $H$ lying in each $A'_\beta$. Thus we have 
\begin{align}\label{shrink}
\left\| \frac{\delta_J( N\cdot)}{\delta_J( \cdot)}\right\|_{L^p(S_N)}
&\gtrsim \left(\sum_{\beta}\int_{A'_\beta\cap S_N} t_{j_1}^{-q_{1,0}p}\cdots t_{j_{k}}^{-q_{k,0}p}\ dt_{j_{1}}\cdots\ dt_{j_k}\right)^{\frac{1}{p}}.
\end{align}
As $t_{j_i}>N^{-1}$ on $S_N$ ($i=1,\ldots,k$) and $A_\beta$ is of scale $\asymp N^{-1}$, the values of each $t_{j_i}$ on every  $A_\beta\cap S_N$ are comparable. Moreover, every pair $A_\beta\cap S_N$ and $A_{\beta}'\cap S_N$ are comparable in size. So we can further estimate 
\begin{align*}
\left\| \frac{\delta_J( N\cdot)}{\delta_J( \cdot)}\right\|_{L^p(S_N)}
&\gtrsim \left(\sum_{\beta}\int_{A_\beta\cap S_N} t_{j_1}^{-q_{1,0}p}\cdots t_{j_{k}}^{-q_{k,0}p}\ dt_{j_{1}}\cdots\ dt_{j_k}\right)^{\frac{1}{p}}  \\
& \ \ \ \ \ \ =\left(\int_{S_N} t_{j_1}^{-q_{1,0}p}\cdots t_{j_{k}}^{-q_{k,0}p}\ dt_{j_{1}}\cdots\ dt_{j_k}\right)^{\frac{1}{p}}\asymp\left\| \frac{1}{\delta_J( \cdot )}\right\|_{L^p(S_N)}.
\end{align*}
Thus just as $\left\| 1/{\delta_J( \cdot )}\right\|_{L^p(S_N)}$, 
$\left\| {\delta_J( N\cdot)}/{\delta_J(\cdot)}\right\|_{L^p(S_N)}$ also saturates the bound on the right side of \eqref{keybound}. With \eqref{CSharp} this implies that $\|\chi_\mu\|_{L^p(S_N)}$ for $\mu=N\rho$ indeed saturates the bound in Theorem \ref{characterrestriction}. 
\end{proof}

\begin{rem}\label{varymu}
Choosing $\mu=N\rho$ in $\chi_\mu$ makes the computation much simpler than otherwise. For general $\mu$, as a consequence of Lemma \ref{lemdecomposition}, we still have a formula for restriction of characters to facets of the alcove: 
For any $H\in A_J$, it holds 
\begin{align*}
\chi_\mu(\exp H)=\frac{e^{\rho_J(H_0)}}{\delta_J( H)}\sum_{s\in W_J\backslash W}\det s\ e^{(s\mu)(H+H_0)}\cdot \frac{\prod_{\alpha\in\Sigma^+_J}(\alpha,s\mu)}{\prod_{\alpha\in\Sigma^+_J}(\alpha,\rho_J)}
\end{align*}
where $H_0$ is the unique vector in $\mathfrak{t}^J$ obeying $t_j(H_0)=0$ for all $j\in J$, and $\rho_J=(\sum_{\alpha\in\Sigma_J^+}\alpha)/2$ is the Weyl vector for $\Sigma_J^+$. We conjecture that for any sequence of $\mu$  such that $|\mu|\to\infty$ and  $|\langle\mu,\alpha\rangle|\gtrsim |\mu|$ for all $\alpha\in\Sigma$, 
$\|\chi_\mu\|_{L^p(A_J)}$ also saturates the bound. This is intimately related to the nodal set of the sum in the above character formula. 
The zeros of characters seem to be an intricate and underexplored subject; see \cite{Gal65, Ser04, Pra16a, Pra16b, Ser23}.
The study of nodal sets of general Laplace--Beltrami eigenfunctions is also an attractive subject; see Chapter 13 of \cite{Zel17} for a recent survey. 
\end{rem}

\section{Proof of Theorem \ref{jointrestriction}}
\label{application}
We refer to Chapter 5 of \cite{Hel00} for basic information about the characters and matrix coefficients of irreducible representations of a compact Lie group. 
Any sum $\psi$ of matrix coefficients of the irreducible representation of $U$ of highest weight $\mu-\rho$ ($\mu\in\Lambda^+$) is of Laplace--Beltrami eigenvalue $-|\mu|^2+|\rho|^2=-N^2$. Moreover, it holds 
$$\psi=\psi* (d_\mu\chi_\mu)$$
where $d_\mu$ denotes the dimension of the irreducible representation, and $*$ denotes convolution on the group $U$. We will use the dimension bound 
\begin{align}\label{dimensionbound}
d_\mu=\frac{\prod_{\alpha\in\Sigma^+}(\alpha,\mu)}{\prod_{\alpha\in\Sigma^+}(\alpha,\rho)}\lesssim N^{\frac{d-r}{2}}.
\end{align} 

Let $S$ be a compact submanifold of a maximal flat in $U$. As the the space of matrix coefficients of an irreducible representation is invariant under left (and right) translations, we may as well assume that $S$ is a compact submanifold of a maximal torus $T$ of $U$. 
To prove Theorem \ref{jointrestriction}, it suffices to derive the desired bound for the norm of the operator $\mathcal{T}:L^2(U)\to L^p(S)$ defined by 
\begin{align*}
(\mathcal{T}f)(x):=(f*(d_\mu \chi_\mu))(x)
=\int_{U} f(u) d_\mu\chi_\mu(u^{-1}x)\ du.
\end{align*}

Let $\mathcal{T}^*:L^{p'}(S)\to L^2(U)$ be the dual of $\mathcal{T}$. A direct computation shows that the operator $\mathcal{T}\mathcal{T}^*: L^{p'}(S)\to L^p(S)$ is given by the formula 
\begin{align*}
\mathcal{T}\mathcal{T}^*g(x)&=\int_S g(y)\mathscr{K}(y,x)\ dy
\end{align*}
where 
\begin{align*}
\mathscr{K}(y,x)&=\int_U d_\mu\chi_\mu(u^{-1}x)\overline{d_\mu\chi_\mu(u^{-1}y)} \ du\\
&=\int_U d_\mu\chi_\mu(u^{-1}x) d_\mu\chi_\mu(y^{-1}u) \ du\\
&=(d_\mu\chi_\mu) * (d_\mu \chi_\mu)(x^{-1}y)\\ 
&=d_\mu \chi_\mu(x^{-1}y).
\end{align*}
Here we have used the conjugation-invariant property of $\chi_\mu$.

Let $p\geq 2$. We have for any $y\in S$
$$\|\mathscr{K}(y,\cdot)\|_{L^{\frac{p}{2}}(S)}\leq d_\mu\|\chi_\mu\|_{L^\frac{p}{2}(S^{-1}y)} $$
and for any $x\in S$ 
$$\|\mathscr{K}(\cdot,x)\|_{L^{\frac{p}{2}}(S)}\leq d_\mu\|\chi_\mu\|_{L^\frac{p}{2}(x^{-1}S)} $$
where $$S^{-1}y=
\{x^{-1}y\in T:\ x\in S\},$$ 
$$x^{-1}S=\{x^{-1}y\in T:\ y\in S\}$$ are respectively translates by elements of $T$ of the compact smooth $k$-dimensional submanifolds $S^{-1}$ and $S$ of $T$. Here we also used the translation invariance of the Riemannian volume forms on submanifolds.

Now we apply Theorem \ref{characterrestriction}. By \eqref{fewer} and \eqref{pk=}, we have 
$$p_k\leq p_r=\frac{2r}{d-r}\leq 1\leq \frac{p}{2};$$ 
and $p_k=p/2$ can only happen when $p=2$ with $U$ being the three sphere and $S$ being (part of) a large circle. By Theorem \ref{characterrestriction} especially the part concerning uniformity for translates of a fixed submanifold, we have 
\begin{align*}
\sup_{y\in S}\|\chi_\mu\|_{L^\frac{p}{2}(S^{-1}y)}\lesssim N^{\frac{d-r}{2}-\frac{2k}{p}}
\end{align*}
and 
\begin{align*}
\sup_{x\in S}\|\chi_\mu\|_{L^\frac{p}{2}(x^{-1}S)}\lesssim N^{\frac{d-r}{2}-\frac{2k}{p}}
\end{align*}
except when $p=2$, $U$ is the three sphere, and $S$ is a large circle, in which case an extra multiplicative factor of $\log N$ should be added to the above bounds on the right side. Using \eqref{dimensionbound}, we then have 
\begin{align}\label{Ky}
\sup_{y\in S}\|\mathscr{K}(y,\cdot)\|_{L^{\frac{p}{2}}(S)}\lesssim N^{d-r-\frac{2k}{p}}
\end{align}
and 
\begin{align}\label{Kx}
\sup_{x\in S}\|\mathscr{K}(\cdot, x)\|_{L^{\frac{p}{2}}(S)}\lesssim N^{d-r-\frac{2k}{p}}
\end{align}
adding the $\log N$ factor for the exceptional case. 

Now we recall Schur's test as in Lemma 1.11.14 of \cite{Tao10}.
\begin{lem}[Schur's test]
Let $\mathscr{K}:X\times Y\to \mathbb{C}$ be a measurable function obeying the bounds 
$$\|\mathscr{K}(x,\cdot)\|_{L^{q_0}(Y)}\leq B_0$$
for almost every $x\in X$, and 
$$\|\mathscr{K}(\cdot,y)\|_{L^{p_1'}(X)}\leq B_1$$
for almost every $y\in Y$, where $1\leq p_1,q_0\leq\infty$ and $B_0,B_1>0$. Then for every $0<\theta<1$, the integral operator 
$$Tf(y):=\int_X\mathscr{K}(x,y)f(x)\ d\mu(x)$$ 
is well-defined for all $f\in L^{p_\theta}(X)$ and almost every $y\in Y$, and furthermore 
$$\|Tf\|_{L^{q_\theta}(Y)}\leq B_0^{1-\theta}B_1^{\theta} \|f\|_{L^{p_\theta}(X)}.$$
Here we adopt the convention that $p_0:=1$ and $q_1:=\infty$, thus $q_\theta=q_0/(1-\theta)$ and $p_\theta'=p_1'/\theta$. 
\end{lem}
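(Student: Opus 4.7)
The plan is to prove Schur's test by Riesz--Thorin interpolation between two endpoint estimates. With the convention $p_0=1$ and $q_1=\infty$, the stated relations $q_\theta=q_0/(1-\theta)$ and $p_\theta'=p_1'/\theta$ coincide precisely with the output of interpolating the two mappings $T:L^1(X)\to L^{q_0}(Y)$ and $T:L^{p_1}(X)\to L^\infty(Y)$. So the strategy reduces to establishing these two endpoints directly.

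For the $\theta=0$ endpoint, apply Minkowski's integral inequality to $Tf(y)=\int_X\mathscr{K}(x,y)f(x)\,d\mu(x)$ to obtain
\[
\|Tf\|_{L^{q_0}(Y)}\leq \int_X |f(x)|\,\|\mathscr{K}(x,\cdot)\|_{L^{q_0}(Y)}\,d\mu(x)\leq B_0\|f\|_{L^1(X)}.
\]
For the $\theta=1$ endpoint, H\"older's inequality applied pointwise in $y$ gives
\[
|Tf(y)|\leq \|\mathscr{K}(\cdot,y)\|_{L^{p_1'}(X)}\,\|f\|_{L^{p_1}(X)}\leq B_1\|f\|_{L^{p_1}(X)}
\]
for almost every $y\in Y$, hence $\|Tf\|_{L^\infty(Y)}\leq B_1\|f\|_{L^{p_1}(X)}$.

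Complex interpolation (Riesz--Thorin) between these two endpoints immediately yields $\|Tf\|_{L^{q_\theta}(Y)}\leq B_0^{1-\theta}B_1^{\theta}\|f\|_{L^{p_\theta}(X)}$ for all $0<\theta<1$. Well-definedness of $Tf(y)$ for almost every $y$ when $f\in L^{p_\theta}(X)$ is a standard density argument: verify absolute convergence for simple functions (where it is immediate from the kernel bounds on the support), then extend using the just-derived operator norm bound.

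There is no substantive obstacle in this argument; both endpoints are one-line applications of Minkowski and H\"older, and the interpolation is a black-box invocation. One could alternatively give a direct proof by writing $|\mathscr{K}(x,y)f(x)|$ as a product of three suitably weighted factors and applying H\"older's inequality with three exponents, but this is Riesz--Thorin by hand and offers no conceptual simplification. The only care needed is to note that degenerate configurations (for instance $p_1=1$ or $q_0=\infty$) collapse the interpolation to one of the endpoints themselves and require no additional work.
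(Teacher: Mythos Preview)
Your argument is correct: the two endpoints follow from Minkowski's integral inequality and H\"older respectively, and Riesz--Thorin delivers the interpolated bound with the stated exponents and constant $B_0^{1-\theta}B_1^{\theta}$. The paper itself does not prove this lemma at all; it simply quotes it verbatim from Lemma~1.11.14 of Tao's \emph{An epsilon of room}, so there is no ``paper's proof'' to compare against. Your proof is in fact the standard one (and essentially the one Tao gives), so there is nothing to add.
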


Using this lemma with $\theta=1/2$ and $q_0=p_1'=p/2$, \eqref{Ky} and \eqref{Kx} together imply 
\begin{align*}
\|\mathcal{T}\mathcal{T}^*\|_{L^{p'}(S)\to L^p(S)}\lesssim N^{d-r-\frac{2k}{p}},
\end{align*}
adding the extra $\log N$ factor for the exceptional case. Taking square root of the above bound gives the desired bound for $\|\mathcal{T}\|_{L^2(U)\to L^p(S)}$ and thus Theorem \ref{jointrestriction}.

\section{Proof of Theorem \ref{gef}}\label{proofofgef}
The Peter--Weyl theorem tells that any $f\in L^2(U)$ is an (infinite) sum of matrix coefficients of irreducible representations of $U$, namely, 
$$f=\sum_{\mu\in\Lambda^+} f_\mu$$
where $f_\mu$ is a sum of matrix coefficients of the irreducible representation of $U$ of highest weight $\mu-\rho$ ($\mu\in\Lambda^+$), and we have the orthogonality condition
$$\|f\|^2_{L^2(U)}=\sum_{\mu\in\Lambda^+} \|f_\mu\|_{L^2(U)}^2.$$ 

Now let $f$ be a Laplace--Beltrami eigenfunction of eigenvalue $-N^2$. Then each nonzero $f_\mu$ appearing in the sum is also a Laplace--Beltrami eigenfunction of eigenvalue $-N^2$, for which we have 
$-|\mu|^2+|\rho|^2=-N^2$. Set 
$$\Lambda_N^+:=\left\{\mu\in\Lambda^+: \ -|\mu|^2+|\rho|^2=-N^2\right\}.$$

Let $S$ be a compact smooth $k$-dimensional submanifold of any maximal flat in $U$, where $k=0,1,\ldots,r$. Let $p\geq 2$. For each $x\in U$, by the Cauchy--Schwarz inequality, we have 
$$|f(x)|=\left|\sum_{\mu\in\Lambda_N^+} f_\mu(x)\right|
\leq |\Lambda_N^+|^{1/2} \|f_\mu(x)\|_{l_\mu^2(\Lambda_N^+)}.$$
By the Minkowski inequality, as $p\geq 2$, we then have  
\begin{align}\label{aftermin}
\|f\|_{L^p(S)}
\leq |\Lambda_N^+|^{1/2}
\left\| \|f_\mu\|_{L^p(S)}\right\|_{l^2_\mu(\Lambda_N^+)}.
\end{align}
A standard estimate of $|\Lambda_N^+|$ is in order. 

\begin{lem}\label{counting}
We have $|\Lambda_N^+|\lesssim N^{r-2}$ for $r\geq 5$, and 
$|\Lambda_N^+|\lesssim_{\varepsilon>0} N^{r-2+\varepsilon}$ for $2\leq r\leq 4$. 
\end{lem}
\begin{proof}
We are counting the number of ways of representing $N^2+|\rho|^2$ by $|\mu|^2$, which is a positive definite quadratic form of rational coefficients in $\mu\in\Lambda^+\subset \Lambda\cong \mathbb{Z}^r$. The estimate is classical; see for example Lemma 23 of \cite{Zha21} for a detailed exposition. 
\end{proof}

By Theorem \ref{jointrestriction}, we have 
$$\|f_\mu\|_{L^p(S)}\lesssim N^{\frac{d-r}{2}-\frac{k}{p}}\|f_\mu\|_{L^2(U)}.$$
Apply this and Lemma \ref{counting} to \eqref{aftermin}, we then have 
\begin{align*}
\|f\|_{L^p(S)}\lesssim_\varepsilon N^{\frac{r-2}{2}+\frac{d-r}{2}-\frac{k}{p}+\varepsilon}\left\| \|f_\mu\|_{L^2(U)} \right\|_{l^2_\mu(\Lambda_N^+)}=N^{\frac{d-2}{2}-\frac{k}{p}+\varepsilon} \|f\|_{L^2(U)},
\end{align*}
with the $\varepsilon$ removable whenever $r\geq 5$. This finishes the proof of Theorem \ref{gef}. 

\begin{rem}\label{rem: sharp?}
    For sharpness of Theorem \ref{gef}, 
    let $\widetilde{\Lambda}_N^+:=\{\mu\in \Lambda_N^+: \langle \mu, \alpha\rangle\gtrsim N,\ \forall \alpha\in\Sigma^+\}$, and consider  
$$f=\sum_{\substack{\mu\in\widetilde{\Lambda}_N^+}}\chi_\mu.$$
Here the regularity condition $\langle \mu, \alpha\rangle\gtrsim N$, $\forall\alpha\in\Sigma^+$ makes sure that when evaluating at the identity $e$ of $U$, it holds $\chi_\mu(e)=d_\mu\asymp\prod_{\alpha\in\Sigma^+}\langle\mu,\alpha\rangle \gtrsim N^{(d-r)/2}$.
By the standard derivative bound for characters,  
    $$\|D_X \chi_\mu\|_{L^\infty(U)} \lesssim d_\mu \|\mu\|\|X\|, \ X\in\mathfrak{u},\ \mu\in\Lambda^+,$$
it follows that 
$|\chi_\mu(u)-d_\mu|\ll d_\mu$, whenever the distance $d(u,e)\ll N^{-1}$ and $\mu\in\Lambda_N^+$.  Now take any $k$-dimensional submanifold $S$ containing the origin $e$ of $U$. The above estimates imply 
$$\frac{\|f\|_{L^p(S)}}{\|f\|_{L^2(U)}}\gtrsim \frac{N^{\frac{d-r}{2}-\frac kp}|\widetilde{\Lambda}_N^+|}{|\widetilde{\Lambda}_N^+|^{\frac12}}=N^{\frac{d-r}{2}-\frac kp}|\widetilde{\Lambda}_N^+|^{\frac12}.$$
For $r\geq 5$, by the equidistribution of lattice points on $r$-dimensional ellipsoids as established by Pommerenke \cite{Pom59}, it holds 
$|\widetilde{\Lambda}_N^+| \gtrsim N^{r-2}$ whenever $\Lambda_N^+\neq\emptyset$, which yields the sharpness of \eqref{eq: gef}. For $r=2,3,4$, the estimate \eqref{eq: gef 234} is essentially sharp up to $N^\varepsilon$-factors, by similar equidistribution results \cite{Pom59, DS90, EH99}. 
\end{rem}

\section{Torus-generated conjugation-invariant  submanifolds}
\label{invariantsub}
Consider the conjugation action of $U$ on each facet $A_J$, $J\subsetneqq\{0,1,\ldots,r\}$. 
Let 
$$\mathfrak{t}_J:=\{H\in\mathfrak{t}:\ t_{j}(H)=0\text{ for all }j\in J\}$$
\footnote{The $\mathfrak{t}^{J\perp}$ defined in \eqref{tJperp} is the linear subspace associated to the affine subspace $\mathfrak{t}_J$ of $\mathfrak{t}$.}and consider the pointwise stabilizer subgroup
$$T_J=\{u\in U: \ \exp \text{Ad}(u) H=\exp H\text{ for all }H\in \mathfrak{t}_J\}$$
of $U$. We have: 

\begin{lem}\label{dimTJ}
$\dim T_J=r+2|\Sigma_J^+|$.
\end{lem}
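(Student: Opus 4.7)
The plan is to identify the Lie algebra of $T_J$ precisely, and then deduce the dimension. Since $T_J$ is a closed subgroup of $U$ (cut out by the closed conditions $u\exp(H)u^{-1}=\exp H$ for $H\in A_J$), it is a Lie subgroup, and it suffices to compute $\dim\text{Lie}(T_J)$.

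First, I would observe that $u\in T_J$ if and only if $u$ lies in the centralizer $Z_U(\exp H)$ for every $H\in A_J$, so
\[
\text{Lie}(T_J)=\bigcap_{H\in A_J}\{X\in\mathfrak{u}:\text{Ad}(\exp H)X=X\}.
\]
Passing to the complexification $\mathfrak{g}=\mathfrak{u}_\C$ with root-space decomposition $\mathfrak{g}=\mathfrak{h}\oplus\bigoplus_{\alpha\in\Sigma}\mathfrak{g}_\alpha$ where $\mathfrak{h}=\mathfrak{t}_\C$, the operator $\text{Ad}(\exp H)$ acts on $\mathfrak{g}_\alpha$ by the scalar $e^{\alpha(H)}$ and trivially on $\mathfrak{h}$. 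Hence
\[
\text{Lie}(T_J)_\C=\mathfrak{h}\oplus\bigoplus_{\alpha\in S}\mathfrak{g}_\alpha,\qquad S=\{\alpha\in\Sigma:\alpha(H)\in 2\pi i\Z\text{ for all }H\in A_J\}.
\]

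The key step is to show $S=\Sigma_J$. Writing any $\alpha\in\Sigma$ as $\alpha=\sum_{j=1}^r c_j\alpha_j$ with $c_j\in\Z$, and using $\alpha_j(H)/2\pi i=t_j(H)$ for $j\ge 1$ with $t_j(H)=0$ on $A_J$ for $j\in J\cap\{1,\dots,r\}$, one gets $\alpha(H)/2\pi i=\sum_{j\in\{1,\dots,r\}\setminus J}c_jt_j(H)$. I would then split into two cases. If $0\notin J$, the parameters $(t_j(H))_{j\in\{1,\dots,r\}\setminus J}$ fill out an open cone, so the linear functional must be identically zero and integer, forcing $c_j=0$ for $j\notin J$, i.e.\ $\alpha\in\text{span}\{\alpha_j:j\in J\}=\Sigma_J$. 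If $0\in J$, the $(t_j(H))_{j\in\{1,\dots,r\}\setminus J}$ range over an open subset of the hyperplane $\sum_{j\notin J}m_jt_j=1$ (where $m_j$ are the marks so that $\alpha_0=-\sum m_j\alpha_j$), forcing $c_j=\lambda m_j$ for $j\notin J$ with a common $\lambda\in\Z$; this is precisely the condition that $\alpha$ lies in $\text{span}\{\alpha_j:j\in J\}$, i.e.\ $\alpha\in\Sigma_J$. Conversely, any $\alpha\in\Sigma_J$ clearly lies in $S$.

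Finally, counting real dimensions: $\mathfrak{h}$ contributes $\dim_\R\mathfrak{t}=r$, and the pairs $\{\mathfrak{g}_\alpha,\mathfrak{g}_{-\alpha}\}$ for $\alpha\in\Sigma_J^+$ each contribute two real dimensions to the compact form, giving $\dim T_J=r+2|\Sigma_J^+|$. The only delicate point is the case $0\in J$, where the constancy of $\alpha(H)/2\pi i$ on $A_J$ must be reconciled with its integrality at a single point; this is where the identity $\alpha_0=-\sum m_j\alpha_j$ does the bookkeeping.
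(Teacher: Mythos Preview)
Your argument is correct and gives a self-contained computation of $\text{Lie}(T_J)$, whereas the paper simply invokes Lemma~5.1 in Ch.~VII of \cite{Hel01} (and then remarks that the group defined here agrees with Helgason's because both are connected with the same Lie algebra). Your root-space analysis makes explicit exactly what that citation encodes: $\text{Ad}(\exp H)$ fixes $\mathfrak{g}_\alpha$ if and only if $\alpha(H)\in 2\pi i\Z$, and intersecting over $H\in A_J$ singles out $\Sigma_J$. The one step you pass over quickly---``$\alpha$ lies in $\text{span}\{\alpha_j:j\in J\}$, i.e.\ $\alpha\in\Sigma_J$''---tacitly uses that $\Sigma\cap\text{span}_{\Z}\{\alpha_j:j\in J\}=\Sigma_J$, which is precisely the fact the paper already records without proof just before the lemma (``if a root hyperplane $\mathfrak{p}_{\alpha,n}$ contains $A_J$, then $\alpha\in\Sigma_J$''); so within the paper's framework your argument closes. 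The trade-off: the paper's citation is shorter and defers the affine-alcove bookkeeping to a standard reference, while your direct computation is more transparent and would survive even if the reader does not have Helgason at hand.
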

\begin{proof}
This is Lemma 5.1 in Ch. VII of \cite{Hel01}. 
\end{proof}

Consider the mapping 
$$\Psi_J: (uT_J,H)\mapsto \exp \text{Ad}(u) H$$
of $(U/T_J)\times A_J$ into $U$. 

\begin{lem}
The smooth mapping $\Psi_J:(U/T_J)\times A_J\to U$ is an immersion. Moreover, let $du$, $d(uT_J)$, and $dH$ denote the volume form on $U$, $U/T_J$, and $A_J$ respectively, all canonically induced from the Riemannian metric on $U$. Then the pullback $\Psi_J^*(du)$ of $du$ by $\Psi_J$ equals $C  |\delta_J( H)|^2\ d(uT_J)\ dH$, where $C$ is a positive constant. 
\end{lem}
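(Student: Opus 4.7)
The plan is to parametrize the differential $d\Psi_J$ at a point $(u_0 T_J, H) \in (U/T_J) \times A_J$ via a natural local chart, then exploit the root-space decomposition of $\mathfrak{u}$ to extract both injectivity and the Jacobian factor. Well-definedness of $\Psi_J$ on the quotient is immediate: for $t \in T_J$, the defining property of $T_J$ gives $(ut)(\exp H)(ut)^{-1} = u(\exp\t{Ad}(t)H)u^{-1} = u(\exp H) u^{-1}$ for every $H \in A_J$. Smoothness is clear.

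I would identify $T_H A_J \cong \mathfrak{t}_J^\perp$ (since $A_J$ is an open piece of an affine translate of $\mathfrak{t}_J^\perp \subset \mathfrak{t}$), $T_{u_0 T_J}(U/T_J) \cong \mathfrak{u}/\t{Lie}(T_J)$, and use left translation to identify $T_x U \cong \mathfrak{u}$ at $x := \Psi_J(u_0 T_J, H)$. Differentiating the curve $s \mapsto (u_0 e^{sX} T_J,\ H + sY)$ at $s=0$ then yields
$$d\Psi_J(u_0 T_J, H)(Y, [X]) = \t{Ad}(u_0)\bigl( Y + (e^{-\t{ad}\,H} - I)X \bigr), \qquad Y \in \mathfrak{t}_J^\perp,\ [X] \in \mathfrak{u}/\t{Lie}(T_J).$$

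Next, use the orthogonal decomposition $\mathfrak{u} = \mathfrak{t} \oplus \bigoplus_{\alpha \in \Sigma^+} V_\alpha$, where each $V_\alpha$ is the real $2$-plane in $\mathfrak{u}$ associated to the pair of root spaces for $\pm\alpha$, on which $\t{ad}(H)$ acts as an infinitesimal rotation of angle $\alpha(H)/i$. Matching dimensions with Lemma \ref{dimTJ}, $\t{Lie}(T_J) = \mathfrak{t} \oplus \bigoplus_{\alpha \in \Sigma_J^+} V_\alpha$, so $\mathfrak{u}/\t{Lie}(T_J) \cong \mathfrak{m} := \bigoplus_{\alpha \in \Sigma^+ \setminus \Sigma_J^+} V_\alpha$. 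On each $V_\alpha$ the operator $e^{-\t{ad}\,H} - I$ has determinant $4 \sin^2(\alpha(H)/2i) = |e^{\alpha(H)/2} - e^{-\alpha(H)/2}|^2$, which for $H \in A_J$ is strictly positive precisely when $\alpha \notin \Sigma_J$ (the key point being that an open facet $A_J$ lies on no root hyperplane beyond those of $\Sigma_J$, i.e. $\{\alpha \in \Sigma : \alpha(H) \in 2\pi i\mathbb{Z}\} = \Sigma_J$ for $H \in A_J$). Therefore $e^{-\t{ad}\,H} - I$ restricts to an isomorphism $\mathfrak{m} \to \mathfrak{m}$, whose image is orthogonal to $\mathfrak{t}_J^\perp \subset \mathfrak{t}$; combined with $\t{Ad}(u_0)$ being an isometry of the Killing form, this gives injectivity of $d\Psi_J$, so $\Psi_J$ is an immersion.

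Finally, the Jacobian determinant of $d\Psi_J$ with respect to orthonormal bases of $\mathfrak{t}_J^\perp \oplus \mathfrak{m}$ (source) and the matching orthogonal decomposition of $\mathfrak{u}$ (target) is the product of $1$ on the $Y$-component and
$$\prod_{\alpha \in \Sigma^+ \setminus \Sigma_J^+} |e^{\alpha(H)/2} - e^{-\alpha(H)/2}|^2 = |\delta_J(H)|^2$$
on the $X$-component, with $\t{Ad}(u_0)$ contributing no further factor. Since $du$, $d(uT_J)$, $dH$ all come from the Killing-form-induced Riemannian metric, reading off this Jacobian gives $\Psi_J^*(du) = C |\delta_J(H)|^2\, d(uT_J)\, dH$ for some positive normalization constant $C$. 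The main technical point, which is a standard fact, is the identification of $\t{Lie}(T_J)$ via the root-space decomposition together with the characterization of roots vanishing (mod $2\pi i\mathbb{Z}$) on the open facet $A_J$; once that is in hand, the rest is a direct linear-algebra computation on the $2$-planes $V_\alpha$.
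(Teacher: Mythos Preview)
Your argument is correct and is precisely the standard computation that the paper defers to by citing Lemma 5.2 (and its proof) in Ch.~VII of \cite{Hel01}; you have simply written out the details of that reference, so the approaches coincide.
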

\begin{proof}
We omit the details as it follows from Lemma 5.2 and its proof in Ch. VII of \cite{Hel01}. 
\end{proof}

Let $U_J$ denote the image of the mapping $\Psi_J:(U/T_J)\times A_J\to U$. Even though $U_J$ may not be strictly a Riemannian manifold by itself as it might have self-intersections, $U_J$ is understood as an immersed Riemannian submanifold of $U$ equipped with a canonical measure still denoted by $du$ which equals the pushforward of $C |\delta_J( H)|^2\ d(uT_J)\ dH$ by $\Psi_J$, so that the following singular analogues of the Weyl integration formula hold
$$\int_{U_J}f(u)\ du=C \int_{(U/T_J)\times A_J} f(\exp \text{Ad}(u) H)|\delta_J( H)|^2\ d(uT_J)\ dH.$$
Specializing $f$ to be conjugation-invariant functions, we record this identity as the following lemma. 

\begin{lem}\label{lem: adapted Weyl formula}
Suppose $f$ is a smooth function on $U$ that is conjugation-invariant. Then 
$$\int_{U_J}f(u)\ du=C\int_{A_J} f(\exp H)|\delta_J( H)|^2\ dH.$$
\end{lem}

More generally, we may replace each facet $A_J$ by any of its submanifold $S$, and consider the conjugation-action of $U$ on $S$. 

\begin{defn}\label{definv}
Let $k=0,1,\ldots,r$. 
We say $Y$ is a torus-generated conjugation-invariant submanifold of $U$ of rank $k$, if there is a smooth $k$-dimensional submanifold $S$ of a facet $A_J$, such that $Y$ equals the image of the smooth mapping 
$$\Psi_S: (uT_J,H)\mapsto \exp \text{Ad}(u) H$$
of $(U/T_J)\times S$ into $U$. 
\end{defn}

Completely analogously, 
we have the following lemmas. 
 
\begin{lem}\label{PsiS}
The smooth mapping $\Psi_S:(U/T_J)\times S\to U$ is an immersion. Moreover, let $du$, $d(uT_J)$, and $dH$ denote the volume form on $U$, $U/T_J$, and $S$ respectively, all canonically induced from the Riemannian metric on $U$. Then the pullback $\Psi_S^*(du)$ of $du$ by $\Psi_S$ equals $C |\delta_J( H)|^2\ d(uT_J)\ dH$ where $C$ is a positive constant. Then as an immersed Riemannian submanifold of $U$, $Y$ is equipped with a canonical measure which equals the pushforward of $C |\delta_J( H)|^2\ d(uT_J)\ dH$ by $\Psi_S$.
\end{lem}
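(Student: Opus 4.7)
The plan is to derive this lemma directly from the preceding one by restriction, exploiting that $S$ is a submanifold of $A_J$ and hence $\Psi_S$ is just $\Psi_J|_{(U/T_J)\times S}$. Both the immersion property and the Jacobian factor $C|\delta_J(H)|^2$ should pass to the restriction without change, since that factor comes entirely from the $U/T_J$ direction and is left untouched when we shrink $A_J$ to $S$.

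Concretely, the immersion property is automatic: at any $(uT_J,H)$, the differential $d\Psi_S$ is the restriction of the injective $d\Psi_J$ to the subspace $T_{uT_J}(U/T_J)\oplus T_HS\subset T_{uT_J}(U/T_J)\oplus T_HA_J$, hence still injective. For the measure formula, my approach is to use the orthogonal block structure of $d\Psi_J$ with respect to the decomposition $T_{uT_J}(U/T_J)\oplus T_HA_J$, resting on two classical facts: (i) $d\Psi_J$ restricted to $T_HA_J\subset\mathfrak{t}$ is an isometry onto its image, because $\exp$ is isometric from $\mathfrak{t}$ (with the Killing-form metric) onto the flat torus and conjugation by $u$ preserves the bi-invariant metric on $U$; and (ii) the image of $T_{uT_J}(U/T_J)$ under $d\Psi_J$ is orthogonal in $T_{\Psi_J(uT_J,H)}U$ to the image of $T_HA_J$, a quick Killing-form calculation using $\mathrm{Ad}(\exp H)|_{\mathfrak{t}}=\mathrm{id}$. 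With (i) and (ii) in hand, the Gram matrix of $d\Psi_J$ in orthonormal frames is block diagonal; the preceding lemma identifies the $(U/T_J)$-block as contributing Jacobian factor $C|\delta_J(H)|^2$, while the $A_J$-block contributes $1$. Replacing $T_HA_J$ by the subspace $T_HS$ leaves the first block unchanged and the second block still isometric, so the Jacobian of $d\Psi_S$ is again $C|\delta_J(H)|^2$, yielding $\Psi_S^*(du)=C|\delta_J(H)|^2\,d(uT_J)\,dH$ with the same constant $C$. The pushforward by $\Psi_S$ then equips $Y$ with its canonical volume measure as an immersed Riemannian submanifold of $U$.

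The main, and really only, obstacle is making (i) and (ii) rigorous at the level of individual tangent vectors, but both are standard structure-theoretic statements that I would cite from Ch.~VII of \cite{Hel01}, exactly as in the proof sketch of the preceding lemma.
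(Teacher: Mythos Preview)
Your proposal is correct and aligns with the paper's own treatment: the paper does not give a separate proof for this lemma but simply states ``Completely analogously, we have the following lemmas,'' referring back to the preceding lemma for $\Psi_J$ and ultimately to Lemma~5.2 of Ch.~VII in \cite{Hel01}. Your restriction argument---carrying the injective differential and the block-diagonal Gram structure of $d\Psi_J$ over to the subspace $T_{uT_J}(U/T_J)\oplus T_HS$---is exactly the content hidden behind that phrase, just spelled out with more care than the paper bothers to.
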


\begin{lem} \label{Weylintegration}
Suppose $f$ is a smooth function on $U$ that is conjugation-invariant. Then 
$$\int_{Y}f(u)\ du=C\int_{S} f(\exp H)|\delta_J( H)|^2\ dH.$$
\end{lem}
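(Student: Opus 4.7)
The plan is to reduce the integral over the invariant submanifold $Y$ to an integral on the parameter space $(U/T_J) \times S$ via the immersion $\Psi_S$, then exploit conjugation-invariance of $f$ to separate variables and absorb the $U/T_J$ volume factor into the constant $C$.

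First, I would invoke Lemma \ref{PsiS}, which tells us that $Y$ is by construction equipped with the pushforward measure $(\Psi_S)_*(C|\delta_J(H)|^2 \, d(uT_J)\, dH)$. Since $\Psi_S$ is a smooth immersion and $Y$ is defined as an immersed submanifold (with integration on immersed submanifolds meaning integration against the pushforward of the parameter-space measure, accounting for self-intersections by multiplicity), we have directly
\begin{equation*}
\int_{Y} f(u)\, du = C \int_{(U/T_J) \times S} f\bigl(\Psi_S(uT_J, H)\bigr) \, |\delta_J(H)|^2 \, d(uT_J)\, dH
\end{equation*}
for any integrable $f$ on $U$, where the constant $C$ is precisely the one appearing in the pullback formula of Lemma \ref{PsiS}.

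Next, I would unfold the definition $\Psi_S(uT_J, H) = \exp \operatorname{Ad}(u) H$ and use the hypothesis that $f$ is conjugation-invariant. Since $\exp \operatorname{Ad}(u) H$ is conjugate in $U$ to $\exp H$, we have $f(\exp \operatorname{Ad}(u) H) = f(\exp H)$, so the integrand depends only on $H \in S$. Applying Fubini then gives
\begin{equation*}
\int_{Y} f(u)\, du = C \left(\int_{U/T_J} d(uT_J)\right) \int_{S} f(\exp H)\, |\delta_J(H)|^2 \, dH.
\end{equation*}
The first factor is simply the total volume of the homogeneous space $U/T_J$, which is a finite positive constant depending only on $J$; absorbing it into $C$ yields the claimed identity. (Technically, one should check that the conjugation-invariance of $f$ is enough to make sense of $f \circ \Psi_S$ as a function on the parameter space independent of the $U/T_J$ factor — this is immediate since $T_J$ was chosen precisely as the pointwise stabilizer of $A_J \supset S$.)

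I do not anticipate serious obstacles, as essentially all the analytic content (the immersion property, the Jacobian computation producing $|\delta_J(H)|^2$, and the construction of the canonical measure on $Y$) has already been packaged into Lemma \ref{PsiS}. The only mild subtlety is the bookkeeping for immersed submanifolds: strictly speaking, one defines $\int_Y f\, du$ via the pushforward, which automatically handles self-intersections by counting them with multiplicity, so no ``multiplicity factor'' needs to be separately tracked beyond what is already implicit in using $(\Psi_S)_*$. Once this point is acknowledged, the proof is essentially a one-line consequence of Lemma \ref{PsiS} together with the conjugation-invariance of $f$.
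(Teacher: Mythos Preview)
Your proposal is correct and matches the paper's approach: the paper does not write out a proof for this lemma at all, simply stating that it follows ``completely analogously'' from the preceding case $S=A_J$, which in turn was obtained by specializing the pushforward-measure identity to conjugation-invariant $f$. Your argument faithfully supplies exactly these omitted details via Lemma~\ref{PsiS}.
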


As a consequence of Lemma \ref{dimTJ} and \ref{PsiS}, we have 
\begin{align}\label{dimY}
\dim Y=k+d-r-2|\Sigma^+_J|=k+2(|\Sigma^+|-|\Sigma^+_J|).
\end{align}

\section{Proof of Theorem \ref{singular} and \ref{singular2}}
\label{invariant}
Recall that for $\mu\in\Lambda^+$, the Laplace--Beltrami eigenvalue of $\chi_\mu$ equals $-|\mu|^2+|\rho|^2=-N^2$. As a torus-generated conjugation-invariant  submanifold of $U$ of rank $k$, by Definition \ref{definv}, $Y$ equals the image of the smooth mapping 
$$\Psi_S: (uT_J,H)\mapsto \exp \text{Ad}(u) H$$
of $(U/T_J)\times S$ into $U$, where
$S$ is a smooth $k$-dimensional submanifold of a facet $A_J$, and $T_J$ is the corresponding pointwise stabilizer subgroup. By Lemma \ref{Weylintegration}, we have 
$$\|\chi_\mu\|^p_{L^p(Y)}=\int_Y|\chi_\mu(u)|^p \ du=C\int_S |\chi_\mu(\exp H)|^p\cdot |\delta_J( H)|^2\ dH
=C\left\|\chi_\mu\cdot |\delta_J|^{\frac{2}{p}}\right\|^{p}_{L^p(S)}.$$
Using the barycentric-semiclassical subdivision again, it suffices to estimate the above integral replacing $S$ by each 
$$S_{K,J'}:=S\cap P_{K,J'}$$
where $J'\subset K\subsetneqq\{0,\ldots,r\}$. As $S\subset A_J$, we may assume that $J\subset J'$. 
Using \eqref{char}, we have
$$|\chi_\mu(\exp H)|\cdot |\delta_J( H)|^{\frac{2}{p}}
\leq\frac{|\delta_J^{J'}( H)|^{\frac{2}{p}}}{|W_{J'}|\cdot |\delta_{J'}( H)|^{1-\frac{2}{p}}}\sum_{s\in W}\left|\chi^{J'}_{(s\mu)^{J'}}(\exp H^{J'})\right|.$$
Note that for $H\in S_{K,J'}\subset P_{J'}$, 
$$|\delta_J^{J'}( H)|=\prod_{\alpha\in\Sigma_{J'}^+\setminus \Sigma_J^+}\left|e^{\frac{\alpha(H)}{2}}-e^{-\frac{\alpha(H)}{2}}\right|\lesssim N^{-|\Sigma_{J'}^+|+|\Sigma_J^+|}.$$
Now Lemma \ref{charbound} gives that for $H\in S_{K,J'}\subset P_{J'}$, 
$$|\chi_\mu(\exp H)|\cdot |\delta_J( H)|^{\frac{2}{p}}\lesssim N^{-\frac{2}{p}(|\Sigma_{J'}^+|-|\Sigma_J^+|)+|\Sigma^+_{J'}|}\cdot\frac{1}{|\delta_{J'}(H)|^{1-\frac{2}{p}}}.$$
Hence 
$$\left\|\chi_\mu\cdot |\delta_J|^{\frac{2}{p}}\right\|_{L^p(S_{K,J'})}
\lesssim N^{-\frac{2}{p}(|\Sigma_{J'}^+|-|\Sigma_J^+|)+|\Sigma^+_{J'}|}\cdot\left\|\frac{1}{\delta_{J'}}\right\|_{L^{p-2}(S_{K,J'})}^{1-\frac{2}{p}}.$$
Then the desired bounds follow from Proposition \ref{keyprop}, noting the dimension formula \eqref{dimY}. Note that the $p=2$ case does not require Proposition \ref{keyprop}. 

Taking orbits of the facets which have been shown in Section \ref{sharpness} to saturate the bounds in Theorem \ref{characterrestriction}, we get the torus-generated conjugation-invariant  submanifolds for which the bounds in Theorem \ref{singular} can be saturated. Namely, for each $k=0,1,\ldots,r$, using Lemma \ref{nk}, for any permutation $\mathcal{P}_0=(j_0,j_1,\ldots,j_r)$ of $\{0,1,\ldots,r\}$ that realizes $q_i(\mathcal{P}_0)=q_{i,0}$ for all $i=0,1,\ldots,r$, set $J:=\{j_{k+1},\ldots,j_{r}\}$. Let $Y$ be the image of the mapping $\Psi_J: (uT_J,H)\mapsto \exp \text{Ad}(u) H$
of $(U/T_J)\times A_J$ into $U$. Using \eqref{CSharp} and the adapted Weyl integration formula as in Lemma \ref{Weylintegration}, an entirely similar computation as in Section \ref{sharpness} shows that $\|\chi_\mu\|_{L^p(Y)}$ saturates the bounds for all $p\geq 2$, where we still let $\mu=N\rho$ where $N$ is a natural number growing to infinity. 

Lastly, we show that the bound of the $p=2$ case can actually be saturated on orbits of any facets. 
For any facet $A_J$ of the alcove $A$, still let $Y$ be the image of the mapping $\Psi_J: (uT_J,H)\mapsto \exp \text{Ad}(u) H$
of $(U/T_J)\times A_J$ into $U$. Still for $\mu=N\rho$ where $N$ is any (large) natural number, using \eqref{CSharp} and Lemma \ref{Weylintegration}, we have 
$$\|\chi_\mu\|_{L^2(Y)}\asymp N^{|\Sigma^+_J|}\|\delta_J( N\cdot)\|_{L^2(A_J)}=N^{\frac{d-r}{2}-\frac{n-k}{2}}\|\delta_J( N\cdot)\|_{L^2(A_J)}.$$
By an argument entirely similar to \eqref{shrink}, we get 
$\|\delta_J( N\cdot)\|_{L^2(A_J)}\gtrsim 1$. Hence 
$\|\chi_\mu\|_{L^2(Y)}\gtrsim N^{(d-r)/2-(n-k)/2}$. 
Thus the bound for the $p=2$ case is indeed sharp for any such submanifold $Y$.

\end{document}